%% file: main.tex
\documentclass[a4paper,12pt]{amsproc}
\usepackage{mathrsfs}
\usepackage{appendix}
\usepackage{amssymb} 
\usepackage{fancyhdr} 
\usepackage{charter}
\usepackage{typearea} 
\usepackage{pdfsync} 
\usepackage{dsfont}
\usepackage{appendix}
\usepackage{color} 

\usepackage{accents} 
\usepackage{hyperref} 
\usepackage{enumitem}
\usepackage{setspace}
\usepackage{tikz}
\usetikzlibrary{cd}
\usepackage{tikz-cd} 
\usepackage{bbm} 
\usepackage{amsmath,amstext,amsthm,amscd} 
\newtheorem{theorem}{Theorem}[section]
\numberwithin{equation}{section}

\newtheorem{lemma}[theorem]{Lemma}
\newtheorem{proposition}[theorem]{Proposition}
\newtheorem*{claim}{Claim}
\newtheorem{corollary}[theorem]{Corollary}
\newtheorem{remark}[theorem]{Remark}

\theoremstyle{definition}
\newtheorem{definition}[theorem]{Definition}

\newtheorem{example}[theorem]{Example}
\numberwithin{equation}{section}

\input{notations.tex}

\makeatletter
\def\l@subsection{\@tocline{2}{0pt}{2.5pc}{5pc}{}}
\def\l@subsubsection{\@tocline{2}{0pt}{5pc}{7.5pc}{}}
\makeatother
\usepackage[margin=2.5cm]{geometry}

\title[Bergman Kernel Asymptotics for Semipositive Line Bundles]
{Bergman Kernel Asymptotics for Semipositive Line Bundles near
Curvature-Degenerate Points}
\author{Yueh-Lin Chiang } 
\thanks{The author is supported by a doctoral grant from the French Ministry
of Higher Education and Research (MESRI). This paper will form part of the
author's PhD thesis.}
\begin{document}
\begin{spacing}{1.5}
\begin{abstract}
 We study the asymptotic behavior of Bergman kernels for high tensor powers of semipositive line bundles over Hermitian manifolds. At points where the curvature degenerates, the classical asymptotic expansion may fail. In this paper, we establish a full local asymptotic expansion and rapid off-diagonal decay near degenerate points at which the metric admits a local decoupled model. More generally, we make the following two spectral hypotheses: a localized mild spectral gap for the Kodaira Laplacian and a spectral gap for the rescaled local model. Under these assumptions, we prove a localization property and the rapid off-diagonal decay for the Bergman kernel. Furthermore, if the metric has a local quasi-homogeneous structure, we obtain a full local asymptotic expansion in the $\Cinf$-topology.

As an application, we study pull-backs of positive line bundles under branched coverings. Near a smooth ramification hypersurface, the resulting asymptotic expansion reflects the branching order. Finally, for certain non-quasi-homogeneous models, we still obtain localization and leading-order asymptotics.
\end{abstract}
\end{spacing}
\begin{spacing}{1}
\maketitle \tableofcontents
\end{spacing}
\setstretch{1.2}

\input{statements.tex}

\bibliographystyle{alpha}
\bibliography{my_reference}
\end{document}

%% file: notations.tex
\newcommand{\Cs}{\mathbb C}
\newcommand{\Q}{\mathbb Q}
\newcommand{\Cn}{\mathbb C^n}
\newcommand{\R}{\mathbb R}
\newcommand{\N}{\mathbb N}

\newcommand{\ii}{\mathrm i}

\newcommand{\db}{\bar\partial}

\newcommand{\la}{\langle}
\newcommand{\ra}{\rangle}
\newcommand{\To}{\rightarrow}

\newcommand{\Lam}{\Lambda}

\newcommand{\Cinf}{\mathscr C^\infty}

\DeclareMathOperator{\Spec}{Spec}
\DeclareMathOperator{\Dom}{Dom}
\DeclareMathOperator{\supp}{supp}
\DeclareMathOperator{\rank}{rank}
\DeclareMathOperator{\Ker}{Ker}

\DeclareMathOperator{\Op}{Op}

\DeclareMathOperator{\Id}{Id}

\newcommand{\vphi}{{\varphi}}

\newcommand{\tphik}{{\widetilde{\varphi}_{k}}}
\newcommand{\tomegak}{{\widetilde{\omega}_{k}}}
\newcommand{\dphik}{(\Delta\varphi_k)}

\newcommand{\phimodk}{{\varphi^{\Lam}_k}}
\newcommand{\phimod}{{\varphi_{\mathrm{mod}}}}

\newcommand{\tBoxk}{\widetilde{\Box}_k}
\newcommand{\Boxmodk}{\Box_{k}^\Lambda}

\newcommand{\Dk}{\mathcal{D}_k}
\newcommand{\tDk}{\widetilde{\mathcal D}_k}

\newcommand{\Dmod}{\mathcal{D}^{\Lam}}

\newcommand{\Hk}{\mathcal{H}_k}
\newcommand{\tHk}{\widetilde{\mathcal{H}}_k}
\newcommand{\Hmodk}{\mathcal{H}_k}

\newcommand{\tBk}{\widetilde{B}_k}
\newcommand{\Bmodk}{B_{k}^{\Lambda}}

\newcommand{\tsBk}{\widetilde{\mathcal{B}}_k}
\newcommand{\sBmodk}{\mathcal{B}_{k}^{\Lambda}}

\newcommand{\Bmod}{{B}_{\mathrm{mod}}}
\newcommand{\sBmod}{\mathcal{B}_{\mathrm{mod}}}
\newcommand{\hBk}{\widehat{\mathcal{B}}_k}

\newcommand{\Rk}{\mathcal{R}_k(\zeta)}
\newcommand{\tsRk}{\widetilde{\mathcal R}_k(\zeta)}

\newcommand{\Rmod}{\mathcal{R}_{\mathrm{mod}}(\zeta)}
\newcommand{\tRk}{\widetilde{R}_k(\zeta)}
\newcommand{\Rlock}{{R}^{\mathrm{loc}}_k(\zeta)}
\newcommand{\Rglobk}{{R}^{\mathrm{glob}}_k(\zeta)}

\newcommand{\tchik}{{\widetilde{\chi}_k}}
\newcommand{\hchik}{{\widehat{\chi}_k}}
\newcommand{\chik}{{{\chi}_k}}

\newcommand{\Sk}{{S^\Lambda_k}}

\newcommand{\F}{\mathscr{F}}
\newcommand{\Cr}{\mathscr{C}^r}

\newcommand{\Qk}{\mathcal{Q}_k}

\newcommand{\even}{\mathrm{even}}
\newcommand{\Ram}{\operatorname{Ram}}
\newcommand{\Ps}{\mathbb{P}}

%% file: statements.tex
\section{Introduction and main results}
The asymptotic behavior of Bergman kernels associated with high powers
of positive Hermitian line bundles has been extensively studied. When
$X$ is a compact complex manifold and $L$ is a holomorphic line bundle with a smooth Hermitian metric $h$ such that $\ii\Theta_{h}(L)>0$, the Bergman kernel $B_k$ of $L^{\otimes k}$ admits
a full asymptotic expansion in the $\mathscr{C}^{\infty}$-topology near
the diagonal, with leading term determined by the Chern curvature. This
theory originates in the work of Tian \cite{Tian1990OnAS} and was developed further by
Catlin \cite{catlin1999bergman}  and Zelditch \cite{zelditch1998szego}; see also Dai--Liu--Ma \cite{dai2004asymptotic}, \cite{dai2006asymptotic} , Ma--Marinescu \cite{ma2007holomorphic}, and Berndtsson- Berman-Sjöstrand \cite{berman2008direct}. Moreover,
Ma and Marinescu proved that the Bergman kernel decays exponentially
away from the diagonal; see \cite{ma2015exponential}.

When the Hermitian line bundle is only semipositive, in the sense that
\[
    \ii\Theta_h(L)\geq 0.
\]
the problem becomes subtler, especially near points where the
curvature degenerates. Cho, Kamimoto and Nose obtained a full pointwise asymptotic expansion of the Bergman function under the assumptions that the principal part of the local weight takes positive values away from the origin, is torus-invariant and quasi-homogeneous, and that global peak sections exist; see \cite{cho2011asymptotics}.  Subsequently, Hsiao and
Marinescu established a full local asymptotic expansion of the Bergman
kernel in the $\mathscr{C}^{\infty}$-topology on the positive-curvature
locus, under adjoint settings $L^{\otimes k}\otimes K_X$ or assuming a localized mild spectral gap condition as in
Definition~\ref{spectral gap}; see \cite{hsiao2014asymptotics}.

More recently, Marinescu and Savale obtained a full
\(\mathscr{C}^{\infty}\)-expansion of Bergman kernels on a compact Riemann surface. They assumed that the curvature
is semipositive and vanishes to finite order at every point; see
\cite{marinescu2024bochner}. Their proof relies on a growing
spectral gap obtained from sub-Riemannian and subelliptic estimates. The
argument used to establish this spectral gap does not extend directly to
higher dimensions. It is worth noting that their finite-type
condition at a given point implies our model gap condition at that point; see
\cite[Appendix A]{marinescu2024bochner} and Definition \ref{def:model-gap}.

Beyond these asymptotic results, there has also been growing interest in
extending classical applications of Bergman kernels and Bergman-space
quantization beyond the positive setting to semipositive, big, and
pseudoeffective line bundles. These developments include Fubini--Study
approximation and various forms of geometric, Berezin--Toeplitz, and
Monge--Amp\`ere quantization
\cite{marinescu2024geometric,darvas2024volume,finski2025geometric,hou2025quantization},
as well as the asymptotic distribution of zeros of random holomorphic
sections
\cite{coman2017equidistribution,bayraktar2024zeros,liu2025semipositive}.

In this paper, we first adopt the model gap condition as a local hypothesis near a fixed point in arbitrary complex dimension. Under a localized mild spectral gap assumption, we prove a localization result for the Bergman kernel, together with rapid off-diagonal decay whenever one of the kernel variables lies near this point. Moreover, when the local model is quasihomogeneous, we obtain a full local on-diagonal asymptotic expansion in the $\mathscr{C}^{\infty}$-topology. We then apply these general results to the case where the point is of finite type and admits a decoupled model; see Definition \ref{DefDD}.

\subsection{Setting and main results}
Let $(X,\omega)$ be a Hermitian manifold of complex dimension $n$. Let $L$ be a holomorphic
line bundle over $X$, and $h$ be a smooth Hermitian metric on $L$ with semi-positive Chern curvature form,
\[
\ii\Theta_h(L)\geq 0 ,
\quad \text{everywhere on }\, X.
\]
Let $L^k:=L^{\otimes k}$ and $h^k:=h^{\otimes k}$, and let
\[
\db_k:\Cinf(X,L^k)\to\Cinf(X, \Lambda^{0,1}T^*X\otimes L^k)
\]
be the $L^k$-valued $\db$-operator in degree $(0,0)$. Let $\Box_k=\Box_{h^k,\omega}:=\db_k^*\db_k$ be the Kodaira
Laplacian on sections of $L^k$ (or $L^k$-valued $\db$-Laplacian on $(0,0)$-forms). Let 
\[
B_k:=B_{h^k,\omega}:L^2_{h^k,\omega}(X,L^k)\to \Ker\db_k\cap L^2_{h^k,\omega}(X,L^k)
\]
be the Bergman projection, that is, the orthogonal projection onto $L^2$ holomorphic sections (cf. \eqref{set:Box}).
We write $B_k(x,y)$ (also denoted by  $B_{h^k,\omega}(x,y)$) for its Schwartz kernel (cf.\eqref{Bh}). 

We will also consider the adjoint setting, namely the sequence of line bundles $L^k\otimes K_X$, where $K_X$ denotes the canonical bundle of $X$. We equip $K_X$ with the Hermitian metric induced by $\omega$, and use the same notation $\bar\partial_k,\Box_k$, and $B_k$ for the corresponding $\bar\partial$-operator, Kodaira Laplacian, and Bergman projection for $L^k\otimes K_X$, respectively. When no confusion can arise, we again denote their Schwartz kernels by $B_k(x,y)$.

Fix $x_0\in X$. Take local holomorphic coordinates $z=(z_1,\dots,z_n)$ centred at $x_0$ on an
open set $D\ni x_0$, and a local holomorphic frame $e_L$ of $L$ on $D$. Write
$|e_L|_h^2=:e^{-2\varphi}$. The local weight $\varphi$ is smooth and plurisubharmonic on $D$. We write the Taylor expansion as
\begin{equation*}
  \varphi(z)\sim\sum_{\alpha,\beta\in\N_0^n}a_{\alpha,\beta}z^\alpha\bar z^\beta .  
\end{equation*}By a suitable change of the local holomorphic frame, we may eliminate the pure pluriharmonic terms $\{\operatorname{Re}z^\alpha\}_{\alpha\in\mathbb N_0^n}$ occurring in the finite jet relevant to our analysis.
 We locally write $\omega= \sum_{i,j=1}^n\omega_{i,\bar j}(z)\cdot \ii dz_i\wedge d\bar z_j$ in $D$ and denote $\omega_0:=\sum_{i,j=1}^n\omega_{i,j}(0)\ii dz_i\wedge d\bar z_j$, which we regard as a constant Hermitian $(1,1)$-form on $\mathbb C^n$.
\begin{definition}\label{DefDD}
We say that $x_0$ is of \textbf{finite-type} and admits a \textbf{decoupled model} with respect to $h$ if there exist local holomorphic coordinates and a local holomorphic frame $e_L$ of $L$ as above such that 
\begin{equation}\label{de}
  \varphi(z)= \sum_{j=1}^n P_j(z_j)+R(z), 
\end{equation}where $P_j=P_j(z_j,\bar z_j)\not\equiv 0$ is a real-valued homogeneous polynomial of degree $2m_j\in 2\mathbb N$, and the Taylor expansion of $R(z)$ contains only monomials $a_{\alpha,\beta}z^\alpha\bar z^\beta$ of weighted degree strictly larger than one.  This means that  $\sum\frac{\alpha_j+\beta_j}{2m_j}>1$.     
\end{definition}

\begin{theorem}\label{Thm_de}Let $(X,\omega)$ be a complete Kähler manifold, $L$ be a holomorphic line bundle with smooth Hermitian metric $h$ with $\ii\Theta_{h}(L)\geq 0$.
Assume $x_0$ is of finite-type and admits a local decoupled model with the expression \eqref{de} in the coordinate $z=(z_1,\cdots,z_n):D\to \Cs^n$ and a frame $e_L$ of $L$. Define \[S_k(z):=(k^{-\frac{1}{2m_1}}z_1,\cdots,k^{-\frac{1}{2m_n}}z_n)\] to be the anisotropic scaling map. The Bergman kernel $B_k(x,y)$ for  $L^k\otimes K_X$ satisfies the following:  

\noindent\textbf{(a)Asymptotic expansion}
There exists a sequence $0<d_1<d_2<\cdots\nearrow\infty$ and a set of kernels $\Phi_{d_j}\in\Cinf(\Cs^n\times\Cs^n)$, $j\geq1$ such that 
\begin{equation*}
B_k(S_k(z),S_k(w))\sim k^{\sum\frac{1}{m_j}}\Big(B_0(z,w) +\sum_{j=1}^{\infty}k^{-d_j}\Phi_{d_j}(z,w)\Big),
\end{equation*}
locally uniformly in the $\Cinf_\text{loc}$-topology on $\Cs^n$. Here, $B_0(z,w)$ denotes the Bergman kernel of the weighted Fock space on $\mathbb C^n$ associated with the model weight $\sum_{j=1}^nP_j(z_j)$ and the constant Hermitian form $\omega_0$. Moreover, $B_0(0,0)>0$.

\noindent\textbf{(b) Off-diagonal decay.} For $\delta>0$, let $D_{k}:=\{x\in D;|z_j(x)|<k^{-(2m_j)^{-1}},j=1,\cdots,n\}$ and  $D_{k,\delta}:=\{x\in D;|z_j(x)|<k^{-(2m_j)^{-1}+\delta},j=1,\cdots,n\}$ . For any compact set $K\Subset X$, and $r\in\N$, we have 
\begin{equation*}
|B_k(x,y)|_{\mathscr{C}^r}=O(k^{-\infty}),\quad \text{uniformly for }\, x\in D_k,\,\, y\in K\setminus D_{k,\delta}.
\end{equation*} 
The same bound holds with $x$ and $y$ interchanged. Here, we denote by  $|\cdot|_{\Cr}$ the pointwise $\Cr$-norm.
   \end{theorem}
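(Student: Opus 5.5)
Theorem~\ref{Thm_de} should follow from the general results announced in the introduction: localization and rapid off-diagonal decay under a \emph{localized mild spectral gap} (Definition~\ref{spectral gap}) plus a \emph{model gap condition} (Definition~\ref{def:model-gap}), and a full expansion when the local model is quasi-homogeneous. So the plan is to check the two spectral hypotheses and the quasi-homogeneous structure for the decoupled model \eqref{de}, and then invoke the general theorems.

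\emph{Step 1 (mild spectral gap).} In the adjoint setting $L^k\otimes K_X$ on a complete Kähler manifold with $\ii\Theta_h(L)\ge0$, I would use Hörmander's $L^2$ estimates with the Bochner--Kodaira--Nakano formula. Since the curvature term is nonnegative on $(n,1)$-forms, $\bar\partial$ can be solved with $L^2$ control, which gives the required bound on $(\Box_k)^{-1}$ away from $\Ker\Box_k$. To localize, I would twist the weight by a small strictly psh bump supported near $x_0$ (or use the positivity of $\ii\partial\db$ of the model on the annulus). This yields a gap of size at least $k^{-N}$ for test sections supported in shrinking neighbourhoods, which is the "mild" gap needed. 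This parallels the Hsiao--Marinescu adjoint argument.

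\emph{Step 2 (model gap and quasi-homogeneity).} The rescaling $S_k$ turns $k\varphi(S_k z)$ into
\[
\sum_j P_j(z_j)+k\,R(S_k z),
\]
and each monomial of $R$ contributes a factor $k^{1-\sum(\alpha_j+\beta_j)/(2m_j)}$. Because the Taylor exponents form a discrete set with weighted degree $>1$, these powers form an increasing sequence $d_j\nearrow\infty$. The model weight $\phimod=\sum_j P_j(z_j)$ is quasi-homogeneous with weights $1/(2m_j)$. Its Kodaira Laplacian with the constant metric $\omega_0$ is unitarily equivalent to a direct sum of one-variable operators only when $\omega_0$ is diagonal. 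In general I would first make a linear change of coordinates that preserves the decoupled structure, or else treat the $\omega_0$ cross terms as a bounded perturbation. The spectral gap for each one-variable model $\db^*_{P_j}\db_{P_j}$ on $\Cs$ comes from psh-ness and $P_j\not\equiv0$: $\Delta P_j\ge0$ is homogeneous and not identically zero, hence positive on a sector, and the Marinescu--Savale argument \cite[Appendix A]{marinescu2024bochner} then gives a gap in one dimension. Tensoring gives the gap for the product model. $B_0(0,0)>0$ holds because the constant section $1$ lies in the weighted Fock space: $e^{-2P_j}$ is integrable since $P_j$ grows like $|z_j|^{2m_j}$ on average. I expect this to need care, since $P_j$ may vanish along rays; integrability along those rays should follow from subharmonicity plus homogeneity.

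\emph{Step 3 (conclusion).} With both hypotheses verified, the general localization theorem gives (b). For (a), I would expand the rescaled Laplacian as $\Box^{\Lambda}+\sum k^{-d_j}\mathcal{P}_j$ with polynomial-coefficient operators, the metric $\omega$ contributing in the same way through $S_k$. A resolvent expansion through the Helffer--Sjöstrand or contour formula $B=\frac{1}{2\pi\ii}\oint(\zeta-\Box)^{-1}d\zeta$ around $0$ then gives the terms $\Phi_{d_j}$. Smoothness and $\Cinf_{\rm loc}$ convergence follow from weighted Sobolev estimates that are uniform in $k$.

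The main obstacle is Step 2's model gap for a merely psh homogeneous $P_j$ whose Laplacian may vanish on rays, together with handling a non-diagonal $\omega_0$. I would first attempt the one-dimensional gap via the subelliptic estimate from finite-order vanishing of $\Delta P_j$.
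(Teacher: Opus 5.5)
Your overall plan matches the paper: verify the localized mild spectral gap and the quasi-homogeneous model gap for $\Lambda=(\frac1{2m_1},\dots,\frac1{2m_n})$, then invoke the general theorems. However, two of your verifications have genuine gaps.

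\textbf{Localized mild spectral gap.} Semipositivity alone does not give the $L^2$ bound you need. In Demailly's estimate (Theorem~\ref{Dema}) the right-hand side is $\int_X|\eta|^2_{\Theta}\,dV_\omega$, and $|\eta|_\Theta$ involves the inverse of the curvature. Take already $\varphi=\sum_jP_j$ with some $m_j\geq2$. The curvature of $h^k$ in the $z_j$-direction is $2k\,\Delta P_j(z_j)$, and $1/\Delta P_j$ is not locally integrable at $z_j=0$. So $\int|\eta|^2_\Theta=\infty$ whenever the $d\bar z_j$-component of $\eta$ is nonzero at $x_0$.

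Strict positivity near $x_0$ must therefore be manufactured, but the device you propose does not exist. A smooth compactly supported psh function vanishes identically, since on each complex line it is subharmonic and bounded above, hence constant. So any compactly supported twist has $\ii\partial\db$ negative somewhere. That negative part must be absorbed by $\ii\Theta_h(L)$, which may have null directions on the support: for instance $\partial/\partial z_j$ is null along $\{z_j=0\}$ when $m_j\geq2$. This set meets every annulus around $x_0$, so ``positivity of the model on the annulus'' fails too.

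The missing idea is Lemma~\ref{lem_an}:
\begin{enumerate}
\item By the second part of Lemma~\ref{lem_p} (with the cut-off of Lemma~\ref{lem_new} along the zero lines of $\Delta P_j$), there are one-variable $g_j\in\Cinf_c(\Cs)$ with $\Delta(P_j+g_j)\geq0$ everywhere and $>0$ at $0$. In other words, the negative part of $\Delta g_j$ is placed where $\Delta P_j$ is positive.
\item Rescaling to $\sum_jN^{-2m_j}g_j(Nz_j)$ with $N$ large gives a semipositive $\hat h$, comparable to $h$, with $\ii\Theta_{\hat h}(L)>0$ near $x_0$.
\item Apply Demailly's theorem to $h_k:=\hat h\otimes h^{k-1}$, whose curvature dominates $\ii\Theta_{\hat h}(L)$ uniformly in $k$. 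Together with the comparability of $h_k$ and $h^k$, this yields $\|(\Id-B_k)u\|\leq C\|\Box_ku\|$ on a fixed neighbourhood of $x_0$.
\end{enumerate}

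\textbf{Positivity of $B_0(0,0)$.} The constant $1$ need not lie in the Fock space, and subharmonicity plus homogeneity does not help. Take $n=1$ and $P(z)=|z|^4+|z|^2\operatorname{Re}(z^2)=2r^4\cos^2\theta$. It has no pure harmonic terms and $\Delta P=r^2(4+3\cos2\theta)>0$ for $z\neq0$. Yet $\int_0^{2\pi}e^{-4r^4\cos^2\theta}\,d\theta\asymp r^{-2}$, so $\int_{\Cs}e^{-2P}\,dA$ diverges logarithmically. With coefficient $\frac43$ instead of $1$, one still has $\Delta P\geq0$, but $P<0$ on a sector.

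What you need is some holomorphic $h$ in the space with $h(0)=1$; for the first example, $e^{z^2}$ works. The paper obtains it in general (Lemma~\ref{lemo}) from H\"ormander's estimate for the weight $\phimod+F+\rho(|z|^2/R^2)\log|z|^2$. Here $F=\sum_jf_j$ is the bounded correction from the first part of Lemma~\ref{lem_p}, making $\ii\partial\db(\phimod+F)\geq c$. The logarithmic pole at $0$, with a large enough coefficient, forces the solution of $\db v'=\db v$ to vanish at $0$.

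\textbf{Model gap and the expansion.} Your model-gap route (the one-dimensional gap from the Marinescu--Savale appendix, then tensoring) is a legitimate alternative, which the paper itself mentions. The non-diagonal $\omega_0$ is harmless. On functions, changing the constant Hermitian form multiplies the $L^2$ inner product by a constant and changes $\|\db u\|$ by a bounded factor, so $\|u\|^2\leq C\|\db u\|^2$ for $u\perp\Ker\db$ transfers. The paper instead uses the same bounded $F$ and H\"ormander directly (Theorem~\ref{prop_decouple}). So the step you flagged as the main obstacle is actually the easy one, and the one-variable device of Lemma~\ref{lem_p} drives all three verifications.

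Finally, you need not re-derive the expansion. Since $\phimod=\sum_jP_j$ is $\Lambda$-quasi-homogeneous with $2|\Lambda|=\sum_j1/m_j$, (a) and (b) are exactly Theorems~\ref{Thm_Full expansion} and~\ref{Thm_Loc_Off}. A direct resolvent expansion on $\Cs^n$ would also have to handle the polynomially growing coefficients of the $\mathcal P_j$. The paper does this by cutting off at $|z|\lesssim k^{\epsilon_0}$ and working in a polynomially weighted symbol class.
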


For a more precise description of the asymptotic expansion, the exponents \(d_j\), and the remainder estimates, we refer the reader to Theorem~\ref{Thm_Full expansion}. Motivated by the concrete case above, we develop a more general framework based on some spectral hypotheses. We recall the following definition from \cite{hsiao2014asymptotics}. 
\begin{definition}\label{spectral gap}
Let $D\subset X$ be open. We say that $\{\Box_k\}_{k\in\N}$ have a \textbf{localized mild spectral gap} on $D$ if there exist constants $C_D>0$ and $k_0,N_0\in\N$ such that
\[
\|(\Id-B_k)u\|_{h^k,\omega}
\leq
C_D k^{N_0}\|\Box_k u\|_{h^k,\omega},
\]
for all $k\geq k_0$ and all $u\in C_c^\infty(D,L^k)$.
In the adjoint setting, the same definition is used with
$L^k$ replaced by $L^k\otimes K_X$.
\end{definition}

\begin{remark}\label{gs}
As observed in \cite{hsiao2014asymptotics}, when $X$ is complete and $D=X$, $\Box_k$ is essentially self-adjoint and $\Cinf_c(D,L^k)$ is dense in $\Dom(\Box_k)$ with respect to the graph norm. Hence, the localized mild spectral gap is equivalent to $\Spec\Box_k\cap\bigl(0,C_D^{-1}k^{-N_0}\bigr)=\varnothing$.

\end{remark}

\begin{remark}\label{sgp}[Sufficient conditions for the localized mild spectral gap]
Several useful sufficient conditions are known.

\begin{enumerate}
    \item Suppose that $X$ is compact, $L$ is ample, and $h$ is a smooth
    Hermitian metric with semipositive curvature and strictly positive at a point. By
    Donnelly's result; see ~\cite{donnelly2003spectral}, there exist $c>0$ and $k_0\in\N$ such that $\Spec\Box_k\cap(0,c)=\varnothing$,
    $k\geq k_0$.
    \item In Section \ref{sec_de}, we will show that every finite-type point admitting a decoupled model satisfies the localized mild spectral gap condition in a neighborhood of the point, in the adjoint setting $L^k\otimes K_X$; see Lemma \ref{lem_de_l}.
     {\item Suppose that $X$ is compact and $(L,h)$ is smooth and
    semipositive. Assume that $L$ admits a possibly singular Hermitian
    metric $h_{\mathrm{sing}}$, smooth outside a proper analytic subset
    $\Sigma\subset X$, such that $\ii\Theta_{h_{\mathrm{sing}}}(L)\geq\varepsilon\omega$ in the sense of currents for some $\varepsilon>0$. Then the localized
    mild spectral gap holds near every point of $X\setminus\Sigma$; see
    the $L^2$ argument in \cite[\S 7]{hsiao2014asymptotics}}.
\end{enumerate}
\end{remark}

Let $\Lambda:=(\lambda_1,\dots,\lambda_n)$ be positive rational numbers with
$\lambda_1\leq\cdots\leq\lambda_n$. We call $\Lambda$ a \emph{weight vector} if
\begin{equation*}
\sum_{j=1}^n\lambda_jn_j=1\qquad\text{for some }(n_j)_{j=1}^n\in\N_0^n .
\end{equation*}
The \emph{size} of $\Lambda$ is $|\Lambda|:=\sum_{j=1}^n\lambda_j$. For $\alpha,\beta\in\N_0^n$,
the \emph{$\Lambda$-weighted degree} of $(\alpha,\beta)$ is
\[
|(\alpha,\beta)|_\Lambda:=\sum_{j=1}^n\lambda_j(\alpha_j+\beta_j).
\]
Recall that the local weight of $h^k$ is $k\varphi$. We rescale the coordinates by $\Lambda$ and get
\[
k\,\varphi\big(k^{-\lambda_1}z_1,\dots,k^{-\lambda_n}z_n\big)
\sim\sum_{\alpha,\beta}k^{1-|(\alpha,\beta)|_\Lambda}a_{\alpha,\beta}z^\alpha\bar z^\beta .
\]
Here the terms of weighted degree $<1$ grow with $k$, those of degree $1$ stay constant, and those of
degree $>1$ tend to zero.

Let $\psi$ be a real polynomial on $\Cs^n$. We write $\Box_\psi:=\db_\psi^*\db$ for the Kodaira
Laplacian on functions (cf.\ \eqref{set:Box}). Here $\db_\psi^*$ is the formal adjoint of $\db$
with respect to the standard Hermitian metric on $\Cs^n$ and the inner product given by $e^{-2\psi}dV$, where
$dV$ is the Lebesgue measure. 

\begin{definition}[Model gap hypothesis]\label{def:model-gap}
Let $\Lambda$ be a weight vector.
\begin{enumerate}
\item[(i)] We say that $x_0$ satisfies the \textbf{model gap condition} with weight vector
$\Lambda$ if the truncated rescaled weights
\begin{equation*}
\phimodk(z):=\sum_{|(\alpha,\beta)|_\Lambda\leq 1}k^{1-|(\alpha,\beta)|_\Lambda}a_{\alpha,\beta}z^\alpha\bar z^\beta
\end{equation*}
give the Kodaira Laplacians $\Box_{\phimodk}$ on $\Cs^n$ such that there exists $k_0\in\N$ and $c>0$ satisfying 
\[
\Spec\Box_{\phimodk}\cap(0,c)=\varnothing,\quad\text{for }k\geq k_0.
\]
The realizing coordinates are called \textbf{model coordinates}.
\item[(ii)] More strongly, we say that $x_0$ satisfies the \textbf{quasi-homogeneous model gap condition} with
weight vector $\Lambda$ if $a_{\alpha,\beta}=0$ whenever $|(\alpha,\beta)|_\Lambda<1$, and
\begin{equation*}
\phimod(z):=\sum_{|(\alpha,\beta)|_\Lambda=1}a_{\alpha,\beta}z^\alpha\bar z^\beta
\end{equation*}
gives the Kodaira Laplacian $\Box_{\phimod}$ on $\Cs^n$ with
\[
\Spec\Box_{\phimod}\cap (0,c)=\varnothing, \quad\text{for some }c>0.
\]
\end{enumerate}\end{definition}
\begin{remark}\label{rem:model-gap-examples}
In Section~\ref{sec_de}, we will show that the model gap condition holds in arbitrary dimension for every point which is of finite type and admits a decoupled model. The model gap condition also holds
for certain torus-invariant models considered in
\cite{cho2011asymptotics}; see Proposition~\ref{prop_goodmodel}.
\end{remark}
For model coordinate $z=(z_1,\dots,z_n):D\to\Cs^n$ with weight vector
$\Lambda=(\lambda_1,\dots,\lambda_n)$, define the scaling map
\begin{equation*}
\Sk(z):=\big(k^{-\lambda_1}z_1,\dots,k^{-\lambda_n}z_n\big),
\end{equation*}
and the shrinking domain
\begin{equation*}
D_k:=\Sk(D)\subset X.
\end{equation*}
After shrinking $D$, we may assume that it is a polydisc and that
$S_k(D)\subset D$ for all $k\geq1$. 

\begin{theorem}[Localization and off-diagonal decay]\label{Thm_Loc_Off}
Assume that $\Box_k$ has a localized mild spectral gap on an open set $D\ni x_0$, and that $x_0$
satisfies the model gap condition with weight vector $\Lambda=(\lambda_1,\dots,\lambda_n)$ in the
local coordinates $z=(z_1,\dots,z_n):D\to\Cs^n$. Then, the Bergman kernels for $L^k$ (or $L^k\otimes K_X$) has the following asymptotic results.

\smallskip
\noindent\textbf{(a) Localization.} For every
$N,r\in\N$ and every compact set $E\Subset\Cs^n$ there is $C=C(N,r,E)>0$ with
\begin{equation*}
\Big|k^{-2|\Lambda|}B_k\big(\Sk(z),\Sk(w)\big)-\tBk(z,w)\Big|_{\mathscr{C}^r}\leq C\,k^{-N}
\end{equation*}
for all $(z,w)\in E\times E$. Here, \(\tBk(z,w)\in\Cinf(\Cs^n\times\Cs^n)\) is the Bergman kernel on \(\Cs^n\), which depends only on the germs of \(h\) and \(\omega\) at \(x_0\); it is defined in \eqref{tBk}. 
\smallskip

\noindent\textbf{(b) Off-diagonal decay.} Let $K\Subset X$ be compact and let $\delta>0$. For
all $N,r\in\N$ there is $C=C(N,r,K,\delta)>0$ such that
\begin{equation}\label{off}
|B_k(x,y)|_{\mathscr{C}^r}\leq C\,k^{-N}
\end{equation}
for all $x\in D_k$ and all $y\in K\setminus D_{k,\delta}$, where $D_{k,\delta}:=\{y\in D ;
|z_j(y)|< k^{-\lambda_j+\delta}\}$. The same bound holds with $x$ and $y$ interchanged.
\end{theorem}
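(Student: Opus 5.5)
We will adapt the Hsiao--Marinescu strategy \cite{hsiao2014asymptotics}: compare the true Bergman projection with the Bergman projection of a rescaled weight on all of $\Cs^n$ by means of the spectral gaps on both sides. Concretely, we pull back $\Box_k$ by $\Sk$ and multiply by $k^{-1}$. In the frame $e_L^k$, twisted by $e^{k\varphi}$ (or with the $K_X$ factor trivialized), this gives an operator $\tBoxk$ on $\Sk^{-1}(D)\subset\Cs^n$. It is a Kodaira Laplacian for the weight $\tphik(z):=k\varphi(\Sk(z))$ and for the rescaled metric $\tomegak$, whose coefficients converge in $\Cinf_{\mathrm{loc}}$ to $\omega_0$. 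We normalize by the Jacobian factor $k^{-2|\Lambda|}$ together with the anisotropic rescaling of the metric. The weight splits as $\tphik=\phimodk+r_k$, where $r_k=O(k^{-\epsilon})$ in $\Cinf$ on $|z|\lesssim k^{\epsilon'}$, because every remaining monomial has weighted degree $>1$. We then extend $\tphik,\tomegak$ to all of $\Cs^n$ by cutting off: we set them equal to $\phimodk,\omega_0$ outside a large ball $|z_j|\le k^{\lambda_j-\delta'}$. This gives a global operator on $\Cs^n$, and $\tBk$ is defined as its Bergman projection. By construction $\tBk$ depends only on the germs of $h$ and $\omega$.

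\textbf{Step 1 (gap for the extended model).} The model gap condition gives $\Spec\Box_{\phimodk}\cap(0,c)=\varnothing$, uniformly in $k$. We will show that the glued operator $\tBoxk$ still has a uniform gap $(0,c/2)$. The error in the quadratic form is $O(k^{-\epsilon})$ relative to the form itself, since the first- and second-order coefficients differ by $O(k^{-\epsilon})$. A standard perturbation argument then applies: the spectral projectors onto $[0,c/4]$ are close, and $\Ker$ controls the rest. The point needing care is that $r_k$ must remain small on the whole gluing region. We achieve this by choosing $\delta'$ small relative to the gap between $1$ and the next weighted degree. We may also need to keep the model weight's plurisubharmonicity, which the cutoff preserves up to $O(k^{-\epsilon})$. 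This step is the main obstacle.

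\textbf{Step 2 (localization).} Let $\chi$ be a cutoff equal to $1$ on a neighbourhood of $E$, and let $u=\chi\,\tBk v$. Then $\tBoxk u=[\tBoxk,\chi]\tBk v$ has support in a fixed compact set. Transporting back to $X$ and using the localized mild spectral gap gives $\|(\Id-B_k)u\|\le Ck^{N_0}\|\Box_ku\|$. The key improvement comes from the off-diagonal decay of $\tBk$, which follows from the uniform model gap via Agmon/Davies--Gaffney estimates applied to $\tBoxk$ with weights $e^{\epsilon|z|}$ or $e^{\epsilon d}$. This decay makes $[\tBoxk,\chi]\tBk$ small only once $\chi$ is supported at distance $\sim k^{\delta}$. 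We therefore take cutoffs at scale $|z_j|\lesssim k^{\delta}$, where the commutator is $O(k^{-\infty})$. Hence $B_k\circ\chi\tBk\chi\approx\chi\tBk\chi$. Symmetrically, $\tBk\,\chi B_k\chi\approx\chi B_k\chi$ by the uniform gap of $\tBoxk$ applied to $\chi B_k$, since $\Box_kB_k=0$. Taking adjoints and combining the two identities gives $\chi(B_k-\tBk)\chi=O(k^{-\infty})$ in $L^2$.

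\textbf{Step 3 (upgrading and off-diagonal decay).} The $L^2$ bounds are upgraded to $\Cr$ bounds on $E\times E$ by uniform elliptic estimates for $\tBoxk$ and Sobolev embedding, in the style of Kohn. Both kernels are holomorphic up to the weights, so Cauchy-type interior estimates apply uniformly in $k$. This proves (a). For (b), let $x\in D_k$, that is, $\Sk^{-1}x$ lies in the unit polydisc, and let $y\notin D_{k,\delta}$, so that $\Sk^{-1}y$ has some coordinate $\ge k^{\delta}$. Take $\chi_1$ supported near $x$ and $\chi_2$ supported near $y$. We write $\chi_1 B_k\chi_2$ using the localization identity: $B_k\chi_1\approx\tBk\chi_1$ up to $O(k^{-\infty})$ on the support of the cutoff at scale $k^{\delta}$. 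The rescaled off-diagonal decay of $\tBk$, at Agmon distance $\gtrsim k^{\delta}$, then gives $O(k^{-\infty})$. For $y$ outside $D$ we instead use $B_k=B_k\tilde\chi$ together with the mild spectral gap, and bound $\|(\Id-B_k)\|$ of a section supported near $x$ against its Laplacian, which vanishes away from the cutoff. The $\Cr$ bounds again come from elliptic regularity, and the roles of $x$ and $y$ are exchanged by the symmetry $B_k(x,y)=\overline{B_k(y,x)}$.
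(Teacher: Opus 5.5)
\textbf{There is a genuine gap, in Step 2, in the ``symmetric'' identity $\tBk\,\chi B_k\chi\approx\chi B_k\chi$.}

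Put $F:=B_k(\chi_1 w)$, transported to $\Cs^n$. It is holomorphic where $\tphik=k\varphi\circ\Sk$, so $\tBoxk(\chi_2F)=[\tBoxk,\chi_2]F$. The model gap then gives only
\[
\|(\Id-\tBk)(\chi_2F)\|\lesssim\|(\db\chi_2)\,F\|.
\]
The right-hand side is $O(k^{-\infty})$ only if $B_k(\chi_1w)$ is already negligible on $\supp\nabla\chi_2$. That is, you need to know beforehand that $B_k$ decays off the diagonal, which is what you are proving. With one and the same $\chi$ on both sides there is no reason at all for it to hold, since $\chi w$ may concentrate next to $\supp\nabla\chi$.

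Without this step you only have $B_k\chi_2\tBk\chi_1\approx\chi_2\tBk\chi_1$ and its adjoint. This requires nested cutoffs, with $\nabla\chi_2$ at rescaled distance $\gtrsim k^{\epsilon}$ from $\supp\chi_1$. These two relations never isolate $B_k$. To reach $\chi_1B_k\chi_1$ you need the local reproducing property $\chi_1\tBk(\chi_2F)\approx\chi_1F$ for holomorphic $F$ of polynomially bounded norm, and that is the missing ingredient.

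The same defect affects (b) for $y$ outside the rescaled $k^{\epsilon}$-ball, for instance $y\notin D$. A two-sided statement $\chi(B_k-\tBk)\chi\approx0$ says nothing there. What is needed is the one-sided global identity $B_k\chik\equiv\tchik(\Sk)^{-1}\tBk\Sk\chik$ on $L^2(X)$.

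\textbf{How the paper obtains this identity.} It uses no a priori information on $B_k$.
\begin{itemize}
\item[(1)] It compares $(\zeta-k^{-\frac2n|\Lambda|}\Box_{h^k,\omega_k})^{-1}\chik$ with the cut-off model resolvent. This uses only $\|(\zeta-\Box)^{-1}\|\le|\operatorname{Im}\zeta|^{-1}$ and the off-diagonal decay of the model resolvent.
\item[(2)] It transfers this to $f(k^{-\frac2n|\Lambda|}\Box_{h^k,\omega_k})\chik$ by Helffer--Sj\"ostrand.
\item[(3)] It concludes with $B_kf(\cdot)=B_k$, Lemma~\ref{lem:kL} and the mild gap.
\end{itemize}
This comparison requires the rescaled true Laplacian to coincide with $\tBoxk$ near the origin, which fails for anisotropic $\Lambda$. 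Indeed, $(\Sk)^*\omega$ has coefficients $k^{-\lambda_i-\lambda_j}\omega_{i\bar j}(\Sk z)$. So your assertion that the pulled-back operator is the Kodaira Laplacian of metrics converging to $\omega_0$ is incorrect. The paper fixes this by replacing $\omega$ with a volume-preserving $\omega_k$, which leaves $B_k$ unchanged.

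\textbf{A repair of your own route.} In the spirit of Hsiao--Marinescu, write $\Id-\tBk=G\,\tBoxk$, with $G$ the partial inverse of $\tBoxk$. Then prove off-diagonal decay for the kernel of $G$; this is a separate argument beyond the gap inequality. It would give
\[
\chi_1(\Id-\tBk)(\chi_2F)=\chi_1G[\tBoxk,\chi_2]F=O(k^{-\infty}).
\]

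\textbf{A further error in the gluing region.} Your region $|z_j|\le k^{\lambda_j-\delta'}$ with small $\delta'$ is far too large. There a monomial of weighted degree $>1$ contributes $k^{1-\delta'(|\alpha|+|\beta|)}$ to $k\varphi(\Sk z)-\phimodk$, which is not small. The gluing must take place at $|z|\lesssim k^{\epsilon_0}$ with $\epsilon_0$ small, as in \eqref{eq:sca_metric}. The real obstacle is localizing $B_k$ itself, not Step 1.
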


\begin{definition}[Orders of the expansion]\label{order}
Let $\Lambda=(\lambda_1,\dots,\lambda_n)$ be a weight vector. For $p\in\N_0$ and
$\mathbf n=\big(n^{(1)},\dots,n^{(p)}\big)\in(\N_0^n)^p$, Let
\[
d(\mathbf n):=\sum_{i=1}^p\Big(\sum_{j=1}^n n^{(i)}_j\lambda_j-1\Big),
\qquad
\ell(\mathbf n):=\sum_{i=1}^p\sum_{j=1}^n n^{(i)}_j .
\]
We call $\mathbf n$
\emph{admissible} if $\sum_{j=1}^n n^{(i)}_j\lambda_j\geq 1$ for every $i=1,\dots,p$. Put
\[
O_\Lambda:=\big\{d(\mathbf n)\ ;\ \mathbf n\ \text{admissible}\big\},\qquad
O^{\even}_\Lambda:=\big\{d(\mathbf n)\ ;\ \mathbf n\ \text{admissible},\ \ell(\mathbf n)\ \text{even}\big\}.
\]
Both sets contain $0$ and are contained in $\Q_{\geq0}$. We list the elements of $O_\Lambda$ in increasing order,
\[
0=d_0<d_1<d_2<\cdots.
\]

\end{definition}

\begin{theorem}[Full expansion]\label{Thm_Full expansion}
Assume that $\Box_k$ has a localized mild spectral gap on an open set $D\ni x_0$, and that $x_0$
satisfies the quasi-homogeneous model gap condition with weight vector
$\Lambda=(\lambda_1,\dots,\lambda_n)$ in the model coordinates
$z=(z_1,\dots,z_n):D\to\Cs^n$. 
Then, there are kernels $\Phi_j\in\Cinf(\Cs^n\times\Cs^n)$, $j\geq1$, such that for all
$m,r\in\N$, there is $C=C(m,r)>0$ satisfying
\begin{equation}\label{full_expansion}
\Big|B_k\big(\Sk(z),\Sk(w)\big)-k^{2|\Lambda|}\Big(B_\Lambda(z,w)
+\sum_{j=1}^{m}k^{-d_j}\Phi_{d_j}(z,w)\Big)\Big|_{\Cr}\leq C\,k^{2|\Lambda|-d_{m+1}},
\end{equation}
for all $(z,w)\in D_k\times D_k$. Here $B_\Lambda$ is the Bergman kernel of the Fock space on
$\Cs^n$ with weight measure $\beta(x_0)^{-1}e^{-2\phimod(z)}dV$, where $dV$ is the standard  Lebesgue measure, and  $\frac{\omega^n}{n!}=\beta\,dV$ in
the coordinates $z$. Moreover, if the model is \emph{even} in the sense that $\phimod(-z)=\phimod(z)$, we have  $\Phi_{d_j}(0,0)=0$ for all $d_j\notin O^{\even}_\Lambda$ .
\end{theorem}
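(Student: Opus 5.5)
By Theorem~\ref{Thm_Loc_Off}(a), it suffices to expand the localized kernel $\tBk(z,w)$ in powers of $k^{-d_j}$. On the compact set $E$ the error in Theorem~\ref{Thm_Loc_Off}(a) is $O(k^{-\infty})$, uniformly in $\Cr$. The rescaling $\Sk$ maps $D_k$ onto $D$ in the $z$-variables, but the expansion only needs to be uniform on compacts after rescaling. On the complement of compacts I would rely on the remainder estimate being uniform in a weighted sense; alternatively, I read "$(z,w)\in D_k\times D_k$" as the unscaled points lying in $D_k$, which corresponds to $z,w$ in a fixed polydisc. So the plan is to work with the rescaled operator $\tBoxk$ on $\Cs^n$. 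Its weight is $\tphik(z)=k\varphi(\Sk z)$, cut off outside a large ball and glued to $\phimod$ there. Its metric is $\tomegak(z)=\omega(\Sk z)$, written in the rescaled frame, so the density $\beta(\Sk z)$ enters.

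The first step is a Taylor expansion of the coefficients in the quasi-homogeneous grading. By the quasi-homogeneous hypothesis, no terms of weighted degree $<1$ appear. Hence
\[
\tphik(z)\sim\phimod(z)+\sum_{|(\alpha,\beta)|_\Lambda>1}k^{1-|(\alpha,\beta)|_\Lambda}a_{\alpha,\beta}z^\alpha\bar z^\beta .
\]
The metric coefficients expand in the same way, $\omega_{i\bar j}(\Sk z)\sim\sum_\gamma k^{-|\gamma|_\Lambda}c_\gamma z^\gamma$, and the powers of $k$ produced by the metric can be recorded as degrees $d(\mathbf n)$ of admissible tuples. From this, $\tBoxk=\Box_{\phimod,\omega_0}+\sum_{j\geq1}k^{-e_j}\mathcal P_j$ up to $O(k^{-M})$ errors with polynomial coefficients. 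Here the exponents $e_j$ lie in $\{|(\alpha,\beta)|_\Lambda-1\}\cup\{|\gamma|_\Lambda\}$ and the $\mathcal P_j$ are differential operators with polynomial coefficients. The adjoint setting $L^k\otimes K_X$ only changes the weight by $\log\det\omega$, which is handled in the same way.

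The second step is the resolvent argument. Use the gap $\Spec\Box_{\phimod}\cap(0,c)=\varnothing$ of Definition~\ref{def:model-gap}(ii) and write
\[
\tsBk=\frac{1}{2\pi\ii}\oint_{|\zeta|=c/2}(\zeta-\tBoxk)^{-1}d\zeta .
\]
I would use the Neumann series
\[
(\zeta-\tBoxk)^{-1}=\sum_{p}\Rmod\bigl(\textstyle\sum_j k^{-e_j}\mathcal P_j\,\Rmod\bigr)^p .
\]
Products of $p$ perturbation terms carry powers $k^{-d(\mathbf n)}$ with $\mathbf n$ admissible. This is exactly why the exponent set is $O_\Lambda$ of Definition~\ref{order}. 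Integrating in $\zeta$ then gives kernels $\Phi_{d_j}$, and the leading term is $B_\Lambda$, the Bergman kernel of $\Box_{\phimod}$ with the normalization $\beta(x_0)^{-1}$. The $\Cr$ remainder bound $k^{-d_{m+1}}$ follows from Sobolev estimates in weighted spaces together with elliptic regularity of $\Box_{\phimod}$ on compacts.

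The main obstacle is that the $\mathcal P_j$ have unbounded polynomial coefficients, so $\mathcal P_j\Rmod$ is not bounded on $L^2(e^{-2\phimod})$. My first attempt would be to work in weighted spaces with weights $(1+|z|_\Lambda)^{s}$, using the anisotropic norm $|z|_\Lambda=\sum_j|z_j|^{1/\lambda_j}$. I would prove that $\Rmod$ maps $\la z\ra^{-s}$-weighted Sobolev spaces to themselves for all $s$, with loss controlled by commutators $[\Box_{\phimod},z^\gamma]$. This should follow from the gap together with commuting the polynomial weights through $\Box_{\phimod}$. I would also truncate to $|z|\le k^{\epsilon}$, since the cut-off error is $O(k^{-\infty})$ by the off-diagonal decay in Theorem~\ref{Thm_Loc_Off}(b). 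On that region each polynomial of degree $q$ costs at most $k^{q\epsilon}$, which is absorbed by taking $\epsilon$ small relative to the gaps between consecutive $d_j$.

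For the even statement, consider the parity operator $(Uf)(z)=f(-z)$. It commutes with $\Box_{\phimod}$, and $\mathcal P_j$ has parity $(-1)^{\ell}$, where $\ell$ is the total polynomial degree involved. A term with $\ell(\mathbf n)$ odd is then an odd operator, so its kernel satisfies $\Phi(-z,-w)=-\Phi(z,w)$ and in particular vanishes at $(0,0)$.
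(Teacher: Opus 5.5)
Your argument is a workable alternative, but it is not the paper's route. You expand the Riesz projection $\frac{1}{2\pi\ii}\oint(\zeta-\tBoxk)^{-1}d\zeta$ by a Neumann series in the differential perturbation $\tBoxk-\Box_{\phimod,\omega_0}$, which is the Dai--Liu--Ma/Ma--Marinescu strategy.

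In Section~\ref{sec_6} the paper never perturbs the Laplacian. Following Hsiao--Savale, it uses that $\hBk=e^{-\dphik}\sBmod e^{\dphik}$ maps into $\Ker\tHk$: the bounded multiplier $e^{-\dphik}$ carries the range of $\sBmod$ onto that of $\tsBk$. Hence $\tsBk\hBk=\hBk$, $\hBk\tsBk=\tsBk$, and $(\Id-\Qk)\tsBk=\hBk^*$ with $\Qk=\hBk-\hBk^*$. The kernel of $\Qk$ is $\sBmod(z,w)$ times a factor that vanishes at leading order. So the finite Neumann identity in $\Qk$ together with Lemma~\ref{lem:symbol_composition} gives $\tsBk\sim\sum_j\Qk^{\#j}\#\hBk^*$.

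What the paper's route buys:
\begin{itemize}
\item All perturbations are multiplication operators, by $e^{\pm\dphik}$ and the volume density.
\item Each $\Phi_{d_j}$ is a finite sum of iterated compositions of $\sBmod$ with polynomial multipliers.
\item The only analytic input is the weighted off-diagonal decay of $\sBmod$ and $\tsBk$ (Theorems~\ref{Thm_off_B_mod}, \ref{Thm_off_B}).
\item The unbounded polynomial coefficients you single out as the main obstacle are simply absorbed into the growth factor allowed in $\F$, and parity follows from evenness of $\sBmod$.
\end{itemize}
The price is that this trick is specific to Bergman projections. Your expansion does not use holomorphy, and it writes the coefficients through $\sBmod$ and the reduced resolvent of $\Box_{\phimod}$.

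In exchange, your route requires three things the paper avoids. You must prove weighted mapping properties of the model resolvent. You must justify the truncation to $|z|\le k^{\epsilon}$. You must control Taylor remainders whose polynomial degree grows with the order; for fixed small $\epsilon$ this is handled by taking more Taylor terms, not by choosing $\epsilon$ relative to the gaps $d_{j+1}-d_j$.

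You can lighten this work. The Bergman projection depends only on the measure $e^{-2\tphik}dV_{\tomegak}$. So you may replace $\tBoxk$ by $\Box_{\psi_k,\omega_0}$ with $\psi_k=\tphik-\frac12\log(\tomegak^n/\omega_0^n)$, as in Section~\ref{sec_non}. The gap persists, and the perturbation becomes first order, with coefficients given by first derivatives of $\psi_k-\phimod$ acting on $\db u$. Since $\db(\zeta-\Box_{\phimod,\omega_0})^{-1}$ is uniformly bounded on the contour, no second-order terms appear.

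Two smaller points:
\begin{itemize}
\item The localization on $\Cs^n$ that justifies your truncation comes from the model resolvent estimates (Theorems~\ref{Thm1}, \ref{Thm2}). It does not come from Theorem~\ref{Thm_Loc_Off}(b), which concerns $B_k$ on $X$.
\item In the parity argument, split each $\mathcal P_j$ into its even and odd parts, because a single exponent $d_j$ can arise from monomials of both parities.
\end{itemize}
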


In complex dimension one, Marinescu and Savale \cite{marinescu2024bochner} assume that every point is of finite type. In this setting,  for dimensional reasons, $L$ must be ample, and the finite-type condition automatically implies Definition \ref{DefDD}. It then follows from our main theorems, together with Remark \ref{sgp} and Theorem \ref{prop_goodmodel}, that their global finite-type assumption need only be imposed at a single point. This yields the following simple but interesting observation.

\begin{corollary}[Local version of the Marinescu-Savale result]
Let $X$ be a Riemann surface, and let $(L,h)$ be a semipositive line bundle. Suppose that $h$ is of finite-type at a point $x_0\in X$. Then the off-diagonal decay \eqref{off} and the full asymptotic expansion \eqref{full_expansion} hold for the Bergman kernel $B_k(x,y)$ of $L^k$ near $x_0$.
\end{corollary}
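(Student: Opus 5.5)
The plan is to reduce the corollary to Theorems~\ref{Thm_Loc_Off} and~\ref{Thm_Full expansion}. That requires two things near $x_0$: (i) a localized mild spectral gap for $\Box_k$ on $L^k$ on some neighborhood $D\ni x_0$, and (ii) the quasi-homogeneous model gap condition at $x_0$ for a suitable weight vector.

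\textbf{Step 1: normal form.} Since $n=1$, the finite-type hypothesis means $\varphi$ vanishes to some finite order at $x_0$. Take a coordinate $z$ centred at $x_0$ and change the frame $e_L$ holomorphically to remove the pluriharmonic parts $\operatorname{Re}(cz^j)$ from the relevant jet. Since $\varphi$ is subharmonic and not pluriharmonic to infinite order, the lowest-order non-harmonic homogeneous part $P(z,\bar z)$ of the Taylor expansion has some degree $2m$. It satisfies $\Delta P\geq0$ and $P\not\equiv 0$. All other monomials have degree $>2m$, so with $\Lambda=(1/(2m))$ they have weighted degree $>1$.

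This is exactly Definition~\ref{DefDD} with $n=1$, and the quasi-homogeneous model is $\phimod=P$. The model gap $\Spec\Box_P\cap(0,c)=\varnothing$ is the one-dimensional case of the decoupled model statement announced in Remark~\ref{rem:model-gap-examples}, namely Proposition~\ref{prop_goodmodel} and the Section~\ref{sec_de} results. Alternatively it follows from the Marinescu--Savale appendix: the operator $\Box_P$ on $\Cs$ with a subharmonic homogeneous weight not identically harmonic has a gap, via the subelliptic estimate and homogeneity.

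\textbf{Step 2: the localized mild spectral gap.} $X$ is a compact Riemann surface, as implicit in the Marinescu--Savale setting. The curvature $\ii\Theta_h=\ii\partial\db\varphi$ is semipositive and not identically zero near $x_0$, because $\Delta P\not\equiv0$. Hence it is strictly positive at some point near $x_0$, and
\[
\deg L=\frac{1}{2\pi}\int_X\ii\Theta_h>0,
\]
so $L$ is ample. Remark~\ref{sgp}(1), i.e.\ Donnelly's theorem, then gives $\Spec\Box_k\cap(0,c)=\varnothing$ for $k\geq k_0$. By Remark~\ref{gs}, with $X$ compact and hence complete, this is the localized mild spectral gap on $D=X$, and a fortiori on any $D\ni x_0$.

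\textbf{Step 3: conclusion.} Apply Theorem~\ref{Thm_Loc_Off}(b) to get \eqref{off}, and Theorem~\ref{Thm_Full expansion} to get \eqref{full_expansion} with $\Lambda=(1/(2m))$. These theorems are stated for $L^k$ (not only for the adjoint setting), so no $K_X$ twist is needed.

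The main obstacle is verifying the model gap for $\Box_P$ on $\Cs$ when $P$ is merely subharmonic homogeneous and possibly vanishing on rays, since $\Delta P$ may vanish on lines. If the cited results do not already cover it, I would argue as follows. First, $\Box_P$ has compact resolvent modulo its kernel on bounded-energy pieces: use homogeneity and the weighted Hörmander estimate with weight $\Delta P$, which is positive off finitely many rays and of finite type. Second, rule out a sequence of eigenvalues tending to $0$ by scaling. If $\Box_P u=\mu u$ with $\mu\to0$, rescale $z\mapsto \mu^{-1/(2m)}z$, which conjugates $\Box_P$ to $\mu\Box_P$. This shows the spectrum is either $\{0\}\cup[c,\infty)$ or has $0$ as a limit point. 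The latter is excluded by the subelliptic estimate $\|u\|^2\lesssim \|\db u\|^2$ for $u\perp\Ker$, which holds on the unit annulus and hence globally by homogeneity.
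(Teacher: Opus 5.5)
Your proposal is correct and follows the paper's own route. In dimension one, finite type yields a decoupled model with $\Lambda=(\tfrac{1}{2m})$ and $\phimod=P$. The model gap comes from Theorem~\ref{prop_decouple}; Proposition~\ref{prop_goodmodel} by itself only covers the radial case $P=a|z|^{2m}$. Positivity of $\deg L$ on the compact surface gives ampleness and hence the localized mild spectral gap via Remark~\ref{sgp}, and Theorems~\ref{Thm_Loc_Off} and~\ref{Thm_Full expansion} then apply to $L^k$.

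Since Theorem~\ref{prop_decouple} already handles every subharmonic homogeneous $P$ without harmonic part, your fallback argument is not needed. It would not work anyway. Rescaling $z\mapsto tz$ conjugates $\Box_P$ to $t^{-2}\Box_{t^{2m}P}$, not to a multiple of $\Box_P$. Also, the estimate $\|u\|^2\lesssim\|\db u\|^2$ for $u\perp\Ker\Box_P$ that you invoke is the spectral gap itself, so appealing to it is circular.
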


\subsection{Applications to branched coverings}
Let $f:X\to Y$ be a finite holomorphic map between compact complex
manifolds of dimension $n$, and let
\[
\Ram(f):=\{x\in X:\rank_{\Cs}df_x<n\}
\]
be its ramification locus. Since $f$ is finite, $\Ram(f)$ is a (possibly singular) analytic set in $X$ with codimension $1$. Let $(L,h_L)\to Y$ be a positive Hermitian
holomorphic line bundle. Then $f^*L$ is ample, while the pull-back metric
$f^*h_L$ has semipositive curvature with degenerate locus along $\Ram(f)$.
Assume $x_0\in\Ram(f)$ such that
\begin{equation}\label{rank_assumption}
\rank_{\Cs}df_{x_0}=n-1.    
\end{equation}The corank-one assumption is imposed mainly for simplicity; see Remark~\ref{rem:higher-corank} for possible extensions to stratified higher-corank settings.

Let $m=m_{x_0}(f)$ be the local multiplicity of $f$ at $x_0$. Namely,
for a sufficiently small neighborhood $U$ of $x_0$,
\begin{equation}\label{m}
 \#\bigl(f^{-1}(y)\cap U\bigr)=m   
\end{equation}
for every regular value $y$ sufficiently close to $y_0=f(x_0)$.
Equivalently, $m$ is the number of local sheets meeting at $x_0$.

Fix a positive Hermitian $(1,1)$-form $\omega$ on $X$. We first establish the Bergman kernel
asymptotics of $(f^*L)^k$, endowed with the metric $(f^*h_L)^k$, near $x_0$. By the rank
assumption \eqref{rank_assumption}, $m:=m_{x_0}\geq 2$, and we fix the weight vector
$\Lambda=(\frac{1}{2m},\frac12,\cdots,\frac12)$. We may choose adapted holomorphic coordinates
$z=(z_1,z')$, $z'=(z_2,\cdots,z_n)$, centered at $x_0$ and unitary with respect to $\omega$
at $x_0$, i.e. $\omega(x_0)=\ii\sum_{j=1}^ndz_j\wedge d\bar z_j$, such that the
model weight of the pulled-back metric $f^*h_L$ takes the form (see \S \ref{sec_b} )
\begin{equation*}
     \phimod(z)=\frac{1}{2}a_0(x_0)\,|z_1|^{2m}+\sum_{j=2}^{n}\frac{1}{2}c_j|z_j|^2,
\qquad a_0(x_0),c_2,\cdots,c_n>0 .
\end{equation*}
Here, $a_0(x_0)>0$ given in Definition \ref{Def_a0}, which is intrinsic (cf. Lemma \ref{Lem_a0_intrinsic}). We then set
$S_k(z):=(k^{-\frac{1}{2m}}z_1,k^{-\frac12}z_2,\cdots,k^{-\frac12}z_n)$, and $D_k=S_k(D)$. Theorem \ref{Thm_Loc_Off}, and Theorem \ref{Thm_Full expansion} gives the following results for the Bergman kernel $B_k(x,y)$ for $L^k$ (or $L^k\otimes K_X$):
\begin{theorem}\label{Thm_branched1}
Under the rank assumption \eqref{rank_assumption} and the construction above, there are kernels
$\Phi_j(z,w)\in\Cinf(\Cs^n\times\Cs^n)$, $j\in\N_0$, such that for all $N,r\in\N_0$ and each
compact set $K\Subset\Cs^n$ there is $C=C(N,r,K)>0$ with
\[
\Big|B_k(S_k(z),S_k(w))-k^{n-1+\frac1m}\sum_{j=0}^{N}k^{-\frac{j}{2m}}\Phi_j(z,w)\Big|_{\Cr}
\leq C\,k^{n-1+\frac1m-\frac{N+1}{2m}},\qquad (z,w)\in K\times K.
\]
The leading term at the origin is given by
\begin{equation}\label{PHI}
\Phi_0(0,0)=\frac{a_0(x_0)^{\frac1m}}{(2\pi)^n\Gamma(1+\frac1m)}\,
\det_{\omega}\Big(\ii\Theta_{f^*h_L}(f^*L)_{x_0}\big|_{(\ker df_{x_0})^{\perp_\omega}}\Big),
\end{equation}
where the determinant is taken with respect to the Hermitian metric induced by $\omega$ on
$(\ker df_{x_0})^{\perp_\omega}$. Also, the off-diagonal statement holds in the sense of (b) in Theorem \ref{Thm_Loc_Off}.
\end{theorem}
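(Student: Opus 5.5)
The plan is to reduce Theorem~\ref{Thm_branched1} to Theorems~\ref{Thm_Loc_Off} and~\ref{Thm_Full expansion}. This requires three checks: (1) $\Box_k$ has a localized mild spectral gap near $x_0$; (2) $x_0$ satisfies the quasi-homogeneous model gap condition with $\Lambda=(\frac1{2m},\frac12,\dots,\frac12)$; (3) the set $O_\Lambda$ and the constant $B_\Lambda(0,0)$ take the stated form.

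For (1), $X$ is compact and $f^*L$ is ample. The metric $f^*h_L$ is semipositive and strictly positive off $\Ram(f)$. Donnelly's theorem, Remark~\ref{sgp}(1), then gives a uniform spectral gap on all of $X$. Alternatively, Remark~\ref{sgp}(2) applies once we know the decoupled form below. In the adjoint case we instead use Lemma~\ref{lem_de_l}.

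For (2), we use the corank-one assumption \eqref{rank_assumption}. By the holomorphic rank theorem and the Weierstrass preparation theorem, we choose coordinates with $f(z_1,z')=(g(z_1,z'),z')$, and after a coordinate change in $z_1$ we may take $g=z_1^m$ up to a unit. Pulling back a local weight $\psi$ of $h_L$, normalized at $y_0$ so that $\psi(y)=\sum b_{i\bar j}y_i\bar y_j+O(|y|^3)$ with no pluriharmonic terms, gives
\[
\varphi = b_{1\bar1}|z_1|^{2m} + \sum_{i,j\ge2} b_{i\bar j}z_i\bar z_j + 2\operatorname{Re}\sum_{j\ge2}b_{1\bar j}z_1^m\bar z_j+\cdots .
\]
The cross terms have $\Lambda$-degree $\frac12+\frac12=1$, so they belong to the model. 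A linear change in $z'$ diagonalizes the $z'$-block, but the cross terms must be removed by completing the square. Setting $\tilde z_j=z_j+c_jz_1^m$ is holomorphic and preserves the weights. After it, the model becomes $\frac12a_0|z_1|^{2m}+\sum\frac12c_j|z_j|^2$, where $a_0$ is a Schur-complement quantity (this matches Definition~\ref{Def_a0}). All remaining terms have $\Lambda$-degree greater than $1$. Since every monomial has even total degree coming from $|y|^2$ and higher expansions, no terms of degree less than $1$ appear.

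The model weight is decoupled, so its Bergman space is a tensor product of one-variable weighted Fock spaces. Each factor has a spectral gap: for $|z_1|^{2m}$ this is known in one dimension (Section~\ref{sec_de}), and for the Gaussian factors it is the harmonic-oscillator gap. Hence the quasi-homogeneous model gap condition holds, and the theorems apply. This step, the normal form together with the proof that $a_0$ is intrinsic, is the main obstacle. The key difficulty is controlling the cross terms $z_1^m\bar z_j$ and verifying that the unitary normalization of $\omega$ at $x_0$ is compatible with the shear $\tilde z=z'+cz_1^m$. This works because the shear is tangent to the identity at $x_0$ when $m\ge2$.

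For (3), we have $2|\Lambda|=\frac1m+(n-1)$. Every $\lambda_j$ lies in $\frac1{2m}\N$, so $O_\Lambda\subset\frac1{2m}\N_0$. Reindexing the expansion of Theorem~\ref{Thm_full_expansion} over $j/(2m)$, with $\Phi_j=0$ at missing indices, gives the stated sum and remainder. Localization and off-diagonal decay follow from Theorem~\ref{Thm_Loc_Off}.

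For the leading term, $B_\Lambda(0,0)=\beta(x_0)\prod_j B_{(j)}(0,0)$, and $\beta(x_0)=1$ by unitarity. The Gaussian factors give $c_j/\pi$ each. For the factor $e^{-a_0|z_1|^{2m}}$, the constant function has norm $\int e^{-a_0|z|^{2m}}dV=\pi a_0^{-1/m}\Gamma(1+\frac1m)$, and the radial weight makes monomials orthogonal, so this factor gives $a_0^{1/m}/(\pi\Gamma(1+\frac1m))$. Finally, $\prod_{j\ge2}c_j$, with the normalization $\ii\Theta=2\ii\partial\bar\partial\varphi$ and the resulting factors of $2$ giving the $(2\pi)^{-n}$, equals the determinant of the curvature restricted to $(\ker df_{x_0})^{\perp_\omega}=\operatorname{span}\{\partial_{z_2},\dots,\partial_{z_n}\}$. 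Keeping the constants consistent produces \eqref{PHI}.
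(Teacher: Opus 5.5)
Your route is the paper's: the localized mild spectral gap from ampleness of $f^*L$ via Remark~\ref{sgp}; the quasi-homogeneous model gap for the decoupled model $\frac12a_0|z_1|^{2m}+\sum_{j\ge2}\frac12c_j|z_j|^2$; then Theorems~\ref{Thm_Loc_Off} and~\ref{Thm_Full expansion}; and finally $B_\Lambda(0,0)=1/\|1\|^2$ by rotation invariance. Your shear $\tilde z_j=z_j+c_jU(0)z_1^m$ is the source-side version of the paper's choice of target coordinates in which $\partial_{w_1}$ is curvature-orthogonal to $\operatorname{Im}df_{x_0}$. Indeed, replacing $w_j$ by $w_j+\gamma_jw_1$ changes $z_j=w_j\circ f$ by $\gamma_jg$. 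Your reindexing via $O_\Lambda\subset\frac1{2m}\N_0$ makes explicit a step the paper leaves implicit.

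The constant computation does not work as written.
\begin{itemize}
\item With $\omega(x_0)=\ii\sum_j dz_j\wedge d\bar z_j$ and $dV=(\frac{\ii}{2})^n\prod_j dz_j\wedge d\bar z_j$, one has $\beta(x_0)=2^n$, not $1$.
\item The relevant model measure is $e^{-2\phimod}\,\omega_0^n/n!=\beta(x_0)e^{-2\phimod}dV$. Rescaling $e^{-2k\varphi}\omega^n/n!$ produces this measure, and it is the one the paper's computation $\int e^{-2\phimod}\omega_0^n/n!=2^n\int e^{-2\phimod}dV$ uses. Hence $B_\Lambda(0,0)=\beta(x_0)^{-1}\prod_jB_{(j)}(0,0)$, and the $2^{-n}$ in $(2\pi)^{-n}$ is exactly this $\beta(x_0)^{-1}$.
\item The curvature normalization contributes no factor of $2$. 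Since $\ii\Theta=2\ii\partial\bar\partial(\frac12c_j|z_j|^2)=c_j\,\ii dz_j\wedge d\bar z_j$, the determinant $\det_\omega$ on $\operatorname{span}\{\partial_{z_2},\dots,\partial_{z_n}\}$ equals $\prod_{j\ge2}c_j$ exactly.
\end{itemize}
Your bookkeeping ($\beta=1$, plus factors of $2$ attributed to $\ii\Theta$) therefore yields $\pi^{-n}$, not $(2\pi)^{-n}$.

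Separately, you cannot in general reach $g=z_1^m\cdot(\text{unit})$ by changing $z_1$ alone. For example, $g=z_1^3-z_2z_1$ has a reducible zero set. That normal form is what the smooth-ramification hypothesis of Theorem~\ref{Thm_branched_uniform} provides. What does hold, and suffices, is that the $\Lambda$-principal part of $g=U\cdot P$ is $U(0)z_1^m$. This is because $a_j(z')z_1^j$ with $a_j(0)=0$ and $(U-U(0))P$ both have $\Lambda$-weight $>\frac12$.

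This same fact is the correct reason no terms of weight $<1$ occur: $\psi$ vanishes to second order and each $y_i\circ f$ has weight $\ge\frac12$. Parity of degrees is not the reason. Keeping track of it also retains the factor $|U(0)|^2$ in $a_0$, which your displayed expansion of $\varphi$ omits.
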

Next, we denote $M:=\operatorname{Ram}(f)$ and assume further that $M$ is a smooth hypersurface near $x_0$ and
\begin{equation*}
    \rank_{\Cs} d(f\mid_M)_{x_0}=n-1 .
\end{equation*}
After shrinking the polydisc, we identify $D=\Delta\times D'$ and keep the fixed coordinates
$z=(z_1,z')$ constructed above, in which $M\cap D=\{z_1=0\}$.
\begin{theorem}\label{Thm_branched_uniform}Under the additional assumptions above, there exists a coordinate $z=(z,z'):D\to\Cs^n$, where $z'=(z_2,\cdots,z_n)$ such that $f(z)=(z_1^m,z')$. Denote the normal rescaling map as
\[
S^{\perp}_k:\Cs\times D'\to D,\qquad
S^{\perp}_k(\zeta_1,z'):=\big(k^{-\frac{1}{2m}}\zeta_1,\,z'\big).
\] 
There are kernels
$\Phi_j(z';\zeta_1,\eta_1)\in\Cinf(D'\times\Cs\times\Cs)$, $j\in\N_0$, such that for all
$N,r\in\N_0$, each compact set $K\Subset D'$ and each compact set
$\mathcal{K}\Subset\Cs\times\Cs$, there is $C=C(N,r,K,\mathcal{K})>0$ with
\[
\Big|B_k\big(S^{\perp}_k(\zeta_1,z'),S^{\perp}_k(\eta_1,z')\big)
-k^{n-1+\frac1m}\sum_{j=0}^{N}k^{-\frac{j}{2m}}\Phi_j(z';\zeta_1,\eta_1)\Big|_{\Cr}
\leq C\,k^{\,n-1+\frac1m-\frac{N+1}{2m}} ,
\]
for  $(z',\zeta_1,\eta_1)\in K\times\mathcal{K}$. On the hyperplane $M$, $\Phi_0(z';0,0)$ is explicitly given by \eqref{PHI}. Moreover, for $K\Subset D'$, let
\[
\widetilde{D}_{k,K}:=\big\{(k^{-\frac{1}{2m}}\zeta_1,z'):\ |\zeta_1|\leq1,\ z'\in K\big\}.
\]
Then, for every $\delta>0$ and all $N,r\in\N_0$, there is $C=C(\delta,N,r,K,\mathcal{K})>0$ such that 
$|B_k(z,w)|_{\Cr}\leq C\,k^{-N}$, 
 for $z\in\widetilde{D}_{k,K}$ and $d_\omega(w,M)\geq k^{-\frac{1}{2m}+\delta}$. The same
bound holds with $z$ and $w$ interchanged.
\end{theorem}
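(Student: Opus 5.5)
The plan is to reduce Theorem~\ref{Thm_branched_uniform} to a version of Theorems~\ref{Thm_Loc_Off} and \ref{Thm_Full expansion} that is uniform in a parameter $z'\in K$. Concretely, the model argument is applied at every point of $M\cap D$ simultaneously, with all constants controlled uniformly.

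\emph{Step 1: normal form.} Since $M=\{z_1=0\}$ is smooth, $f$ has rank $n-1$ along $M$, and $f|_M$ is a local biholomorphism onto its image, I would first straighten $f(M)$ in $Y$ to $\{w_1=0\}$ and pull the remaining coordinates $w'$ back to $z'$. Then $f(z)=(g(z),z')$ with $g(0,z')=0$, and $g$ vanishes to order exactly $m$ in $z_1$, uniformly in $z'$. This uses the fact that the local multiplicity is constant along a smooth ramification hypersurface where $f|_M$ has full rank. Weierstrass preparation and an $m$-th root give $g=u\,z_1^m$ with $u$ a nonvanishing unit, so replacing $z_1$ by $z_1u^{1/m}$ yields $f=(z_1^m,z')$.

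\emph{Step 2: pulled-back weight and its model.} Write $\psi$ for the local weight of $h_L$ near $f(x_0)$. The pulled-back weight is $\varphi(z)=\psi(z_1^m,z')$. Fix $p=(0,p')\in M$, recentre $z'$ at $p'$, and change the frame to remove pluriharmonic terms. The Taylor expansion in the weights $\Lambda=(\frac1{2m},\frac12,\dots,\frac12)$ then has quasi-homogeneous part
\[
\psi_{1\bar1}(0,p')|z_1|^{2m}+\text{(Hermitian quadratic form in } z'),
\]
possibly with a mixed term $\operatorname{Re}(c\, z_1^m\bar z_j)$ of weighted degree $1$. I would remove this mixed term by a holomorphic coordinate change $z'\mapsto z'+b z_1^m$; equivalently, the Hermitian form $\psi_{i\bar j}$ in the variables $(z_1^m,z')$ is positive definite and can be diagonalised. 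This puts the model weight in decoupled form $\frac12 a_0|z_1|^{2m}+\sum_j\frac12 c_j|z_j|^2$. All remaining monomials have weighted degree $\geq 1+\frac1{2m}$, and every coefficient depends smoothly on $p'$.

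\emph{Step 3: spectral hypotheses.} The model gap for the decoupled model follows from Section~\ref{sec_de}: the spectrum of a one-dimensional $|z_1|^{2m}$ Fock Laplacian is bounded below away from $0$, and the harmonic oscillator in each $z_j$ has a gap. The gap constant depends continuously on $(a_0,c_j)$, so it is uniform for $p'\in K$. The localized mild spectral gap holds on a neighbourhood of $K$ by Remark~\ref{sgp}(3), taking $h_{\mathrm{sing}}$ to be a positive metric on the ample bundle $f^*L$ (on $X$ compact), or by Lemma~\ref{lem_de_l} in the adjoint setting.

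\emph{Step 4: uniform version of the main theorems.} I would then rerun the proofs of Theorems~\ref{Thm_Loc_Off} and \ref{Thm_Full expansion}, tracking the dependence on the base point $p'$. The ingredients are:
\begin{itemize}
\item the resolvent comparison between $\Box_k$ and the rescaled model;
\item the Taylor expansion of the rescaled weight, whose coefficients of $k^{-j/(2m)}$ are polynomials with coefficients smooth in $p'$;
\item Sobolev and elliptic estimates whose constants depend only on finitely many derivatives of $\varphi,\omega$, hence are uniform on $K$.
\end{itemize}
This gives an expansion of $B_k(S_k^{(p')}(\zeta_1,\zeta'),S_k^{(p')}(\eta_1,\eta'))$ with coefficients smooth in $p'$. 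Since $d(\mathbf n)\in\frac1{2m}\N_0$ for this $\Lambda$, the exponents are $j/(2m)$ and the prefactor is $k^{2|\Lambda|}=k^{\frac1m+n-1}$.

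Setting $\zeta'=\eta'=0$ gives $\Phi_j(p';\zeta_1,\eta_1)$; smoothness in $p'$ and all derivatives come from the uniform $\Cr$ bounds. Derivatives in $z'$ at unrescaled scale only cost powers $k^{1/2}$ per derivative of the rescaled kernel, so they are handled by differentiating the expansion in the parameter $p'$ rather than in $\zeta'$. The formula for $\Phi_0(z';0,0)$ is \eqref{PHI} applied at $p$ (Theorem~\ref{Thm_branched1}).

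\emph{Step 5: off-diagonal decay.} Let $z\in\widetilde D_{k,K}$ with $z=(k^{-1/2m}\zeta_1,p')$, and suppose $d(w,M)\geq k^{-1/2m+\delta}$. Then $w\notin D_{k,\delta}(p)$, the polydisc centred at $p$, and Theorem~\ref{Thm_Loc_Off}(b) applies with constants uniform in $p'$.

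The main obstacle is Step 4: verifying that every constant in the resolvent/localization argument of Theorem~\ref{Thm_Loc_Off} depends only on finitely many jets at $p$ and on the model gap constant, so that the estimates are uniform and smooth in the parameter. I would handle this by working with the family of recentred weights $\varphi_{p'}$ as a smooth family in $\Cinf$ and invoking the proofs verbatim.
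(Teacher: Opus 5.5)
Your proposal follows the paper's proof: bring $f$ to the normal form $(z_1^m,z')$, then apply Theorems~\ref{Thm_Loc_Off} and~\ref{Thm_Full expansion} at each base point $(0,z')\in M$ with $z'$ as a parameter and uniform constants, which is the same level of detail at which the paper argues. The differences are minor. The paper justifies $g=z_1^m v$ by writing $\det df=\partial_{z_1}g=z_1^{m-1}u$ with $u$ a unit, and this is exactly the proof of the constant-multiplicity fact you invoke without proof. Your removal of the weight-one mixed term $\operatorname{Re}(c\,z_1^m\bar z_j)$ via $z'\mapsto z'+bz_1^m$ makes explicit a step the paper passes over when it cites Proposition~\ref{prop_goodmodel} for the model gap along all of $M$.
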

\begin{example}[A double-plane K3 surface]
Let $s_6\in H^0(\Ps^2,\mathcal O_{\Ps^2}(6))$ have smooth zero locus
$\mathcal S$, and let
\[
X=\bigl\{(y,\xi):\xi^{\otimes2}=s_6(y)\bigr\}
\subset\operatorname{Tot}\mathcal O_{\Ps^2}(3).
\]
Then the projection $f:X\to\Ps^2$ is a double cover branched along
$\mathcal S$, and $X$ is a smooth K3 surface (cf. \cite[Example~1.3(iv)]{Huybrechts_2016}). Its ramification locus $M$ is smooth,
$f|_M:M\to\mathcal S$ is biholomorphic, and hence
$\rank_{\Cs}d(f|_M)=n-1$.

Take $L=\mathcal O_{\Ps^2}(1)$ with the Fubini--Study metric $h_{\mathrm{FS}}$ and fix a
positive Hermitian form $\omega$ on $X$. Fix $p\in M$. Let $v_p\in T_p^{1,0}M$ be any $\omega$-unit vector, and set
$c_2(p):=\left|df_p(v_p)\right|_{\Theta}^{2}$, where $|\cdot|_{\Theta}$ is the Hermitian norm on
$T_{f(p)}^{1,0}\Ps^2$ induced by the positive curvature form
$\ii\Theta_{h_{\mathrm{FS}}}(L)_{f(p)}$.
Theorem~\ref{Thm_branched_uniform}
gives a full expansion in $\Cinf$-topology under the normal rescaling
$z_1\mapsto k^{-1/4}z_1$, and the off-diagonal decay estimate. In particular,

\[
    B_k(p,p)
    \sim
    k^{3/2}\sum_{j=0}^{\infty}b_j(p)k^{-j/4},
    \qquad
    b_0(p)
    =
    \frac{\sqrt{a_0(p)}\,c_2(p)}{2\pi^{5/2}},
\]uniformly for $p$ in compact subsets of $M$.
\end{example}

\subsection{Certain non-quasihomogeneous models}

We conclude this section with a result of independent interest about
certain non-quasihomogeneous models, for which localization and
leading-order asymptotics can still be established. The example below
shows that the quasihomogeneity assumption in
Theorem~\ref{Thm_Full expansion} cannot in general be omitted if one expects an
asymptotic expansion purely in powers of the semiclassical parameter:
logarithmic factors may occur. The corresponding abstract result is
stated in Theorem~\ref{Thm_non} of Section~\ref{sec_non}. Here, we only present
a concrete geometric example. The analysis in this part relies heavily on the work of Kamimoto; see \cite{kamimoto2004newton}.

\begin{example}[A non-quasihomogeneous projective model]
Let $X=\mathbb P^2$ and $L=\mathcal O_{\mathbb P^2}(1)$, and let $[Z_0:Z_1:Z_2]$ be the homogeneous coordinates on $\mathbb P^2$.
For a holomorphic linear form $S\in H^0(\mathbb P^2,L)$, define
\begin{equation*}
    |S|_h^2([Z])
    :=
    \frac{|S(Z)|^2}
    {\left(
        |Z_0|^6
        +3|Z_0Z_1Z_2|^2
        +3|Z_1|^6
        +3|Z_2|^6
    \right)^{1/3}}.
\end{equation*}
Since the expression in parentheses is positive on
$\mathbb C^3\setminus\{0\}$ and homogeneous of degree six, this defines
a smooth Hermitian metric on $L$. Moreover, its curvature is
semipositive. We now consider the affine chart
\[
    U_0=\{Z_0\neq 0\},
    \qquad
    z_j=\frac{Z_j}{Z_0},
    \quad j=1,2,
\]
and the local frame $e_L=Z_0$ of $L$. In this frame,
\[
    |e_L|_h^2
    =
    \left(
        1
        +3|z_1z_2|^2
        +3|z_1|^6
        +3|z_2|^6
    \right)^{-1/3}.
\]
At the point $p=[1:0:0]$, which corresponds to $z=0$, we have
\[
    \vphi(z)
    =-\frac{1}{2}\log |e_L|_h^2=
    \underbrace{\frac{1}{2}|z_1z_2|^2
    +|z_1|^6
    +|z_2|^6}_{:=\phimod}
    +O(|z|^8).
\]Also, we let $\omega_{\mathrm{FS}}$ be the Fubini--Study metric on $\Ps^2$. Since $L=\mathcal O_{\mathbb P^2}(1)$ is ample, by Remark \ref{sgp}, the localized mild spectral gap condition holds on every open subset of $\mathbb P^2$. The model gap condition holds (see Proposition \ref{prop_goodmodel}). Hence, the conclusion in Theorem \ref{Thm_Loc_Off} applies, and Theorem \ref{Thm_non} gives the Bergman kernels asymptotic for $L^k$ as follows: 
\begin{equation}\label{eq:non-qh-projective asymptotics}
    B_k(p,p)
    =
    \frac{3}{\pi^2}
    \frac{k}{\log k}
    +
    O\left(\frac{k}{(\log k)^2}\right).
\end{equation}

\end{example}
\subsection*{Ideas of the proof:}

We first rescale the local weight of the Hermitian metric
anisotropically and extend the resulting data to $\Cs^n$. We then establish
off-diagonal estimates for the resolvents of the corresponding Kodaira
Laplacians on $\Cs^n$. These estimates allow us to localize the global
resolvents $(\zeta-\Box_k)^{-1}$ for
$\zeta\in\Cs\setminus\R$ after rescaling.

We use the Helffer--Sj\"ostrand formula and the off-diagonal decay of the
resolvents to localize a suitable spectral cut-off $\chi(k^{-r}\Box_k)$ after rescaling. We
then take advantage of the localized mild spectral gap and the model gap
condition to replace the spectral cut-off $\chi(k^{-r}\Box_k)$ by the Bergman projection.
This gives the localization and off-diagonal decay of the Bergman kernel.
It also reduces the problem of the full expansion to the corresponding
problem on $\Cs^n$.

Under the quasi-homogeneity assumption, we introduce symbol spaces
adapted to the anisotropic scaling and follow an approach similar to that
of Hsiao and Savale~\cite{hsiao2022bergman}. We then focus on the case of the decoupled model and show that all the assumptions above hold in this case. In the
non-quasi-homogeneous case, we combine the localization theorem with
Kamimoto's asymptotic results~\cite{kamimoto2004newton}.  

\subsection*{The paper is organized as follows:} Section~\ref{sec_2} fixes the notation
and conventions. Section~\ref{sec_3} focuses on the model operator and
off-diagonal estimates on $\Cs^n$. Section~\ref{sec_4} develops the
rescaling and studies the rescaled and extended Laplacian on $\Cs^n$,
which inherits the off-diagonal estimates established in
Section~\ref{sec_3}. Section~\ref{sec_5} completes the localization and
off-diagonal decay estimates, based on the analysis on $\Cs^n$ in the
previous sections and the two spectral gap assumptions. Section ~\ref{sec_6} establishes the full expansion for the case of quasi-homogeneous model. Section~\ref{sec_de} focuses on the decoupled cases, which is a special case of quasi-homogeneous model.  Section~\ref{sec_b} applies the results to finite holomorphic maps. Section~\ref{sec_non} treats a non-quasi-homogeneous model.  
\subsection*{Acknowledgment}The author sincerely thanks his advisor, Dan Popovici, for his continuous support, patient supervision, timely advice whenever difficulties arose, and careful reading of the manuscript. The author is also grateful to his former advisor, Chin-Yu Hsiao, for careful reading an earlier version of the manuscript and providing detailed comments and suggestions, as well as for valuable advice concerning the analytic methods used in this work. The author also thanks Professor Hsiao for his warm hospitality during the author's visit to Taiwan. Finally, the author thanks Shengxuan Zhou for many discussions related to this work.

\section{Prelimanaries, Notation and Set-up}\label{sec_2}
\subsection{Standard notation}

We first list some standard notation.

\begin{itemize}
    \item We write $\ii:=\sqrt{-1}$, $\N_0:=\{0\}\cup\N$, $\N_0^n:=\bigl\{
            \alpha=(\alpha_1,\ldots,\alpha_n):
            \alpha_j\in\N_0
        \bigr\}$. For $\alpha\in\N_0^n$, we set
    $|\alpha|:=\alpha_1+\cdots+\alpha_n$.

    \item We use $x,y$ for points on manifolds, and $z,w,\xi$ for
    variables in local holomorphic coordinates or on $\Cs^n$. The
    letter $\zeta$ denotes the spectral variable; see \eqref{HS}.

    \item For $\alpha\in\N_0^n$, we write
    \[
        \partial_z^\alpha
        :=
        \left(\frac{\partial}{\partial z_1}\right)^{\alpha_1}
        \cdots
        \left(\frac{\partial}{\partial z_n}\right)^{\alpha_n},
        \qquad
        \partial_{\bar z}^\alpha
        :=
        \left(\frac{\partial}{\partial\bar z_1}\right)^{\alpha_1}
        \cdots
        \left(\frac{\partial}{\partial\bar z_n}\right)^{\alpha_n}.
    \]

    \item Let $X$ be a smooth manifold and let $E\to X$ be a smooth
    vector bundle. We denote by $\Cinf(X,E)$ the space of smooth
    sections of $E$, and by $\Cinf_c(X,E)$ the subspace of smooth sections with compact support. If $E=X\times\Cs$ is the trivial line
    bundle, we write $\Cinf(X)$ and $\Cinf_c(X)$.

    \item In fixed local coordinates near $z_0$, for
    $f\in\Cinf(X)$ and $r\in\N_0$, we use the notation
    \[
        |f(z_0)|_{\mathscr C^r}
        :=
        \max_{\substack{\alpha,\beta\in\N_0^n\\
        |\alpha|+|\beta|\leq r}}
        \left|
            \bigl(\partial_z^\alpha
            \partial_{\bar z}^\beta f\bigr)(z_0)
        \right|.
    \]
    For $f\in\Cinf(X,E)$, the notation
    $|f(z_0)|_{\mathscr C^r}$ is defined after fixing a local frame of
    $E$ near $z_0$.

    \item We denote by $dV:=\left(\frac{\ii}{2}\right)^n
        (dz_1\wedge d\bar z_1)\wedge\cdots\wedge
        (dz_n\wedge d\bar z_n)$
    the standard Lebesgue measure on $\Cs^n$. The Euclidean
    Hermitian form is \mbox{$ \omega_{\mathrm e}:= \frac{\ii}{2}
        \sum_{j=1}^n dz_j\wedge d\bar z_j$} so that $
        \frac{\omega_{\mathrm e}^n}{n!}=dV$.
\end{itemize}
\subsection{Setup and preliminaries}

Let $X$ be a complex manifold of complex dimension $n$, and let $\omega$ be a positive $(1,1)$-form, which induces a Hermitian metric $\la\cdot|\cdot\ra_{\omega}$ on the holomorphic vector bundle $\Lambda^{1,0}TX$. The volume form with respect to $\omega$ is given by $\omega^n/n!$. Let $D$ be a local chart centered at $p\in X$, and let $z=(z_1,\cdots,z_n)$ be holomorphic coordinates. If $\omega=\ii\sum_{i,j=1}^n\omega_{i\bar j}\,dz_i\wedge d\bar z_j$ in $D$, then $\left\langle
\frac{\partial}{\partial z_i},
\frac{\partial}{\partial z_j}
\right\rangle_\omega
=\omega_{i\bar j}.$ The volume form is denoted by $dV_{\omega}:=\omega^n/n!$. Let $\Lambda^{0,q}T^*X$ denote the vector bundle of $(0,q)$-forms.

Let $L\to X$ be a holomorphic line bundle, and let $h$ be a smooth Hermitian metric on $L$. Let $\Cinf(X,L)$ denote the space of smooth sections of $L$ over $X$, and let $\Cinf_c(X,L)$ be the subspace of $\Cinf(X,L)$ consisting of sections with compact support. Given a local holomorphic frame $s:D\subset X\to L$, we define the associated local weight function of $h$ by $|s(z)|_h^2=e^{-2\varphi(z)}$, where $\varphi\in\Cinf(D,\R)$. Let $\ii\Theta_h(L)$ be the Chern curvature of $L$, which is locally given by $\ii\Theta_h(L)=2\ii\partial\bar\partial\varphi$. Note that $h$ and $\la\cdot|\cdot\ra_\omega$ induce a Hermitian metric on the vector bundle ${\Lambda^{0,q}T^*X}\otimes L$, with $q\in\{0,1,\cdots,n\}$, denoted by $\la\cdot|\cdot\ra_{h,\omega}$. The $L^2$-Hermitian inner products on the spaces $\Cinf_c(X,{\Lambda^{0,q}T^*X})$ and $\Cinf_c(X,{\Lambda^{0,q}T^*X}\otimes L)$ are given by
\begin{equation}\label{set:inner_product}
    \begin{split}
        (u|v)_{\omega}
        &=
        \int_X\la u(\cdot)|v(\cdot)\ra_{\omega}\,dV_\omega,
        \quad
        u,v\in\Cinf_c(X,{\Lambda^{0,q}T^*X});
        \\
        (u|v)_{h,\omega}
        &=
        \int_X\la u(\cdot)|v(\cdot)\ra_{h,\omega}\,dV_\omega,
        \quad
        u,v\in\Cinf_c(X,{\Lambda^{0,q}T^*X}\otimes L).
    \end{split}
\end{equation}
We denote the corresponding norms by $\|u\|^2_{h,\omega}:=(u|u)_{h,\omega}$ and $\|v\|_{\omega}^2:=(v|v)_{\omega}$. Let $L^2_{\omega}(X,{\Lambda^{0,q}T^*X})$ and $L^2_{h,\omega}(X,{\Lambda^{0,q}T^*X}\otimes L)$ denote the completions of $\Cinf_c(X,{\Lambda^{0,q}T^*X})$ and $\Cinf_c(X,{\Lambda^{0,q}T^*X}\otimes L)$, respectively.


Let
$$
\db^*_{h,\omega}:
\Cinf(X,\Lambda^{0,1}T^*X\otimes L)
\longrightarrow
\Cinf(X,L)
$$
be the formal adjoint of the $L$-valued $\db$-operator
$$
\db:\Cinf(X,L)\longrightarrow
\Cinf(X,\Lambda^{0,1}T^*X\otimes L)
$$
with respect to the inner product $(\cdot|\cdot)_{h,\omega}$. We denote the Kodaira Laplacian, or the $L$-valued $\db$-Laplacian, by
$$
\Box_{h,\omega}
:=
\db^*_{h,\omega}\db:
\Cinf(X,L)\longrightarrow\Cinf(X,L).
$$
We consider the Gaffney extension (cf.~\cite{gaffney1955hilbert})
\begin{equation}\label{set:Box}
  \Box_{h,\omega}:\Dom\Box_{h,\omega}\longrightarrow L^2_{h,\omega}(X,L),
\end{equation}
which is a self-adjoint operator. For an explicit description of \(\Dom\Box_{h,\omega}\), we refer the readers to, e.g., \cite[\S~2.3]{hsiao2014asymptotics}. If $X$ is complete, the operator $\Box_{h,\omega}$ initially defined on $\Cinf_c(X,L)$ is essentially self-adjoint; see \cite[Corollary~3.3.4]{ma2007holomorphic}. For any $\zeta\in\Cs\setminus\R$, we consider the resolvent of $\Box_{h,\omega}$ at $\zeta$, denoted by
\begin{equation}\label{set:res}
(\zeta-\Box_{h,\omega})^{-1}:
L^2_{h,\omega}(X,L)
\longrightarrow
\Dom\Box_{h,\omega}
\subset
L^2_{h,\omega}(X,L),
\end{equation}
which is a bounded operator satisfying (see \cite{davies1995spectral})
\[
\|(\zeta-\Box_{h,\omega})^{-1}\|_{L^2_{h,\omega}\to L^2_{h,\omega}}
\leq
|\operatorname{Im}\zeta|^{-1}.
\]
Here, $\|\cdot\|_{L^2_{h,\omega}\to L^2_{h,\omega}}$ denotes the operator norm.


We now introduce the Helffer--Sj\"ostrand formula \cite{helffer2005equation}. Let $f\in\Cinf_c(\R)$, and let $\widetilde f\in\Cinf_c(\Cs)$ be an almost analytic extension of $f$, namely, $\widetilde f|_{\R}=f$, and for every $N\in\N$, there exists a constant $C_N>0$ such that
\begin{equation}\label{eq:HS_aa}
    \left|
    \frac{\partial\widetilde f}{\partial\bar\zeta}(\zeta)
    \right|
    \leq
    C_N|\operatorname{Im}\zeta|^N,
    \quad
    \text{for }\zeta\in\Cs.
\end{equation}
Then, the functional calculus $f(\Box_{h,\omega})$ of $\Box_{h,\omega}$ is given by the Helffer--Sj\"ostrand formula:
\begin{equation}\label{HS}
    f(\Box_{h,\omega})
    =
    \frac{1}{2\pi\ii}
    \int_{\Cs}
    (\zeta-\Box_{h,\omega})^{-1}\,
    d\zeta\wedge\db\widetilde f,
\end{equation}
where $\db\widetilde f=\frac{\partial\widetilde f}{\partial\bar\zeta}\,d\bar\zeta$ is a $(0,1)$-form on $\Cs$.

We denote the Bergman projection by
$$
B_{h,\omega}:
L^2_{h,\omega}(X,L)
\longrightarrow
\Ker\Box_{h,\omega}.
$$
Let $B_{h,\omega}(x,y)\in\Cinf(X\times X,L\boxtimes\Bar L)$ denote the Bergman kernel, where $L\boxtimes\Bar L$ is a vector bundle over $X\times X$ whose fiber at $(x,y)\in X\times X$ is $L_x\otimes\Bar L_y$. It satisfies
\begin{equation}\label{Bh}
    B_{h,\omega}u(x)
    :=
    \int_X
    \la B_{h,\omega}(x,y)|u(y)\ra_h
    \frac{\omega^n(y)}{n!},
\end{equation}
where $\la B_{h,\omega}(x,y)|u(y)\ra_h$ denotes the fiberwise pairing induced by $\la\cdot|\cdot\ra_h$.

In the case $X=\Cs^n$, let $L\simeq\Cs^n\times\Cs$ be the trivial line bundle. We fix the unit section $\mathbf 1$ as a frame, with $\mathbf 1(z)=(z,1)$, and assume
\begin{equation*}
    \la\mathbf 1|\mathbf 1\ra_h=e^{-2\varphi}.
\end{equation*}
In this case, we write $(\cdot|\cdot)_{\varphi,\omega}:=(\cdot|\cdot)_{h,\omega}$ and $\|\cdot\|_{\varphi,\omega}:=\|\cdot\|_{h,\omega}$. We also set
$$
L^2_{\varphi,\omega}
\bigl(\Cs^n,T^{(0,q),*}\Cs^n\bigr)
:=
L^2_{h,\omega}
\bigl(\Cs^n,T^{(0,q),*}\Cs^n\otimes L\bigr).
$$
When $q=0$, we simply write $L^2_{\varphi,\omega}(\Cs^n)$. This is equivalent to considering the weighted $L^2$-space on $\Cs^n$ with inner product
\begin{equation}\label{set:inner_product2}
(f|g)_{\varphi,\omega}
=
\int_{\Cs^n}
f\overline g\,e^{-2\varphi}\,dV_{\omega},
\quad
f,g\in L^2_{\varphi,\omega}(\Cs^n).
\end{equation}
We also denote $\db^*_{\varphi,\omega}:=\db^*_{h,\omega}$, $\Box_{\varphi,\omega}:=\Box_{h,\omega}$, and $B_{\varphi,\omega}:=B_{h,\omega}$. Note that
\begin{equation}\label{Bk2}
B_{\varphi,\omega}u(z)
=
\int_{\Cs^n}
B_{\varphi,\omega}(z,w)u(w)e^{-2\varphi(w)}\,dV_{\omega}(w).
\end{equation}All the settings above also apply to the adjoint setting by replacing $L$ with $L\otimes K_X$, and we use the same notation if no confusion arises.

\section{Model operator and off-diagonal estimate}\label{sec_3}
In this section, we establish off-diagonal estimates for the resolvents of the Kodaira Laplacian on $\Cs^n$. We work with the weight function $\phimodk$ in Definition \ref{def:model-gap} and a flat Hermitian $(1,1)$-form. In the model coordinate $D$, we write the Hermitian $(1,1)$-form $\omega$ locally as $
\omega(z)=\ii\sum_{i,j=1}^n\omega_{i,j}(z)dz_i\wedge d\bar z_j$, $z\in D$. We consider the trivial line bundle $\Cs^n\times \Cs$ over $\Cs^n$. We define the flat Hermitian \((1,1)\)-form obtained by freezing the coefficients of \(\omega\) at the origin:
\begin{equation}
\omega_0:=\ii\sum_{i,j=1}^n\omega_{i,j}(0)dz_i\wedge d \bar  z_j,\quad z\in \Cs^n.
\end{equation}
 Recall that  $L^2_{\omega_0}(\Cs^n)$ and $L^2_{\phimodk,\omega_0}(\Cs^n)$ are the Hilbert space induced by $(\cdot|\cdot)_{\omega_0}$ and $(\cdot|\cdot)_{\phimodk,\omega_0}$, respectively (see \eqref{set:inner_product},\eqref{set:inner_product2}). Denote the Gaffney extension of the Kodaira Laplacian as \begin{equation}\label{Boxmodk}
\Boxmodk:=\Box_{\phimodk,\omega_0}=\db^*_{\phimodk,\omega_0}\db:\Dom\Boxmodk\subset L^2_{\phimodk,\omega_0}\to L^2_{\phimodk,\omega_0}.   
 \end{equation}
Here, $\db^*_{\phimodk,\omega_0}$ is the formal adjoint of  $\db$ with respect to the inner product $(\cdot|\cdot)_{\phimodk,\omega_0}$. Recall that if $\omega_0=\omega_e$, one has $\Boxmodk=\Box_{\phimodk}$. We consider the unitary identification
\begin{equation}\label{eq:uni_model}
U_k:L^2_{\phimodk,\omega_0}(\Cs^n,\Lambda^{(0,q)}T^*\Cs^n)\longrightarrow L^2_{\omega_0}(\Cs^n,\Lambda^{(0,q)}T^*\Cs^n),
\qquad
f\longmapsto e^{-\phimodk}f,
\end{equation}where $q=0,1$.
Under this identification, we transform the $\db$-operator as shown in the commutative diagram:
\begin{equation}\label{UU}
    \begin{tikzcd}[column sep=huge, row sep=large]
\Dom(\db)\subset L^2_{\phimodk,\omega_0}(\Cs^n)
\arrow[r,"\db"]
\arrow[d,"U_k"',"\simeq"]
&
L^2_{\phimodk,\omega_0}
(\Cs^n,\Lambda^{0,1}T^*\Cs^n)
\arrow[d,"U_k","\simeq"']
\\
\Dom(\Dk)\subset L^2_{\omega_0}(\Cs^n)
\arrow[r,"\Dk"]
&
L^2_{\omega_0}
(\Cs^n,\Lambda^{0,1}T^*\Cs^n),
\end{tikzcd}
\end{equation}
where
$\Dk
:=U_k\db U_k^{-1}
=e^{-\phimodk}\db e^{\phimodk}
=\db+(\db\phimodk)\wedge\cdot $. The unitary relation gives more operators with domain in $L^2_{\omega_0}(\Cs^n)$ as follows: \begin{equation}\label{eq:uni_model_op} \begin{split} \db^*_{\phimodk,\omega_0}\,&\longleftrightarrow \,\Dk^*:=e^{-\phimodk}\db^*_{\phimodk,\omega_0} e^{\phimodk}=\db^*_{\omega_0}+(\db\phimodk \,\wedge\cdot )^* ;\\ \Boxmodk\,&\longleftrightarrow\, \Hmodk:=e^{-\phimodk}\Boxmodk e^{\phimodk}=\Dk^*\,\Dk. \end{split} \end{equation}
Here, $\db^*_{\omega_0}$ denotes the formal adjoint of $\db$ with respect
to the inner product $(\cdot|\cdot)_{\omega_0}$. We consider the Gaffney extension of $\Hmodk$,
\begin{equation}\label{Hk}
\Hmodk:\Dom(\Hmodk)\subset L^2_{\omega_0}(\Cs^n)
\longrightarrow L^2_{\omega_0}(\Cs^n).
\end{equation}
This extension is self-adjoint. Therefore, for any $\zeta\in\Cs\setminus\R$, we define the resolvent\[\Rk:=(\zeta-\mathcal{H}_k)^{-1}:L^2_{\omega_0}(\Cs^n)\to \Dom\Hk.\]By the unitary relation, this equals $e^{-\phimodk}(\zeta-\Boxmodk)^{-1}e^{\phimodk}$. Let $\Rk(z,w)$ be the Schwartz kernel of $\Rk$. This means for any $u\in L^2_{\omega_0}$, we have\[\Rk u(z) = \int_{\Cs^n}\Rk(z,w)u(w)dV_{\omega_0}(w).\]
In the quasi-homogeneous case (see (ii) in Definition \ref{def:model-gap}), we have $\phimodk=\phimod$, and we let:\[
\Rmod:=\Rk,
\]whose Schwartz kernel is $\Rmod(z,w)$. 

Since $\mathcal{H}_k$ is an elliptic operator, it is a standard fact that $\Rk(z,w)$ is smooth when $z \neq w$. In the rest of this section, our main goal is to estimate the $\mathcal{C}^r$-norms of this kernel as $|z-w|\to \infty$.

For $f,g\in\Cinf(\Cs^n)$ and
$u\in\Cinf(\Cs^n,\Lambda^{0,1}T^*\Cs^n)$, the
Leibniz identities give:
\begin{equation}\label{eq:LB}
\Dk(fg)
=
(\db f)\,g+f\,\Dk g,
\qquad
\Dk^*(fu)
=
\underbrace{[\Lambda_\omega,-\ii\partial f\wedge\cdot]u}_{:= V_f u}
+
f\,\Dk^*u.
\end{equation}
Here, $\Lambda_\omega$ denotes the formal adjoint of the Lefschetz operator
$L_\omega:=\omega\wedge\cdot$; see, e.g.,
\cite[Lemma~2.7]{popovici2025twisted} for the second identity.

\begin{theorem}[Off-diagonal estimate]\label{Thm1}For any $N,r\in\N$ and a bounded domain $U\Subset\Cs$, there exists $\ell=\ell(r,U)\in\N$ and $C=C(N,r,U)>0$ such that  \begin{equation*}
    \Big| \Rk(z,w)\Big|_{\mathscr{C}^r}\leq \dfrac{C}{|\operatorname{Im }\zeta|^{N+1}}\cdot \dfrac{(1+|z|+|w|+k)^{\ell}}{|z-w|^N},
\end{equation*}for all $|z-w|\geq 1$, $k\in\N$ and $\zeta\in U\setminus\R$. In particular, we can omit the factor $k$ in the quasi-homogeneous case $\phimodk=\phimod$. 
\end{theorem}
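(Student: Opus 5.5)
The plan is a commutator argument with cut-offs, iterated $N$ times and then upgraded to pointwise $\mathscr C^r$ bounds by local elliptic regularity.

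\textbf{Setup.} Fix $z_0,w_0$ with $|z_0-w_0|=:d\geq 1$. Choose cut-offs $\chi,\tilde\chi\in\Cinf_c(\Cs^n)$ as follows:
\begin{itemize}
\item $\chi\equiv1$ near $B(z_0,1/4)$ and $\tilde\chi\equiv1$ near $B(w_0,1/4)$;
\item the supports are separated by distance $\gtrsim d$;
\item we interpolate with a chain of cut-offs $\chi_0=\chi,\chi_1,\dots,\chi_N$, where $\chi_{j+1}\equiv1$ on $\supp\chi_j$, $\supp\chi_N\cap\supp\tilde\chi=\varnothing$, and $|\nabla^m\chi_j|\lesssim d^{-m}$.
\end{itemize}
These come from rescaling a fixed bump by $d$, so that all derivatives of the commutator coefficients gain a factor $d^{-1}$.

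\textbf{Commutator iteration.} The key identity is
\[
[\mathcal H_k,f]=\Dk^*(\db f\wedge\cdot)+V_f\Dk,
\]
which follows from the Leibniz rules \eqref{eq:LB}. The right-hand side is a first-order operator whose coefficients are derivatives of $f$. Since $\chi_j\tilde\chi=0$, the resolvent identity gives
\[
\chi_j\Rk\tilde\chi=\chi_j\Rk[\mathcal H_k,\chi_{j+1}]\Rk\tilde\chi,
\]
because $\chi_j\chi_{j+1}=\chi_j$ and $\chi_{j+1}\tilde\chi=0$. Iterating $N$ times expresses $\chi\Rk\tilde\chi$ as a product of $N+1$ resolvents interlaced with commutators, each carrying a factor $d^{-1}$.

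\textbf{$L^2$ bounds.} Each commutator is composed with a resolvent and must be bounded in $L^2$.
\begin{itemize}
\item The terms $\Dk\Rk$ are controlled by $\|\Dk\Rk u\|^2=(\mathcal H_k\Rk u|\Rk u)\lesssim(1+|\zeta|)|\operatorname{Im}\zeta|^{-2}\|u\|^2$.
\item The $\Dk^*$ terms are handled by duality, moving $\Dk^*$ onto the left resolvent as $(\Dk\Rk(\bar\zeta))^*$.
\end{itemize}
This yields $\|\chi\Rk\tilde\chi\|_{L^2\to L^2}\le C|\operatorname{Im}\zeta|^{-N-1}d^{-N}$ for $\zeta\in U$, and no polynomial weights enter at this stage.

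\textbf{Pointwise $\mathscr C^r$ bounds.} To pass from $L^2$ to $\mathscr C^r$ I use Sobolev embedding combined with interior elliptic estimates for $\mathcal H_k$ on unit balls. Since $\mathcal H_k-\zeta$ applied to $\Rk\tilde\chi u$ vanishes near $z_0$, and similarly in $w$ by the adjoint $\Rk^*=\mathcal R_k(\bar\zeta)$, we get
\[
\|v\|_{H^{s}(B(z_0,1/8))}\le C_s\,(1+|z_0|+k)^{\ell_s}\,\|v\|_{L^2(B(z_0,1/4))}.
\]
This holds because $\mathcal H_k=\Delta_{\omega_0}+(\text{first-order terms with coefficients }\partial\phimodk)+|\db\phimodk|^2+(\partial\db\phimodk)$, and $\phimodk$ is a polynomial in $z$ whose coefficients grow polynomially in $k$. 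In the quasi-homogeneous case the coefficients are independent of $k$. Bootstrapping in both variables, with $\zeta$ ranging in bounded $U$ (the $\zeta$-dependence only adds harmless powers), gives the kernel bound with $\ell=\ell(r,U)$.

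\textbf{Main obstacle.} I expect the hardest step to be this elliptic bootstrap. The coefficients of $\mathcal H_k$ are unbounded, so the interior estimates have to be tracked carefully enough that constants grow only polynomially in $|z|+|w|+k$. Each differentiation produces derivatives of $\phimodk$ up to a fixed order, which are polynomial, so iterating $r+2n$ times gives an explicit $\ell$. I would also avoid letting the number of needed derivatives depend on $N$, by performing the commutator iteration purely at the $L^2$ level first.
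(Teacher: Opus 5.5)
Your approach works, with one omission noted below. The step from an $L^2$ bound on $\tilde\chi_z\mathcal R_k(\zeta)\tilde\chi_w$ to the pointwise $\mathscr C^r$ bound is exactly what the paper does: interior elliptic estimates with polynomial loss in $1+|z|+|w|+k$, Sobolev embedding, and duality in $w$.

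The difference is in how you get the $L^2$ decay.
\begin{itemize}
\item \textbf{The paper} proves its Claim by an energy argument. It bounds $\|\chi^{(j)}_{z,w}\mathcal R_k(\zeta)\chi_wu\|^2$ by $|\operatorname{Im}\zeta|^{-1}\bigl|\zeta\|\cdot\|^2-\|\mathcal D_k\cdot\|^2\bigr|$ and integrates by parts with \eqref{eq:LB}. It alternates this with the analogous estimate for $\|\chi^{(j)}_{z,w}\mathcal D_k\mathcal R_k(\zeta)\chi_wu\|^2$. Each step gains $(|\operatorname{Im}\zeta|\,|z-w|)^{-1}$ on the squared norm, and the argument stays at the level of quadratic forms.
\item \textbf{You} write $\chi\mathcal R_k(\zeta)\tilde\chi=\pm\chi\mathcal R_k(\zeta)[\mathcal H_k,\chi_1]\mathcal R_k(\zeta)\cdots[\mathcal H_k,\chi_N]\mathcal R_k(\zeta)\tilde\chi$. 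To iterate you need $[\mathcal H_k,\chi_j]=[\mathcal H_k,\chi_j]\chi_{j+1}$, which holds by locality. This route is more transparent and gains a full $d^{-1}$ per step in the norm itself. It also shows directly why the weight does not enter, since $\mathcal D_k$ and $\mathcal D_k^*$ are never expanded. The price is some operator-theoretic bookkeeping that the quadratic-form route avoids.
\end{itemize}

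One piece of that bookkeeping is missing. Expand each commutator as $\mathcal D_k^*(\bar\partial\chi_j\wedge\cdot)+V_{\chi_j}\mathcal D_k$. A resolvent sitting between $V_{\chi_j}\mathcal D_k$ and $\mathcal D_k^*(\bar\partial\chi_{j+1}\wedge\cdot)$ then becomes $\mathcal D_k\mathcal R_k(\zeta)\mathcal D_k^*$. Neither of the two bounds you list (for $\mathcal D_k\mathcal R_k$ and for $\mathcal R_k\mathcal D_k^*$) covers this.

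The operator is nonetheless bounded. For $v=\mathcal R_k(\zeta)\mathcal D_k^*g$ one has $\|\mathcal D_kv\|^2=\zeta\|v\|^2-(g|\mathcal D_kv)$. Combined with your duality bound on $\mathcal R_k(\zeta)\mathcal D_k^*$, this gives $\|\mathcal D_k\mathcal R_k(\zeta)\mathcal D_k^*\|\lesssim|\operatorname{Im}\zeta|^{-1}$ for $\zeta\in U$. Alternatively, polar decomposition and the functional calculus give the bound $1+|\zeta|/|\operatorname{Im}\zeta|$.

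With this in hand, each of the $N+1$ resolvent slots costs at most $|\operatorname{Im}\zeta|^{-1}$, and your bound $|\operatorname{Im}\zeta|^{-N-1}d^{-N}$ holds. For $d$ below a constant depending on $N$, where the nested cut-offs cannot be fitted with gradients $\lesssim d^{-1}$, the trivial bound $\|\mathcal R_k(\zeta)\|\le|\operatorname{Im}\zeta|^{-1}$ suffices.
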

\begin{proof}
    Let $\chi\in\Cinf_c(\Cs^n,[0,1])$ be a cut-off function with $\chi=1$ for $|z|<\frac{1}{8}$ and $\chi=0$ for $|z|\geq \frac{1}{4}$. For any $z,w\in\Cs^n$ with $|z-w|\geq 1$, we denote $\chi_z:=\chi(\bullet-z)$ and $\chi_w:=\chi(\bullet-w)$. 
    
    Note that the coefficients of $\Hmodk$, together with all their derivatives,
have polynomial growth in the spatial variables and in $k$. Namely,  $\Cr$-norms of the coefficients of $\Hk$ on the ball $\{w\in\Cs^n;|w-z|<1\}$ are bounded by
$C_r(1+|z|+k)^{M_r}$, for some $C_r,M_r>0$. Moreover, the principal part of $\Hmodk$ is
uniformly elliptic. Hence, the standard interior elliptic estimate,
which is applied to $(\zeta-\Hmodk)^q$, is uniform for $\zeta$ in the bounded set $U$ up to a polynomial loss in $|z|+k$. More precisely, for every $\mu\geq0$ and $\nu\in\N$ with
$2\nu>\mu$, there exist $\ell=\ell(\mu,\nu,U)\in\N$ and
$C=C(\mu,\nu,U)>0$ such that
 \begin{equation}\label{e1}
    \|\chi_zu\|_{H^\mu} \leq C (1+|z|+k)^{\ell}\, \left(\|\widetilde{\chi}_z u\|_{L^2}+\|\widetilde{\chi}_z(\zeta-\Hmodk)^\nu u\|_{L^2}\right),
 \end{equation}where $\|\cdot\|_{H^\mu}$ is the standard Sobolev norm. Here, $\widetilde{\chi}_z$ is another cut-off function such that $\supp\chi_z\subset\{\widetilde{\chi}_z=1\}$ and $\supp\widetilde{\chi}_z\subset \{\xi;|\xi-z|<1/2\}$. 
 
 Fix $r\in\mathbb N$ and choose $\mu>n+r$. The Sobolev embedding gives $H^\mu(\mathbb C^n)\hookrightarrow \mathscr C^r(\mathbb C^n)$.  Moreover, for $|\beta|\leq r$, the distributions
$\partial^\beta\delta_w$ belong to $H^{-\mu}(\mathbb C^n)$ with norms
uniformly bounded in $w$. Therefore, there are some $C,\widetilde{C}>0$ such that 
\begin{equation*}
|\Rk(\zeta)(z,w)|_{\mathscr C^r}
\leq C \sum_{|\alpha|+|\beta|\leq r} \|(\partial^\alpha\delta_z)\Rk (\partial^\beta\delta_w) \|_{L^2\to L^2}\leq 
\widetilde C\|\chi_z\Rk(\zeta)\chi_w\|_{H^{-\mu}\to H^\mu},\end{equation*}
where $\|\cdot\|_{H^{-\mu}\rightarrow H^{\mu}}$ is the operator norm between the Sobolev spaces $H^{-\mu}$ and $H^{\mu }$. Since $\supp\Tilde{\chi}_z\cap\supp\chi_w=\varnothing$, we have  $\Tilde\chi_z(\zeta-\Hmodk)^m\Rk\chi_w\equiv0$ for any $m\in\N$. Combining this fact with \eqref{e1}, we estimate $\|\chi_z\Rk(\zeta)\chi_w\|_{H^{-\mu}\to H^\mu}$ as follows:   \[
\|\chi_z{\Rk}\chi_w u\|_{H^\mu}\leq C(1+|z|+k)^{\ell(\mu,U)}\|\widetilde{\chi}_z{\Rk}\chi_w u\|_{L^2}.\]Applying the same argument to the adjoint operator $\bar\zeta-\Hmodk$, or equivalently by duality in $w$-variable, we get \[
\|\widetilde\chi_z\Rk  \chi_w\|_{H^{-\mu}\rightarrow L^2}\leq C(1+|w|+k)^{\ell(\mu,U
)}\|\widetilde{\chi}_z{\Rk}
\widetilde{\chi}_w\|_{L^2\rightarrow L^2}.\]Therefore, for $|z-w|\geq 1$, \begin{equation}\label{e2}
 \lvert\Rk(z,w)\rvert_{\Cr}\leq  C(1+|z|+|w|+k)^{\ell(r,U)}\|\widetilde{\chi}_z{\Rk}\widetilde{\chi}_w\|_{L^2\rightarrow L^2}.\end{equation}
To complete the proof, it remains to show the following claim: 
\begin{claim}For any $N\in\N$, there exists a constant $C=C(N)>0$ independent of $z$ and $w$ such that
 \begin{equation}
 \|{\chi}_z{\Rk}{\chi}_w\|^2_{L^2\rightarrow L^2}\leq C|\operatorname{Im} \zeta|^{-N-1}\cdot|z-w|^{-N},    
 \end{equation}for all $\zeta\in U\setminus\R$ and $|z-w|\geq 1$.   
\end{claim}

We first set the nested sequence of cut-off functions. Construct $\chi_{j}\in\Cinf_c(\Cs^n,[0,1])$, $j\in\N$ such that $\chi_{1}=\chi$ and $\supp\chi_j\subset\{\chi_{j+1}=1\}$ and $\supp\chi_j\subset \{|\xi|<\frac{1}{2}\}$ for all $j\in\N$. Denote \[
\chi^{(j)}_{z,w}:=\chi_j(\dfrac{\bullet-z}{|z-w|}).
\]Observe that $\supp \chi^{(j)}_{z,w}\cap \chi_w=\varnothing$, and $\sup|\nabla\chi^{(j)}_{z,w}|\leq \sup|\nabla\chi_j|\cdot|z-w|^{-1}$ for all $j\in\N$. Since $\supp\chi_1\subset\{\chi_{z,w}^{(1)}=1\}$, we have  $\|\chi_z\Rk\chi_w\|_{L^2\To L^2}\leq \|\chi_{z,w}^{(1)}\Rk\chi_{w}\|_{L^2\rightarrow L^2}$. 

Since $\|\chi_{z,w}^{(1)}{\Rk\chi_{w}u}\|$ and $\|\chi_{z,w}^{(1)}\Dk \Rk \chi_{w}u\|$ are both real numbers, it is straightforward to see that for all  $\zeta\in U\subset \Cs\setminus\R$ and $u\in L^2$,  
\begin{equation}\label{eq:0}
  \|\chi_{z,w}^{(1)}{\Rk\chi_{w}u}\|^2_{\omega_0}\leq\frac{1}{|\operatorname{Im }\zeta|}\Big|\zeta\cdot\|\chi_{z,w}^{(1)}\Rk\chi_{w}u\|^2_{\omega_0}-\|\chi_{z,w}^{(1)}\Dk \Rk \chi_{w}u\|^2_{\omega_0}\Big|.  
\end{equation}
 The last term can be computed by integration by parts and \eqref{eq:LB} as follows:
\begin{multline}\label{eq:1}
 \|\chi_{z,w}^{(1)}\Dk \Rk \chi_{w}u\|^2_{\omega_0}=\left(\Dk^*(\chi_{z,w}^{(1)})^2\Dk \Rk \chi_{w}u\Big|\Rk \chi_{w}u\right)_{\omega_0}\\
 =\left(\chi_{z,w}^{(1)}\Hmodk \Rk \chi_{w}u\Big|\chi_{z,w}^{(1)}\Rk \chi_{w}u\right)_{\omega_0}+\left(V_{({\chi_{z,w}^{(1)}})^2}\Dk \Rk \chi_{w}u\Big|\Rk \chi_{w}u\right)_{\omega_0},
\end{multline}where the second term of the second line is dominated as: \begin{equation*}
    \begin{split}
    \Big|\left((V_{({\chi_{z,w}^{(1)}})^2}\Dk \Rk \chi_{w}u \Big|\Rk \chi_{w}u\right)\Big|
    &\leq \|(V_{({\chi_{z,w}^{(1)}})^2}\Dk\Rk\chi_wu\|\|\chi^{(2)}_{z,w}\Rk\chi_wu\|\\ 
    &\leq \frac{C(\chi_1)}{|z-w|}\|\chi^{(2)}_{z,w}\Dk\Rk\chi_wu\|\cdot\|\chi^{2}_{z,w}\Rk\chi_wu\|.     
    \end{split}
\end{equation*}
Here, we use the Cauchy-Schwarz inequality and the fact that \[|V_{({\chi_{z,w}^{(1)}})^2}\eta|_\omega=|[\Lambda_{\omega_0},-\ii\partial (\chi_{z,w}^{(1)})^2\wedge\cdot]\,\eta|_{\omega_0}\leq C(\chi_1) |z-w|^{-1}|\eta|_{\omega_0},\] for some $C(\chi_1)>0$ and $\eta\in \Lambda^{0,1}T^{*}\Cs^n$. By combining this with  \eqref{eq:0},  \eqref{eq:1} , we have 
\begin{multline}\label{eq:1.5}
    \|\chi^{(1)}_{z,w}\Rk\chi_wu\|^2\leq \frac{1}{|\operatorname{Im}\zeta|}\Big|\left(\chi^{(1)}_{z,w}(\zeta-\Hmodk)\Rk\chi_wu|\chi^{(1)}_{z,w}\Rk\chi_wu\right)\Big|\\+\frac{C(\chi_1)}{|\operatorname{Im}\zeta|\cdot|z-w|}(\|\chi^{(2)}_{z,w}\Dk\Rk\chi_wu\|^2+\|\chi^{(2)}_{z,w}\Rk\chi_wu\|^2).
\end{multline}Since $\supp\chi^{(1)}_{z,w}\cap\supp\chi_w=\varnothing$ and $(\zeta-\Hmodk)\Rk=\Id$, the first term on the right-hand side above vanishes. 

Inductively, for every $j\in\N$, the same argument gives the following estimate. For each $j\in\N$, 
\begin{equation}\label{eq:2}
 \|\chi_{z,w}^{(j)}\Rk\chi_{w}u\|^2\leq\dfrac{C(\chi_j)}{|\operatorname{Im }\zeta|\cdot|z-w|}\left(\|\chi_{z,w}^{(j+1)}\Rk\chi_{w}u\|^2_{\omega_0}+\|\chi_{z,w}^{(j+1)}\Dk\Rk\chi_{w}u\|^2_{\omega_0}\right).   
\end{equation}Moreover, observe that the terms on the right-hand side above are bounded in the sense that $\|\chi_{z,w}^{(j+1)}\Rk\chi_{w}u\|_{\omega_0}\leq |\operatorname{Im}\zeta|^{-1}\|u\|_{\omega_0}$ and \begin{multline*}
  \|\chi_{z,w}^{(j+1)}\Dk\Rk\chi_{w}u\|^2_{\omega_0}\leq \|\Dk\Rk\chi_{w}u\|^2_{\omega_0}\\ = (\Hmodk\Rk\chi_wu|\Rk\chi_wu)_{\omega_0}\leq \|(\zeta\Rk-\Id)\chi_wu\|_{\omega_0}^2+\|\Rk\chi_wu\|^2_{\omega_0}\leq \frac{C(U)}{|\operatorname{Im}\zeta|^2}\|u\|^2_{\omega_0},  
\end{multline*}for some $C(U)>0$. Hence, by iterating the estimates \eqref{eq:2}, the proof of the claim reduces to showing the following: 

For each $j\in\N$,  
\begin{equation}\label{eq:3}
    \|\chi^{(j)}_{z,w}\Dk\Rk\chi_w u\|^2_{\omega_0}\leq \dfrac{C(\chi_j,U)}{|\operatorname{Im}\zeta|\cdot|z-w|}\left(
    \|\chi^{(j+1)}_{z,w}\Rk\chi_w\|^2+\|\chi^{(j+1)}_{z,w}\Dk\Rk\chi_wu\|^2\right),
\end{equation}for some $C(\chi_j,U)>0$. To show the estimate above, we apply an argument similar to the one in \eqref{eq:0} by noting that \begin{equation}\label{eq:4}
   \|\chi^{(j)}_{z,w}\Dk\Rk\chi_w u\|^2_{\omega_0}\leq \frac{1}{|\operatorname{Im}\zeta|}\Big|\zeta\cdot\|\chi^{(j)}_{z,w}\Dk\Rk\chi_w u\|^2_{\omega_0}-\|\chi^{(j)}_{z,w}\underbrace{\Dk^*\Dk}_{=\Hmodk}\Rk\chi_w u\|^2\Big|.
\end{equation}As in \eqref{eq:1}, we compute the second term by 
\begin{multline}\label{eq:5}
\|\chi^{(j)}_{z,w}\Hmodk\Rk\chi_wu\|^2=\left(\chi^{(j)}_{z,w}\Dk\Hmodk\Rk\chi_w u\Big|\chi^{(j)}_{z,w}\Dk\Rk\chi_w u\right)\\+\left((\db(\chi^{(j)}_{z,w})^2)\wedge \Hmodk\Rk\chi_w u\Big|\Dk\Rk\chi_w u\right).
\end{multline}Again, by Cauchy-Schwarz inequality and the fact $|\db\chi^{(j)}_{z,w}|\leq C(\chi_j)|z-w|^{-1}$, we have 
\begin{multline*}
 \Big|\left((\db(\chi^{(j)}_{z,w})^2)\wedge \Hmodk\Rk\chi_w u\Big|\Dk\Rk\chi_w u\right)\Big|\\ \leq \frac{C(\chi_j)}{|z-w|}\|\chi^{(j+1)}_{z,w}\Hmodk\Rk\chi_wu\|\|\chi^{(j+1)}_{z,w}\Dk\Rk\chi_wu\|.   
\end{multline*}
As in \eqref{eq:1.5}, we combine this with \eqref{eq:4} and \eqref{eq:5} to get
\begin{multline}
    \|\chi^{(j)}_{z,w}\Dk\Rk\chi_wu\|^2\leq \frac{1}{|\operatorname{Im}\zeta|}\Big|\left(\chi^{(j)}_{z,w}\Dk(\zeta-\Hmodk)\Rk\chi_wu|\chi^{(j)}_{z,w}\Dk\Rk\chi_wu\right)\Big|\\+\frac{C(\chi_j)}{|\operatorname{Im}\zeta|\cdot|z-w|}(\|\chi^{(j+1)}_{z,w}\Hmodk\Rk\chi_wu\|^2+\|\chi^{(j+1)}_{z,w}\Dk\Rk\chi_wu\|^2).
\end{multline}Since $\chi_{z,w}^{(j)}\Dk(\zeta-\Hmodk)
\Rk\chi_w=0$, the first term on the right-hand side above vanishes. Combing this with the fact that $\|\chi^{(j+1)}_{z,w}\Hmodk\Rk\chi_wu\|\leq C(U) \|\chi^{(j+1)}_{z,w}\Rk\chi_wu\|$ for some $C(U)>0$, we obtain \eqref{eq:3} and hence complete the proof of the claim.
\end{proof}
\begin{remark}\label{Rmk1}
As seen in the proof, the factor $|\operatorname{Im} \zeta|^{-N}|z-w|^{-N}$ in Theorem \ref{Thm1} arises from the $L^2$-estimate of the operator norm of $\chi_z(\zeta-\Hmodk)^{-1}\chi_w$, which is independent of the weight $\phimodk$. On the other hand, the factor $(1+|z|+|w|+k)^{\ell(r,U)}$ is determined by the coefficient growth of the operator $\Hmodk$. More precisely, the exponent $\ell(r,U)$ depends continuously on the $\mathscr{C}^\infty$-norm of the coefficients of $\Hmodk$.
\end{remark}
We denote the Bergman projection by \begin{equation}
\Bmodk:=B_{\phimodk,\omega_0}:L^2_{\phimodk,\omega_0}(\Cs^n)\to \Ker\Boxmodk,
\end{equation}whose Schwartz kernel is $\Bmodk(z,w):=B_{\phimodk,\omega_0}(z,w)$ (see \eqref{Bk2}). Under the unitary relation $\eqref{eq:uni_model}$, we can transform the Bergman projection and its kernel by \begin{equation}\label{123}
    \begin{split}
        \Bmodk  \, &\longleftrightarrow \, \sBmodk\,:=\ e^{-\phimodk}\Bmodk e^{\phimodk}; \\
        \Bmodk(z,w)\, &\longleftrightarrow \, \sBmodk(z,w)\,:=\ e^{-\phimodk(z)}\Bmodk(z,w) e^{\phimodk(w)}.
    \end{split}
\end{equation}Then, $\sBmodk:L^2_{\omega_0}\to \Ker\Hmodk$ is the orthogonal projection. In the \emph{quasi-homogeneous case}, we denote $\Bmod=\Bmodk$ and $\sBmod=\sBmodk$. 

By repeating the arguments in \eqref{e1}--\eqref{e2}, replacing \(\Rk\) by \(\sBmodk\) and \(\zeta-\Hmodk\) by \(\Hmodk\), we obtain, for every \(r\in\N\), \[
|\sBmodk(z,w)|_{\Cr}\leq C(1+|z|+|w|+k)^{\ell(r)}\|\Tilde{\chi}_z \sBmodk \Tilde{\chi}_w\|_{L^2\to L^2},
\]for some $C>0$ and $\ell(r)>0$. Since the operator norms of $\Bmodk$ are bounded by one, we get:
\begin{equation}\label{BB}
|\Bmodk(z,w)|_{\Cr}\leq C(1+|z|+|w|+k)^{\ell(r)}.
\end{equation}

Suppose now that the \emph{model gap condition}
(see Definition~\ref{def:model-gap}) holds, so that we have  $\operatorname{Spec}(\Hk)\subset\{0\}\cup[c,\infty)$, for some $c>0$. We choose $f\in\Cinf_c(\R)$ such that $f(0)=1$ and $\supp f\subset [-\frac{c}{2},\frac{c}{2}]$. Then, \begin{equation}\label{HS_f}
  f(\Hk)=\mathbf 1_{\{0\}}(\Hk)=\sBmodk.  
\end{equation}
Hence, the Helffer--Sj\"ostrand formula (see \eqref{HS}) gives \begin{equation}\label{HS_use}
     \sBmodk(z,w)=\frac{1}{2\pi\ii} \int_{\Cs} (\zeta-\Hmodk)^{-1}(z,w)\,
    d\zeta \wedge \db \widetilde{f}.
\end{equation}This shows that the integral of $(\zeta-\Hmodk)^{-1}(z,w)$ is the Bergman kernel in the sense of distributions. Combining  Theorem~\ref{Thm1}, \eqref{BB} and \eqref{HS_use}, we obtain
the following result.
\begin{theorem}[Off-diagonal estimate for the model Bergman kernel]\label{Thm_off_B_mod}
Assume the model gap condition holds. For all $r,N\in\N$, there exist $\ell=\ell(r)\in\N$ and
$C=C(r,N)>0$ such that
\begin{equation*}
\big|\sBmodk(z,w)\big|_{\mathscr{C}^r}\leq C\,\frac{(1+|z|+|w|+k)^{\ell}}{1+|z-w|^N}
\end{equation*}
for all $k\in\N$ and all $z,w\in\Cs^n$. If the quasi-homogeneous model gap
condition holds, then 
\begin{equation*}
\big|\sBmod(z,w)\big|_{\mathscr{C}^r}\leq C\,\frac{(1+|z|+|w|)^{\ell}}{1+|z-w|^N}.
\end{equation*}
\end{theorem}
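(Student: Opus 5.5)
The plan is to split into the near-diagonal region $|z-w|<1$ and the far region $|z-w|\geq 1$. For $|z-w|<1$ the factor $1+|z-w|^N$ lies between $1$ and $2$. The kernel $\sBmodk(z,w)=e^{-\phimodk(z)}\Bmodk(z,w)e^{\phimodk(w)}$ obeys the polynomial bound recorded just before \eqref{BB}: $|\sBmodk(z,w)|_{\Cr}\leq C(1+|z|+|w|+k)^{\ell(r)}$. This follows from the elliptic estimates \eqref{e1}--\eqref{e2} applied with $\Hmodk$ in place of $\zeta-\Hmodk$, together with $\|\sBmodk\|\leq1$. So this region is immediate.

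For $|z-w|\geq1$ I will use the Helffer--Sj\"ostrand representation \eqref{HS_use}. It is valid because the model gap condition gives $f(\Hk)=\sBmodk$ for a cut-off $f$ with $f(0)=1$ and $\supp f\subset[-\frac c2,\frac c2]$, as in \eqref{HS_f}. Choose the almost analytic extension $\widetilde f$ with support in a fixed bounded set $U\Subset\Cs$. Then $|\partial_{\bar\zeta}\widetilde f(\zeta)|\leq C_M|\operatorname{Im}\zeta|^M$ for every $M$, by \eqref{eq:HS_aa}. Theorem~\ref{Thm1} gives
\[
|\Rk(z,w)|_{\Cr}\leq C|\operatorname{Im}\zeta|^{-N-1}(1+|z|+|w|+k)^{\ell}|z-w|^{-N}
\]
for $\zeta\in U\setminus\R$. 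Taking $M=N+1$ makes the integrand $|\Rk(z,w)|_{\Cr}\,|\partial_{\bar\zeta}\widetilde f|$ bounded on $U$. Integrating over the compact set $\supp\widetilde f$ then gives $|\sBmodk(z,w)|_{\Cr}\leq C(1+|z|+|w|+k)^{\ell}|z-w|^{-N}$, which is comparable to the claimed bound since $|z-w|\geq1$.

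The main obstacle is justifying that the $\Cr$ derivatives in $(z,w)$ commute with the $\zeta$-integral in \eqref{HS_use}. The identity \eqref{HS_use} holds a priori only in the sense of distributions. To handle this, I will pair both sides against test functions $\psi_1(\cdot)\psi_2(\cdot)$, with supports near $z$ and near $w$ respectively and disjoint. On the product of these supports, $\Rk(\cdot,\cdot)$ is smooth, with $\Cr$ bounds that are uniform in $\zeta$ up to the factor $|\operatorname{Im}\zeta|^{-N-1}$. Dominated convergence then shows that the integral $\int\Rk(z,w)\,d\zeta\wedge\db\widetilde f$ converges in $\Cr$ on $\{|z-w|\geq1\}$. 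Since it agrees with $\sBmodk$ as a distribution there, the two coincide as smooth functions. The exponent $\ell$ in the final estimate is the maximum of the $\ell(r)$ from \eqref{BB} and the $\ell(r,U)$ from Theorem~\ref{Thm1}.

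In the quasi-homogeneous case $\phimodk=\phimod$ does not depend on $k$. The operator $\Hk$ is then a single fixed operator, and the coefficient growth in Remark~\ref{Rmk1} involves only $|z|+|w|$. Thus both \eqref{BB} and Theorem~\ref{Thm1} hold without the factor $k$, and the same argument yields the second estimate.
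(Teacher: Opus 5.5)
Your proposal is correct and follows the paper's own route. The polynomial bound \eqref{BB} (read for $\sBmodk$) covers $|z-w|<1$. For $|z-w|\geq1$, the Helffer--Sj\"ostrand representation \eqref{HS_use} combined with Theorem~\ref{Thm1} gives the decay, since $\partial_{\bar\zeta}\widetilde f=O(|\operatorname{Im}\zeta|^{N+1})$ absorbs the resolvent's $|\operatorname{Im}\zeta|^{-N-1}$ loss, and the factor $k$ drops out in the quasi-homogeneous case.

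Your dominated-convergence argument for differentiating under the $\zeta$-integral away from the diagonal makes explicit a step the paper only asserts in the sense of distributions. As in the paper, the identity \eqref{HS_f}, and hence the bound, is really obtained only for $k\geq k_0$, where the model gap is available.
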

\section{Scaling of metrics and Extended Laplacian}\label{sec_4}
In this section, we introduce the rescaling of the local weight and some estimates. We start from the ordinary Taylor expansion on $\Cs^n$, and regroup the terms according to a weight vector. For a weight vector $\Lambda=(\lambda_1,\cdots,\lambda_n)$, define the weighted radius on $\Cs^n$ by  $|z|_{\Lambda}:=\max_{j=1,\cdots,n}|z_j|^{1/{\lambda_j}}$. Also, for $\alpha,\beta\in\N_0^n$, recall that $|(\alpha,\beta)|_\Lambda=\sum\lambda_j(\alpha_j+\beta_j)$. By scaling, we may assume $\{z;|z|_\Lambda<1\}\subset D$. For any $Q\in\Q^+$, \[
 \varphi(z)=\sum_{|(\alpha,\beta)|_{\Lambda}<Q}a_{\alpha,\beta}z^\alpha \bar z^\beta+R_Q(z), \quad |z|_\Lambda<1, 
 \]where $R_Q$ satisfies the remainder estimate: For all  $\gamma,\eta\in\N_0^n$, there exists $C_{N,\gamma,\eta}>0$ such that \[\Big|\partial^\gamma_z\partial^\eta_{\bar z}R_Q(z)\Big|\leq C_{Q,\gamma,\eta}|z|_\Lambda^{Q-|(\gamma,\eta)|_{\Lambda}},\quad\text{for } 0<|z|_\Lambda<1.
 \]Now, we begin rescaling the local weight function $k\varphi$. We recall the scaling map $\Sk$ on $\Cs^n$ defined by $S^{\Lambda}_k(z_1,\cdots,z_n):=(k^{-\lambda_1}z_1,\cdots,k^{-\lambda_n}z_n),\quad \text{for }\,k\in\N$. Then, we have \begin{equation*}
    k\varphi(S^\Lambda_k(z))=\sum_{|(\alpha,\beta)|_\Lambda<Q}k^{1-|(\alpha,\beta)|_\Lambda}c_{\alpha,\beta}z^\alpha\bar z^\beta+R_{Q,k}(z),\quad\text{for }\,|z|_\Lambda<k
\end{equation*}where $R_{N,k}:=kR_N(S_k^\Lambda(z))$ satisfies the remainder estimate: 
 For all $\gamma,\eta\in\N_0^n$, there exists $C_{N,\gamma,\eta}>0$ such that 
 \[
\Big|\partial^\gamma_z\partial^\eta_{\bar z}R_{Q,k}(z)\Big|\leq C_{Q,\gamma,\eta}\,k^{1-Q}|z|_\Lambda^{Q-|(\gamma,\eta)|_\Lambda},\quad\text{for }\, 0<|z|_\Lambda<k. 
 \]Fix a number ${\epsilon_0}$ with \begin{equation*}
    0<{\epsilon_0}<\frac{d_1^2}{(1+d_1)},
 \end{equation*}where $d_1$ is given in \eqref{order}. For $|z|<k^{{\epsilon_0}}$, we have $|z|_\Lambda\leq k^{{\epsilon_0} \lambda_1^{-1}}\leq k^{\epsilon_0 d_1^{-1}}$ since $0<d_1<\lambda_1$. For all $\gamma,\eta\in\N$ and $|z|<k^{{\epsilon_0}}$, we can estimate the remainder by \[
 \Big|\partial^\gamma_z\partial^\eta_{\bar z}R_{Q,k}(z)\Big|\leq C_{Q,\gamma,\eta}\,k^{1-Q+{\epsilon_0}\cdot d_1^{-1}(Q-|(\gamma,\eta)|_\Lambda)}.
 \]Observe that $R_{Q,k}$ remains unchanged when $Q\in (1,1+d_1)$. 
Since $\epsilon_0<d_1^2/(1+d_1)$, 
we may choose
$Q_0\in\mathbb Q\cap(1,1+d_1)$ such that
$1-Q_0+\epsilon_0d_1^{-1}Q_0<0$.
We then have a constant $c_0>0$ satisfying
$1-Q_0+\epsilon_0d_1^{-1}Q_0=-c_0$.
Hence, we obtain the following asymptotic estimate: For all $\gamma,\eta\in\N$ and $|z|<k^{{\epsilon_0}}$, \[
 |\partial^\gamma_z\partial^\eta_{\bar z }(k\varphi(S^\Lambda_k(z))-\varphi^\Lambda_k(z))|\leq k^{-c_0-{\epsilon_0} d_1^{-1}(|\gamma|+|\eta|)}. 
 \] 
 We then write the higher-order expansions. For any $N \in \mathbb{N}$ and multi-indices $\gamma, \eta \in \mathbb{N}_0^n$, 
\begin{equation*}
  \Bigl| \partial_z^\gamma \partial_{\bar{z}}^\eta \Bigl( k \varphi(S^{\Lambda}_k(z)) - \varphi^\Lambda_k(z)-\sum_{j=1}^Nk^{-d_j}\varphi^\Lambda_{j} \Bigr)\Bigr| \leq C_{N,\gamma,\eta}\,k^{-d_{N+1}-{\epsilon_0} d_1^{-1}(|\gamma|+|\eta|)},
\end{equation*}for all $|z|\leq k^{{\epsilon_0}}$. Here, $\varphi^\Lambda_{j}$ is $\Lambda$-homogeneous polynomial of $(1+d_j)$-degree of the form \begin{equation}\label{eq:homo}
 \varphi^\Lambda_{j}:=\sum_{|(\alpha,\beta)|_\Lambda=1+d_j}c_{\alpha,\beta}z^\alpha\bar z^\beta.   
\end{equation}
An analogous expansion for the Hermitian $(1,1)$-form $\omega$ can be formalized. Recall that $\omega$ has $\sum \omega_{i,j}dz_i\wedge d\bar z_j$ as its local expression in $D$, and $\omega_0:=\omega_{i,\bar j}(0)d z_i\wedge d\bar z_j$. For $N\in\N$, $\gamma,\eta\in\N_0^n$,  
\begin{equation*} 
\Big|\partial^\gamma_z\partial^\eta_{\bar z}\Big(\omega_{i,\bar j}(S^\Lambda_k(z))dz_i\wedge d\bar z_j-\omega_0-\sum_{j=1}^N k^{-d_j}\omega^\Lambda_j\Big)\Big|\leq C_{N,\gamma,\eta}k^{-d_{N+1}-{\epsilon_0} d_1^{-1}(|\gamma|+|\eta|)},
\end{equation*}for all $|z|\leq k^{{\epsilon_0}}$. Here, $\omega^\Lambda_{j}=\sum\omega^\Lambda_{j,i,\bar\ell}\,dz_i\wedge d\bar z_{\ell}$, where $\omega^\Lambda_{j,i,\bar\ell}$ are also $\Lambda$-homogeneous polynomials of degree $(1+d_j)$. 

Let $\chi: \mathbb{C}^n \to [0,1]$ be a smooth cut-off function such that $\operatorname{supp}(\chi) \subset \{z;|z|\leq 1\}$ and $\chi \equiv 1$ on $\{|z|\leq 1/2\}$. Define the extended weight functions and positive Hermitian $(1,1)$-forms on $\Cs^n$ by  
\begin{equation}\label{eq:sca_metric}
\begin{split}
\widetilde{\varphi}^\Lambda_{k}&:=\chi\left(\frac{z}{k^{\epsilon_0}}\right)\cdot\left(k\varphi(S^\Lambda_k(z))\right)+(1-\chi\left(\frac{z}{k^{\epsilon_0}}\right))\cdot\varphi^\Lambda_k(z)\quad;\\
\widetilde{\omega}^\Lambda_{k}&:=\chi\left(\frac{z}{k^{\epsilon_0}}\right)\cdot \left(\sum_{i,j=1}^n \omega_{i,\bar j}(S^\Lambda_k(z))dz_i\wedge d\bar z_j\right)+(1-\chi\left(\frac{z}{k^{\epsilon_0}}\right))\cdot\omega_0(z).
\end{split}
\end{equation}
Define $\dphik:=\tphik-\phimodk$, whose support is in $\{|z|\leq k^{{\epsilon_0}}\}$. The previous Taylor expansions give the following expansions. For any $N,r\in\N$, there exists $C=C(N,r)>0$ such that \begin{equation}\label{eq:conn}
    \begin{split}
        \Big|e^{\dphik(z)}-1-\sum_{j=1}^Nk^{-d_j}\Psi_j(z)\Big|_{\mathscr{C}^r}&\leq C_{N,r}k^{-d_{N+1}};\\
    \Big|\sqrt{\frac{(\tomegak)^n}{(\omega_0)^n}}(z)-1-\sum_{j=1}^Nk^{-d_{j}}\Omega_j(z)
\Big|_{\mathscr{C}^r}&\leq C_{N,r}k^{-d_{N+1}},\quad \text{for all }\,|z|\leq k^{\epsilon_0}.
\end{split}
\end{equation}Observe that $\Psi_j$ and $\Omega_j$ are odd polynomial if $d_j\notin O^{\even}_{\Lambda}$.  Clearly, $e^{\dphik(z)}=\frac{(\tomegak)^n}{(\omega_0)^n}=1$ for $|z|\geq k^{\epsilon_0}$. 

We now consider the asymptotic behavior of the Hilbert spaces $L^2_{\tphik,\tomegak}(\Cs^n)$. For any $k\in\N$, $L^2_{\phimodk,\omega_0}(\Cs^n)=L^2_{\tphik,\tomegak}(\Cs^n)$ as sets of functions since the weights and Hermitian forms are identical outside a compact set. By the uniform convergence of the metrics, the norms $\|\cdot\|_{\phimodk,\omega_0}$ and $\|\cdot\|_{\tphik,\tomegak}$ are uniform compatible in the sense that there exists $C>0$ such that $C^{-1}\|\cdot\|_{\tphik,\tomegak}\leq \|\cdot\|_{\phimodk,\omega_0}\leq C \|\cdot\|_{\tphik,\tomegak}$ for all $k\in\N$.  

In the rest of this section, we consider the Kodiara Laplacian with respect to the weight function $\tphik$ and Hermitian $(1,1)$-form $\tomegak$. Denote \begin{equation}\label{tBoxk}
\tBoxk:=\Box_{\tphik,\tomegak}=\db^*_{\tphik,\tomegak}\db:\Dom\Box_{\tphik,\tomegak}\to L^2_{\tphik,\tomegak}(\Cs^n).  
\end{equation}
For $\zeta\in\Cs\setminus \R$, the resolvent is denoted by \begin{equation}\label{tRk}
    \tRk:=(\zeta-\tBoxk)^{-1}:L^2_{\tphik,\tomegak}(\Cs^n)\to \Dom\tBoxk.
\end{equation}
   
Similar to \eqref{eq:uni_model}, we consider the unitary identification \begin{equation}\label{unitary_t}
   \begin{split}
L^2_{\tphik,\omega_0}(\Cs^n) &\longleftrightarrow L^2_{\omega_0}(\Cs^n) \\
f &\longmapsto e^{-\tphik}f .
\end{split} 
\end{equation} This transforms the differential operator, and the Laplacian as follows:
\begin{align*}
   \db\,&\longleftrightarrow\,\tDk:=e^{-\tphik}\db e^{\tphik} \\
   \tBoxk\,&\longleftrightarrow\, \tHk:=e^{-\tphik}\tBoxk e^{\tphik}
   \end{align*}
Also, for $\zeta\in\Cs\setminus\R$, we denote by \begin{equation}
    \tsRk:=(\zeta-\tHk)^{-1}:L^2_{\tomegak}\to \Dom\tHk.
\end{equation}The Schwartz kernel is $\tsRk(z,w)$, which is equivalent to $e^{-\tphik}\tRk e^{\tphik}$. By repeating the same proof in Theorem \ref{Thm1} and Remark \ref{Rmk1}, we have the off-diagonal estimate for $\tRk(z,w)$. Moreover, if the \emph{quasi-homogeneous model gap} holds, all the coefficients of $\tHk$ converge uniformly in the $\Cinf$-topology. We then have the following conclusion:

\begin{theorem}[Off-diagonal estimate]\label{Thm2}For any $r,N\in\N$ and a bounded domain $U\Subset\Cs$, there exists $\ell(r,U)\in\N$ and $C({r,N},U)>0$ such that the Schwartz kernel $\tRk(x,y)$ of $\tRk$ satisfies the off-diagonal estimate \begin{equation*}
    \Big| \tRk(x,y)\Big|_{\mathscr{C}^r}\leq \dfrac{C(m,N)}{|\operatorname{Im }\zeta|^{N+1}}\cdot \dfrac{(1+|x|+|y|+k)^{\ell(r,U)}}{|x-y|^N},
\end{equation*}for all $|x-y|\geq 1$, $k\in\N$ and $\zeta\in U\setminus\R$. Moreover, if the model polynomial $\phimodk$ is quasi-homogeneous, which means $\phimodk=\phimod$, then  the inequality above reduced to the following:  \begin{equation*}
    \Big| \tRk(x,y)\Big|_{\mathscr{C}^r}\leq \dfrac{C(r,N)}{|\operatorname{Im }\zeta|^{N+1}}\cdot \dfrac{(1+|x|+|y|)^{\ell(r,U)}}{|x-y|^N}.
\end{equation*}    
\end{theorem}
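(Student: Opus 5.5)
The plan is to repeat the proof of Theorem~\ref{Thm1} essentially verbatim, with $\Hk$ replaced by $\tHk$ and the flat form $\omega_0$ replaced by $\tomegak$. The proof of Theorem~\ref{Thm1} used only three inputs:
\begin{enumerate}
\item[(1)] uniform ellipticity of the principal part, together with polynomial growth in $(z,k)$ of the coefficients and all their derivatives;
\item[(2)] the factorization $\Hk=\Dk^*\Dk$ with $\Dk=\db+(\db\phimodk)\wedge\cdot$, and the Leibniz identities \eqref{eq:LB};
\item[(3)] the abstract resolvent bound $\|(\zeta-\Hk)^{-1}\|\le|\operatorname{Im}\zeta|^{-1}$.
\end{enumerate}
The main task is to verify each of these for $\tHk$, uniformly in $k$. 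The rest of the argument is identical.

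For (1) I will use \eqref{eq:sca_metric}. On $\{|z|\le k^{\epsilon_0}\}$, the estimates preceding \eqref{eq:conn} show that $\tphik-\phimodk$ and $\tomegak-\omega_0$ are $O(k^{-c_0})$ in every $\Cr$-norm. The cut-off $\chi(z/k^{\epsilon_0})$ only contributes bounded derivatives. Outside this set the data coincide with $(\phimodk,\omega_0)$. Consequently:
\begin{enumerate}
\item[(a)] $\tomegak$ is uniformly equivalent to $\omega_0$ for $k$ large, so the principal symbol of $\tHk$ is uniformly elliptic;
\item[(b)] the coefficients of $\tHk=\tDk^*\tDk$, which are polynomial expressions in derivatives of $\tphik$, $\tomegak$ and $\tomegak^{-1}$, have $\Cr$-norms on unit balls bounded by $C_r(1+|z|+k)^{M_r}$.
\end{enumerate}
This gives the interior elliptic estimate \eqref{e1} for $(\zeta-\tHk)^\nu$, and with it the reduction \eqref{e2} of the pointwise $\Cr$-bound of $\tsRk(z,w)$ to $\|\widetilde\chi_z\tsRk\widetilde\chi_w\|_{L^2\to L^2}$. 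I will use Sobolev norms taken with respect to $\tomegak$. These are uniformly equivalent to the Euclidean ones, so the constants do not change. For small $k$ there are finitely many cases, which are absorbed into $C$.

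For the $L^2$ claim I will use the unitary picture \eqref{unitary_t}, taking $L^2_{\tomegak}$ as the base space. The commutator identities \eqref{eq:LB} still hold with $\Lambda_{\tomegak}$ in place of $\Lambda_{\omega_0}$. The only quantitative inputs are the pointwise bounds $|V_{\chi^2}\eta|\le C|\nabla\chi|\,|\eta|$ and $|\db\chi^2\wedge\eta|\le C|\nabla\chi|\,|\eta|$. These hold with a $k$-uniform constant because $\tomegak$ is uniformly equivalent to $\omega_0$. The nested cut-off iteration \eqref{eq:0}--\eqref{eq:3} then goes through unchanged and yields $|\operatorname{Im}\zeta|^{-N-1}|z-w|^{-N}$. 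One ingredient needs checking, namely the step $\|\chi^{(j+1)}\tHk\tsRk\chi_w u\|\le C(U)\|\chi^{(j+1)}\tsRk\chi_w u\|$. It follows from $\chi^{(j+1)}(\zeta-\tHk)\tsRk\chi_w=0$, since the supports are disjoint, so the bound holds exactly as before. The kernel $\tRk(x,y)$ differs from $\tsRk(x,y)$ by the factor $e^{\tphik(x)-\tphik(y)}$ up to the density. I expect to handle this factor most cleanly by running the whole argument directly for $\tBoxk$ in its weighted space, where the cutoff identities are the same. The growth of $\tphik$ is polynomial, so it can alternatively be absorbed into $(1+|x|+|y|+k)^\ell$.

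The main obstacle is the quasi-homogeneous refinement, where the factor $k$ must be removed from $(1+|x|+|y|+k)^\ell$. In that case $\phimodk=\phimod$ is independent of $k$, and $\tphik\to\phimod$, $\tomegak\to\omega_0$ in $\Cinf_{\mathrm{loc}}$ with uniform bounds. More precisely, I need bounds uniform in $k$ on each unit ball, of polynomial size in the centre. These follow from the remainder estimates, since on $\{|z|\le k^{\epsilon_0}\}$ the corrections are $O(k^{-c_0})$ and elsewhere they vanish. The coefficient $\Cr$-norms are therefore bounded by $C_r(1+|z|)^{M_r}$ independently of $k$. By Remark~\ref{Rmk1}, the exponent in \eqref{e1} then loses only $(1+|x|+|y|)^\ell$, which gives the second inequality.
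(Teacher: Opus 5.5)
Your core argument is the paper's argument. The paper proves this theorem in one line: it reruns the proof of Theorem~\ref{Thm1} (with Remark~\ref{Rmk1}) for $\tHk$, and notes that in the quasi-homogeneous case the coefficients of $\tHk$ converge in $\Cinf$, so the factor $k$ drops out. Your checks are exactly what that rerun needs: uniform equivalence of $\tomegak$ and $\omega_0$, polynomial coefficient growth, the Leibniz identities with $\Lambda_{\tomegak}$, and the cut-off iteration. They give the bound for the \emph{conjugated} kernel $\tsRk(x,y)$.

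The step that fails is your final transfer to the unconjugated kernel of $(\zeta-\tBoxk)^{-1}$.

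\begin{itemize}
\item The factor you identify is $e^{\tphik(x)-\tphik(y)}$, or $e^{\tphik(x)+\tphik(y)}$ if kernels are taken against the weighted measure. It is the exponential of a polynomially growing function, not a polynomial. Take the germ of $\varphi$ at $x_0$ to be exactly $|z|^2/2$ in dimension one, with $\omega$ flat, so that $\tphik=\phimod=|z|^2/2$ for all $k$. For $y=x+1$ the factor equals $e^{-(\operatorname{Re}x+1/2)}$, which is exponentially large along a real half-line. It therefore cannot be absorbed into $(1+|x|+|y|+k)^{\ell}$.
\item Working directly in the weighted space does not help either. The localized weighted norms $\|\chi_x\tRk\chi_y\|$ coincide with the unweighted norms of $\chi_x\tsRk\chi_y$, and the weights reappear as soon as you pass to pointwise kernel values.
\item In fact the stated polynomial bound is false for the unconjugated kernel. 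In this Fock model the Bergman kernel is $c\,e^{x\bar y}$ for the weight $e^{-|z|^2}$. Its modulus on $\{|x-y|=1\}$ is $c\,e^{(|x|^2+|y|^2-1)/2}$. Since the model has a spectral gap, the Helffer--Sj\"ostrand formula would turn a polynomial off-diagonal bound on the unconjugated resolvent kernel into one on this Bergman kernel, which is impossible.
\end{itemize}

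You should drop that sentence and read the theorem as a statement about $\tsRk(x,y)$. That is how Theorem~\ref{Thm1} is formulated, namely for $\Rk=(\zeta-\Hk)^{-1}$. It is also how the result is used in \eqref{eq:70}, through $\mathcal L_k\tsRk\chi$. With that reading, your proof is complete.
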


We denote the Bergman projection by \begin{equation}\label{tBk}
    \tBk:= B_{\tphik,\tomegak}:L^2_{\tphik,\tomegak}\to \Ker\tBoxk,
\end{equation}whose Schwartz kernel is $\tBk(z,w)$. By unitary relation \eqref{unitary_t}, we have \begin{equation}\label{132}
   \begin{split}
        \tBk  \, &\longleftrightarrow \, \tsBk\,:=\ e^{-\phimodk}\Bmodk e^{\phimodk}; \\
        \tBk(z,w)\, &\longleftrightarrow \, \tsBk(z,w)\,:=\ e^{-\tphik(z)}\Bmodk(z,w) e^{\tphik(w)}.
    \end{split}  
\end{equation} Note that $\tsBk:L^2_{\tomegak}\to \Ker\tHk$ is the orthogonal projection. The following proposition shows that the spectral gap of $\Boxmodk$ leads to the spectral gap of $\tBoxk$ by some elementary perturbation arguments. 
 
\begin{proposition}\label{prop:stable_gap}
    Assume  $\operatorname{Spec}{\Boxmodk}\cap (0,c)=\varnothing$ for some $c>0$, and sufficiently large $k$.
    Then, there exists $c'>0$ and $k_0\in\N$ such that for all $k\geq k_0$, \[
\operatorname{Spec}\tBoxk\cap (0,c')=\varnothing.
    \] 
\end{proposition}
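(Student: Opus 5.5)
Throughout, $\Ker\tBoxk=\Ker\db$ in $L^2$, and the spectral gap $\operatorname{Spec}\tBoxk\cap(0,c')=\varnothing$ is equivalent to the inequality
\[
\|\db u\|^2_{\tphik,\tomegak}\geq c'\,\|u\|^2_{\tphik,\tomegak}\qquad\text{for all } u\in\Dom\db,\ u\perp_{\tphik,\tomegak}\Ker\db .
\]
The same characterization holds for $\Boxmodk$ with constant $c$, the norms $\|\cdot\|_{\phimodk,\omega_0}$, and the orthogonal complement taken in $L^2_{\phimodk,\omega_0}$. So I will work with quadratic forms rather than resolvents. The proof is a comparison of two Hilbert structures on the same underlying space.

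\textbf{Step 1: the two settings share everything except the inner product.} As noted before the proposition, $L^2_{\phimodk,\omega_0}=L^2_{\tphik,\tomegak}$ as sets, with uniformly equivalent norms:
\[
C^{-1}\|\cdot\|_{\tphik,\tomegak}\leq\|\cdot\|_{\phimodk,\omega_0}\leq C\|\cdot\|_{\tphik,\tomegak},
\]
with $C$ independent of $k$. This holds on both $(0,0)$-forms and $(0,1)$-forms, because $\tomegak\to\omega_0$ uniformly in $\Cinf$ on $\{|z|\le k^{\epsilon_0}\}$ and coincides with it outside. The kernels $\Ker\db$ in the two spaces are the same closed subspace $H$ of holomorphic $L^2$ functions. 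Likewise, $\Dom\db$, taken as the maximal extension of $\db$ in the weak sense, is the same set in both spaces, since membership only requires $u,\db u\in L^2$.

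\textbf{Step 2: transfer the gap.} Take $u\in\Dom\db$ orthogonal to $H$ for the $(\tphik,\tomegak)$ inner product. Let $Pu$ be its orthogonal projection onto $H$ in $L^2_{\phimodk,\omega_0}$, and set $v:=u-Pu$. Then $\db v=\db u$, and the model gap gives
\[
\|\db u\|^2_{\phimodk,\omega_0}\geq c\,\|v\|^2_{\phimodk,\omega_0}.
\]
The key observation is that $\|v\|$ is comparable to $\|u\|$. Indeed, $u$ is the $\tphik,\tomegak$-orthogonal projection of $v$ onto $H^{\perp}$, because $v-u=-Pu\in H$. Hence
\[
\|u\|_{\tphik,\tomegak}\leq\|v\|_{\tphik,\tomegak}\leq C\,\|v\|_{\phimodk,\omega_0}.
\]
Combining this with the norm comparison for $\db u$ gives
\[
\|\db u\|^2_{\tphik,\tomegak}\geq C^{-2}\,c\,C^{-2}\,\|u\|^2_{\tphik,\tomegak},
\]
that is, $c'=c\,C^{-4}$ for $k\geq k_0$.

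\textbf{Step 3: check that the Gaffney extension has the expected form domain.} For the Gaffney extension $\db^*\db$, the form domain is exactly $\Dom\db$ with the maximal extension on $\Cs^n$. Since both metrics are complete and flat at infinity, this is consistent with the essential self-adjointness remarks earlier in the paper. I also need to justify that spectrum in $(0,c')$ would produce a vector $u\perp\Ker$ with $\|\db u\|^2<c'\|u\|^2$, which follows from the spectral theorem.

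I expect the main (mild) obstacle to be precisely this care with domains, together with the uniformity of the constant $C$ in $k$. The latter requires $\tomegak$ to stay uniformly comparable to $\omega_0$ and $|\dphik|$ to stay uniformly bounded, both of which follow from \eqref{eq:conn} for $k\geq k_0$. No resolvent perturbation is needed.
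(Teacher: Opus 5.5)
Your proof is correct, but its key step differs from the paper's.

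\textbf{The paper's route.} The paper uses the \emph{smallness} of the perturbation. Since the density ratio $e^{-2\dphik}(\tomegak)^n/(\omega_0)^n$ equals $1+O(k^{-d_1})$ uniformly, a function $u$ orthogonal to $\Ker\db$ in $L^2_{\tphik,\tomegak}$ is almost orthogonal to $\Ker\db$ in $L^2_{\phimodk,\omega_0}$. Hence $\|\Bmodk u\|_{\phimodk,\omega_0}=O(k^{-d_1})\|u\|$, so $\|u-\Bmodk u\|_{\phimodk,\omega_0}\gtrsim\|u\|_{\tphik,\tomegak}$ once $k$ is large. The model gap is then applied to $u-\Bmodk u$.

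\textbf{Your route.} You replace that computation with the minimizing property of orthogonal projection:
$\|u\|_{\tphik,\tomegak}=\operatorname{dist}_{\tphik,\tomegak}(u,\Ker\db)\leq\|u-\Bmodk u\|_{\tphik,\tomegak}$.
This requires only that the two norms be uniformly equivalent on $(0,0)$- and $(0,1)$-forms, with the same kernel and the same maximal domain of $\db$. It does not need $\dphik$ or $\tomegak-\omega_0$ to decay in $k$.

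\textbf{What each buys.} Your argument proves a stronger stability statement. The gap survives any uniformly bounded change of weight combined with a uniformly comparable change of Hermitian form, with explicit constant $c'=cC^{-4}$. In your version, large $k$ enters only through the hypothesis and the norm comparison. The paper's version additionally needs $k$ large to absorb the $O(k^{-d_1})$ cross term. Your treatment of domains is also sound and is something the paper leaves implicit. The form domain of the Gaffney extension is $\Dom\db_{\max}$, this is the same set for both metrics, and the gap is equivalent to the quadratic-form inequality on $(\Ker\db)^\perp$. Finally, your mechanism (a bounded change of weight plus the minimal-norm property) is the same one the paper itself relies on in the proofs of Theorem~\ref{prop_decouple} and Proposition~\ref{prop_goodmodel}, so it fits naturally with the rest of the paper.
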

\begin{proof}
 Let $u_k\perp \operatorname{Ker }\db$ with respect to the metric $(\cdot|\cdot)_{\tphik,\tomegak}$. For every  \mbox{$v\in\ker\db\cap L^2_{\tphik,\tomegak}$}, a direct computation yields \[
 (u_k|v)_{\phimodk,\omega_0}=(u_k|v)_{\phimodk,\omega_0}-\underbrace{(u_k|v)_{\tphik,\tomegak}}_{=0}=\int_{\Cs^n}u_k\Bar{v}(\underbrace{1-e^{-2(\tphik-\phimodk)}\dfrac{(\tomegak)^n}{(\omega_0)^n}}_{:=V_k(z)})e^{-2\phimodk} dV_{\omega_0}
 \]By \eqref{eq:conn},  $\sup |(1-V_k)|=O(k^{-d_1})$, and hence $(u_k|v)_{\phimodk,\omega_0}=O(k^{-d_1})\|u_k\|_{\phimodk,\omega_0}\|v\|_{\phimodk,\omega_0}$. Then, \[
 \|\Bmodk u_k\|^2_{\phimodk,\omega_0}=(u_k|\Bmodk u_k)_{\phimodk,\omega_0}=O(k^{-d_1}) \|u_k\|^2_{\phimodk,\omega_0}.
 \]Applying the assumption and the uniform compatibility of $\|\cdot\|_{\phimodk,\omega_0}$ and $\|\cdot\|_{\tphik,\tomegak}$, we derive that for large enough $k\in\N$, 
 \begin{equation*}
\|u_k\|^2_{\tphik,\tomegak}\lesssim \|u_k-\Bmodk u_k\|^2_{\phimodk,\omega_0}\leq c^{-1} \|\db u_k\|_{\phimodk,\omega_0}^2\lesssim c^{-1}{\|\db u_k\|^2_{\tphik,\tomegak}}, 
\end{equation*}which completes the proof by the fact that $\|\db u_k\|^2_{\tphik,\tomegak}=(\tBoxk u_k|u_k)_{\tphik,\tomegak}$. 
 \end{proof}
By Proposition \ref{prop:stable_gap} above, if the model gap assumption holds,   we may choose a suitable $f\in\Cinf_c(\R)$ with $f(0)=1$ and $\supp f\subset [-\frac{c'}{2},\frac{c}{2}]$ such that $f(\Hk)=\mathds{1}_{\{0\}}(\Hk)=\tsBk$. The Helffer--Sj\"ostrand formula gives  \[
\tsBk
    = \frac{1}{2\pi \ii}\int_{\Cs}
    (\zeta-\Hk)^{-1}\,
    d\zeta \wedge \db \Tilde{f},
\]for all large enough $k$. Hence, we have the  following analogous result to Theorem \ref{Thm_off_B_mod}:
\begin{theorem}[Off-diagonal estimate for scaled Bergman kernel]\label{Thm_off_B}Assume there exist $c>0$ such that $\Spec\Boxmodk\cap(0,c)=\varnothing$ for large enough $k\in\N$. For any $r,N\in\N$, there exists $\ell(r)>0$ and $C=C(r,N)>0$ such that 
\begin{equation*}\Big| \tsBk(z,w)\Big|_{\mathscr{C}^r}\leq C(r,N)\dfrac{(1+|z|+|w|+k)^{\ell(r)}}{1+|z-w|^N},
\end{equation*}for$k\in\N$. 

Moreover, if the quasi-homogeneous model holds, the inequality above reduces to the following: \begin{equation*}\Big| \tsBk(z,w)\Big|_{\mathscr{C}^r}\leq C(r,N)\dfrac{(1+|z|+|w|)^{\ell(r)}}{1+|z-w|^N}.
\end{equation*} 
\end{theorem}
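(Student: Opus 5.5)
The plan is to follow the route that gave Theorem~\ref{Thm_off_B_mod}, now for the extended operator $\tHk$ in place of $\Hk$. There are three steps: identify $\tsBk$ with a spectral cut-off of $\tHk$, write that cut-off through the Helffer--Sj\"ostrand formula, and integrate the resolvent bounds of Theorem~\ref{Thm2}. The near-diagonal region $|z-w|\leq 1$ is handled separately by an elliptic estimate.

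\textbf{Step 1 (spectral identification).} By Proposition~\ref{prop:stable_gap}, there are $c'>0$ and $k_0$ such that $\Spec\tBoxk\cap(0,c')=\varnothing$ for $k\geq k_0$. Since $\tHk$ is unitarily equivalent to $\tBoxk$, the same holds for $\tHk$. Fix $f\in\Cinf_c(\R)$ with $f=1$ near $0$ and $\supp f\subset(-c'/2,c'/2)$; then $f(\tHk)=\tsBk$. For the finitely many $k<k_0$ the estimate is trivial after enlarging $C$: each such $\tsBk$ satisfies a bound of the required type, by the argument below with $f$ replaced by the characteristic function and without the gap. It is simplest to observe that the proof of \eqref{BB} together with a direct off-diagonal argument works for fixed $k$. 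Alternatively, one may just state the result for $k\geq k_0$.

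\textbf{Step 2 (off-diagonal region $|z-w|\geq1$).} Choose an almost analytic extension $\widetilde f$ supported in a bounded set $U$ and satisfying \eqref{eq:HS_aa} with exponent $N+1$. The Helffer--Sj\"ostrand formula gives
\[
\tsBk(z,w)=\frac{1}{2\pi\ii}\int_{\Cs}\tsRk(z,w)\,d\zeta\wedge\db\widetilde f .
\]
Differentiating under the integral and inserting the bound of Theorem~\ref{Thm2}, the factor $|\operatorname{Im}\zeta|^{-N-1}$ is absorbed by $|\partial_{\bar\zeta}\widetilde f|\leq C_N|\operatorname{Im}\zeta|^{N+1}$. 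This yields $|\tsBk(z,w)|_{\Cr}\leq C(1+|z|+|w|+k)^{\ell}|z-w|^{-N}$. To justify differentiating the distributional identity, pair against test functions supported near $z$ and near $w$ and use the $H^{-\mu}\to H^{\mu}$ bound already established in the proof of Theorem~\ref{Thm1}.

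\textbf{Step 3 (near-diagonal region $|z-w|\leq 1$).} Repeat the argument leading to \eqref{BB} with $\tHk$ in place of $\Hk$. Interior elliptic estimates for $\tHk$ hold with polynomial loss in $1+|z|+k$, because the coefficients of $\tHk$ have polynomially bounded $\Cr$ norms in $z$ and $k$: on $|z|\leq k^{\epsilon_0}$ they come from the rescaled $\varphi,\omega$, and elsewhere from $\phimodk,\omega_0$. Combined with $\|\tsBk\|\leq1$, this gives a polynomial bound, which is equivalent to the claim when $|z-w|\leq1$.

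\textbf{Quasi-homogeneous case.} Here the coefficients of $\tHk$ converge in $\Cinf$ and are bounded uniformly in $k$, so the factor $k$ drops out of both the elliptic constants and Theorem~\ref{Thm2}. This gives the second inequality.

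The main obstacle is the uniformity of the elliptic estimates in $k$ across the transition zone $|z|\sim k^{\epsilon_0}$ of the cut-off $\chi(z/k^{\epsilon_0})$. I would handle it using the derivative bounds on $k\varphi\circ\Sk-\phimodk$ from Section~\ref{sec_4}, which are $O(1)$ there.
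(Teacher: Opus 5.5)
Your proposal is correct and follows the paper's route. Proposition~\ref{prop:stable_gap} gives $f(\tHk)=\tsBk$ for large $k$; the Helffer--Sj\"ostrand formula together with Theorem~\ref{Thm2} covers $|z-w|\geq1$; and the elliptic argument behind \eqref{BB}, with $\|\tsBk\|\leq1$, gives the near-diagonal polynomial bound, with the $k$-dependence dropping out in the quasi-homogeneous case.

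Discard your suggestion of treating $k<k_0$ by replacing $f$ with an indicator function and dropping the gap. An indicator admits no almost analytic extension, and the decay argument genuinely relies on the gap. The paper likewise only proves the bound for all sufficiently large $k$, so your fallback of stating it for $k\geq k_0$ is exactly what is established.
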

\section{Localization: Proof of Theorem~\ref{Thm_Loc_Off}}\label{sec_5}

In this section, we introduce a novel approach to localize the Bergman kernel via the scaling method and the Helffer-Sjöstrand formula.

Throughout this section, we impose the assumptions in Theorem \ref{Thm_Loc_Off}. Recall that $S^\Lambda_k(z_1,\cdots,z_n)$ is the scaling map considered in the previous sections. We can consider the pull-back of the Hermitian metric $h$ on $L$ and the Hermitian $(1,1)$-form $\omega$ near the given point $x_0\in X$. For any section $u\in\Cinf(X,L^k)$, we can locally pull-back the section near $p$ and denote 
\[
S^\Lambda_k\,u:=(S^\Lambda_k)^*u=u(S^\Lambda_k(z)),
\]for all $x\in D$.
The corresponding weight function of the pull-back metric $(S^{\Lambda_k})^*h^k$ in $D$ is given by $k\varphi(S^\Lambda_k(z))$. By the construction in the previous section (see \eqref{eq:sca_metric}),\[k\varphi(S^\Lambda_k(z))=\tphik(z),\quad\text{for }|z|<\frac{1}{2}k^{{\epsilon_0}}.\] Moreover, the pull-back of $\omega$ is given by \[(S^\Lambda_k)^*\omega=\sum_{i,j=1}^n\omega_{i,j}(S^\Lambda_k(z))k^{-\lambda_i-\lambda_j}dz_i\wedge d\bar z_j,\quad \text{in}\, D.\]
If $\Lambda=(\lambda,\cdots,\lambda)$ with $\lambda^{-1}\in2\N$, one has $(S^\Lambda_k)^*\omega(z)=k^{-2\lambda}\tomegak(z)$ for all $|z|<\frac{1}{2}k^{{\epsilon_0}}$. Hence, by locally identifying the pull-back section $S^\Lambda_ks^k:S^{\Lambda}_k(D)\To (S^\Lambda_k)^*L^k$ to the unit section $\mathbf{1}{:\Cs^n}\to\{1\}$, one has the scaling identity:
 \[\Box_{(S^\Lambda_k)^*h^k,(S^\Lambda_k)^*\omega}=k^{2\lambda}\cdot \tBoxk,\quad \text{for }\,|z|\leq \frac{k^{{\epsilon_0}}}{2}.\]
However, in our general case, we do not have such an identity if $\lambda_i\neq \lambda_j$ for some $i\neq j$. To overcome this difficulty, we slightly modify the Hermitian form. If we change the Hermitian form $\omega$ to $\omega_k$, which preserves the volume in the sense that $(\omega_k)^n=\omega^n$, the Hilbert space $L^2_{h,\omega_k}(X,L^k)$ and the Bergman projection remain the same.

 Recall that $\chi:\Cs^n\To[0,1]$ is the cut-off function given in the previous section (see \eqref{eq:sca_metric}). We define the modification $\omega_k$ of the Hermitian $(1,1)$-form $\omega$ by \begin{equation*}
    \omega_k(z):=k^{-\frac{2}{n}\chi(\frac{|z|}{2})|\Lambda|}\sum_{i,j=1}^n\omega_{i,j}(z)\cdot k^{\chi(\frac{|z|}{2})[\lambda_i+\lambda_j]}dz_i\wedge d\bar z_j, \quad \text{for }z\in D,
\end{equation*}and $\omega_k\equiv\omega$ in $X\setminus D$. By construction, for every $k\in\N$, one has 
$dV_{\omega_k}
=\frac{\omega_k^n}{n!}
=\frac{\omega^n}{n!}
=dV_\omega$. Consequently, the two $L^2$-inner products on sections of $L^k$
coincide:
\[(\,\cdot\,|\,\cdot\,)_{h^k,\omega_k}=(\,\cdot\,|\,\cdot\,)_{h^k,\omega}.\]
Thus, we identify isometrically
$L^2_{h^k,\omega_k}(X,L^k)=L^2_{h^k,\omega}(X,L^k)$. Although the corresponding Kodaira Laplacians need not coincide, their
kernels on $L^k$-valued functions are the same:
$\ker \Box_{h^k,\omega}=\ker \Box_{h^k,\omega_k}=\ker \bar\partial\cap L^2_{h^k,\omega}(X,L^k)$. Next, we have the following local scaling identity in $D$: \begin{equation}\label{eq:sca_kod}
\Box_{(S_k^\Lambda)^*h^k,(S_k^\Lambda)^*\omega_k}=k^{\frac{2}{n}|\Lambda|}\cdot (\Box_{\tphik,\tomegak}),\quad \text{for }\,|z|\leq \frac{k^{{\epsilon_0}}}{2}.
\end{equation}
Recall that we can choose $f\in\Cinf_c(\R,[0,1])$ with $f(0)=1$ such that $f(\tBoxk)=\tBk$ for all large enough $k$ by the \emph{model gap assumption} (see Definition \ref{def:model-gap}) and Proposition \ref{prop:stable_gap}. Here, $\tBk$ is the weighted Bergman kernel with respect to the measure $e^{-2\tphik}\frac{(\tomegak)^n}{n!}$. (see  \eqref{tBk}). Now, we aim to localize the functional calculus $f(k^{\frac{2}{n}|\Lambda|}\Box_{h^k,\omega_k})$ near the point $x_0$. To this end, we begin by comparing  $(\zeta-k^{-\frac{2}{n}|\Lambda|}\Box_{h^k,\omega_k})^{-1}$ with  $\tRk=(\zeta-\Box_{\tphik,\tomegak})^{-1}$ near the shrinking domain $D_k$.  Define a set of cut-off functions:  \begin{equation}\label{cut_off}
\begin{split}
     \chi_k&:= \chi\circ S^\Lambda_k;\\
     \hchik&:=\chi(\frac{2z}{k^{\epsilon_0}});\\
     \tchik&:=\chi(\frac{2z}{k^{\epsilon_0}})\circ\Sk.
\end{split}
\end{equation}  Note that $\chi_k,\tchik\in\Cinf_c(X;[0,1])$,  $\supp \chi_k\subset D_k$. Moreover, for any $\delta>\epsilon_0$, \begin{equation}\label{tDk}
    \supp \tchik \subset \{z\in D;|(\Sk)^{-1}(z)|\leq k^{\delta}\},
\end{equation} for sufficiently large $k$. Define 
\begin{equation}\label{eq:Rlock}
       \Rlock:=  \underbrace{\widetilde{\chi}_k\,(S^\Lambda_k)^{-1}}_{=(S^\Lambda_k)^{-1}\hchik}\cdot \tRk \,S^\Lambda_k\cdot \chi_k:L^2_{h^k,\omega_k}(X,L^k)\To L^2_{h^k,\omega_k}(X,L^k), 
    \end{equation}where $\tRk:=(\zeta-\Box_{\tphik,\tomegak})^{-1}$ is defined in \eqref{tRk}. Note that the operators $\Rlock$ are well-defined and bounded. By elliptic regularity, we have 
    \[
    \Rlock \, u\in \Dom \Box_{h^k,\omega_k}\quad\text{for }\, u\in L^2_{h^k,\omega_k}(M,L^k). 
    \]
    For simplicity of notation, we denote \[
    \Rglobk:=(\zeta-k^{-\frac{2}{n}|\Lambda|}\Box_{h^k,\omega_k})^{-1}.
    \]Since the operator $\Box_{h^k,\omega_k}:\Dom \Box_{h^k,\omega_k}\To L^2_{h^k,\omega_k}(X,L^k)$ is self-adjoint, we have \[|\operatorname{Im}\zeta|\,\|u\|_{h^k,\omega_k}^2=\Big|
\operatorname{Im}
\left(
(\zeta-k^{-\frac{2}{n}|\Lambda|}\Box_{h^k,\omega_k})u,u
\right)_{h^k,\omega_k} \Big|,\]for $u\in\Dom\Box_{h^k,\omega_k}$ and $\zeta\in\Cs\setminus\R$. By the Cauchy--Schwarz inequality,
\begin{equation}\label{eq:gap_res}
\|u\|_{h^k,\omega_k}\leq \dfrac{1}{|\operatorname{Im }\zeta|}\|(\zeta-k^{-\frac{2}{n}|\Lambda|}\Box_{h^k,\omega_k})u\|_{h^k,\omega_k},
\end{equation} for all $\zeta\in\Cs\setminus\R$ and $u\in \Dom \Box_{h^k,\omega_k}$. Hence, for $u\in L^2_{h^k,\omega_k}(M,L^k)$, we have \begin{equation}\label{eq:67}
\|\Rglobk\chi_k u-\Rlock u\|_{h^k,\omega_k} \leq \dfrac{1}{|\operatorname{Im}\zeta|}\|\chi_k u-(\zeta-k^{-\frac{2}{n}|\Lambda|}\Box_{h^k,\omega_k})\Rlock u\|_{h^k,\omega_k}
\end{equation}
By the standard identity of the pull-back operator, we locally have  \begin{equation}\label{pull-back}
   \Box_{h^k,\omega_k}=(S^\Lambda_k)^{-1}\circ\Box_{(S^\Lambda_k)^*h^k,(S^\Lambda_k)^*\omega_k}\circ(S^\Lambda_k). 
\end{equation} Combining this with \eqref{eq:sca_kod}, we can compute the last term on the right-hand side of \eqref{eq:67}. We get
\begin{equation}\label{eq:68}
    \begin{split}
     (\zeta-k^{-\frac{2}{n}|\Lambda|}&\Box_{h^k,\omega_k})\Rlock = (S_k^\Lambda)^{-1}(\zeta-k^{-\frac{2}{n}|\Lambda|}\Box_{(S_k^\Lambda)^*h^k,(S_k^\Lambda)^*\omega_k})\widehat{\chi}_k\tRk\chi S^\Lambda_k u \\
     &=(S_k^\Lambda)^{-1}\widehat{\chi}_k\underbrace{(\zeta-\tBoxk)\tRk}_{=\Id}\chi S^\Lambda_k u+(S^\Lambda_k)^{-1}{L}_k\tRk\chi u(S^\Lambda_k(z))\\
     &= \chi_ku+(S^\Lambda_k)^{-1}{L}_k\tRk\chi u(S^\Lambda_k(z)),
    \end{split}
\end{equation}
 where ${L}_k$ is a first-order differential operator supported in $\{\frac{k^{{\epsilon_0}}}{4} < |z| < \frac{k^{{\epsilon_0}}}{2}\}$. The operator arises from applying the Leibniz rule in the region where $\nabla\widehat{\chi}_k \neq 0$, and its coefficients grow at most polynomially as $k \to \infty$. By the unitary identification \eqref{unitary_t}, we get

\begin{equation}\label{eq:69}
    \|(S^\Lambda_k)^{-1}{L}_k\tRk\chi u(S^\Lambda_k(z))\|_{h^k,\omega_k}\leq \|\mathcal{L}_k\tsRk\chi \|_{L_{\tomegak}^2\to  L^2_{\tomegak}}\cdot \|u\|_{h^k,\omega_k}, 
\end{equation}where $\mathcal{L}_k:=e^{\tphik}L_k e^{-\tphik}$ and  $\|\cdot\|_{L_{\tomegak}^2\to  L^2_{\tomegak}}$ is the operator norm for the space of bounded operators from $L_{\tomegak}^2(\Cs^n)$ to itself. Since the coefficients of $\mathcal{L}_k$ grow at most polynomially and $d(\supp\mathcal{L}_k,\supp\chi)\geq ck^{{\epsilon_0}}$ for some $c>0$, we can apply the off-diagonal estimate for $\tRk(z,w)$ (see Theorem.\ref{Thm2}) to see that for any $N\in\N$, there exist $\ell(N)>0$ and $C_N>0$ such that  \begin{equation}\label{eq:70}
   \|\mathcal{L}_k\tsRk\chi \|_{L_{\tomegak}^2\to  L^2_{\tomegak}}\leq C_N\dfrac{k^{-N}}{|\operatorname{Im}\zeta|^{\ell(N)}}. 
\end{equation}Combining \eqref{eq:67}-\eqref{eq:70}, we can estimate the terms in \eqref{eq:67} as follows. For any $N\in\N$, there exist $\ell(N)>0$ and $C_N>0$ such that \begin{equation}
   \|\Rglobk\chi_k u-\Rlock u\|_{h^k,\omega_k} \leq C_N\dfrac{k^{-N}}{|\operatorname{Im}\zeta|^{\ell(N)}}\|u\|_{h^k,\omega_k}, 
\end{equation}which is the $L^2$-localization property of the resolvents $\Rglobk$ in $D_k$. Next, we apply the Helffer--Sj\"ostrand formula to get the $L^2$-localization of the functional calculus in $D_k$ as the following:\begin{equation}\label{eq:localization_f}
    \begin{split}
        \Big\|f(k^{-\frac{2}{n}|\Lambda|}{\Box}_{h^k,\omega_k})\chi_k&-(S^\Lambda_k)^{-1}\widehat\chi_kf(\tBoxk) S_k^{\Lambda}\chi_k\Big\|_{L^2\to L^2}\\& =\Big\|\frac{1}{2\pi \ii}\int_{\Cs}\left(\Rglobk\chi_k-\Rlock\right)d\zeta \wedge \db \Tilde{f}\Big\|_{L^2 \to L^2}\\&\leq \frac{1}{2\pi }\int_{\Cs}\Big\|\left(\Rglobk\chi_k-\Rlock\right)\Big\|_{L^2 \to L^2 }d\zeta \wedge \db \Tilde{f}\\&\leq C_N k^{-N},
    \end{split}
\end{equation} for any $N\in\N$ and some $C_N>0$. Here, we use the property of almost analytic extension \eqref{eq:HS_aa} to dominate the term $|\operatorname{Im}\zeta|^{-\ell(N)}$ in the integral.

Throughout this section, for two families of bounded operators
$A_k$, $B_k$ on $L^2_{h^k,\omega}(X,L^k)$, we write
\[
A_k\equiv B_k
\]
if, for every $N\in\N$, there exist constants $C_N>0$ and
$k_N\in\N$ such that
\[
\|A_k-B_k\|_{L^2\to L^2}
\leq C_Nk^{-N},
\qquad \text{for }\, k\geq k_N.
\]
By \eqref{eq:localization_f} and the fact that $f(\tBoxk)=\tBk$ for all large enough $k\in\N$, we  observe the following :
\begin{equation}\label{eq:133}
 \widetilde\chi_k\,(S^\Lambda_k)^{-1}\,\tBk \,S_k^{\Lambda}\chi_k \,\equiv \,f(k^{-\frac{2}{n}|\Lambda|}{\Box}_{h^k,\omega_k})\chi_k,  
\end{equation}where the cut-off function $\widetilde{\chi}_k$ is defined in \eqref{cut_off}.
Before proving Theorem \ref{Thm_Loc_Off}, we need the following Lemma.
\begin{lemma}\label{lem:kL}
    For any $m\in\N$, we have \begin{equation*}
\left(\Box_k\right)^m\tchik\left(\Sk\right)^{-1} \tBk\Sk\chik\equiv 0.
\end{equation*}
\end{lemma}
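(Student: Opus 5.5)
The approach is to pull the operator back to $\Cs^n$ by the scaling identity and to use two facts: $\tBoxk$ annihilates the range of $\tBk$, and the only surviving contribution comes from the commutator with the cut-off $\widehat\chi_k$, whose support is far from $\supp\chi$. Off-diagonal decay of $\tsBk$ then makes that contribution $O(k^{-\infty})$.

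\emph{Step 1: reduce to the Laplacian $\Box_{h^k,\omega_k}$.} By \eqref{pull-back} and \eqref{eq:sca_kod}, on $\supp\tchik$ we have
\[
\Box_{h^k,\omega_k}\,(\Sk)^{-1}=k^{\frac{2}{n}|\Lambda|}(\Sk)^{-1}\tBoxk .
\]
The modified form $\omega_k$ differs from $\omega$ on $D$, so $\Box_k$ and $\Box_{h^k,\omega_k}$ are different operators. Both are second-order operators with smooth coefficients on $X$, however, and in the pulled-back coordinates $z$ their coefficients grow at most polynomially in $k$ on $\{|z|\le k^{\epsilon_0}\}$. I would therefore not compare the two operators directly. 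Instead I would show that for every differential operator $P_k$ of order $\le 2m$ whose coefficients, after pulling back by $\Sk$, have $\Cr$-norms $O(k^{M})$ on $|z|\le k^{\epsilon_0}$, one has $P_k\,\tchik(\Sk)^{-1}\tBk\Sk\chik\equiv0$ \emph{modulo a term} of the form $(\Sk)^{-1}Q_k\tBk\chi\Sk$.

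\emph{Step 2: write $P_k$ as a local operator in $z$.} Set $Q_k:=\Sk P_k(\Sk)^{-1}$ and expand $Q_k\widehat\chi_k\tBk\chi$ with the Leibniz rule. Because $\tBoxk\tBk=0$, I can move all derivatives onto the Bergman kernel. In conjugated form the kernel satisfies $\widetilde{\mathcal H}_k\tsBk=0$, and the relevant expression is
\[
\bigl(\Sk)^{-1}\widehat\chi_k\,\Sk\,P_k\,\tBk\chi .
\]
This is bounded by $\|\widehat\chi_k Q_k\tBk\chi\|$ in the $L^2$ operator norm, after accounting for the Jacobian and the norm equivalences between $L^2_{h^k,\omega_k}$ and $L^2_{\tphik,\tomegak}$ established in Section 4. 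The Jacobian factors are powers of $k$, which is harmless.

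\emph{Step 3: cancel the interior via the holomorphy of $\tBk$.} The main point is that $\Box_k=\db^*_{h^k,\omega}\db$ and $\db\tBk=0$ on the range. So $\Box_k$ applied to $\widehat\chi_k\tBk u$ equals $\db^*\bigl((\db\widehat\chi_k)\tBk u\bigr)$, which is supported only where $\nabla\widehat\chi_k\neq0$, that is, on $\{k^{\epsilon_0}/4\le|z|\le k^{\epsilon_0}/2\}$. Iterating, $(\Box_k)^m\widehat\chi_k\tBk\chi$ is a sum of differential operators of order $\le 2m$ with polynomially bounded coefficients, supported in that annulus, composed with $\tBk\chi$. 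This step is the same regardless of whether one uses $\omega$ or $\omega_k$.

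\emph{Step 4: apply off-diagonal decay.} On the annulus, the distance to $\supp\chi\subset\{|w|\le1\}$ is $\ge c\,k^{\epsilon_0}$. Theorem \ref{Thm_off_B} bounds the $\Cr$-norms of $\tsBk(z,w)$ by $C(1+|z|+|w|+k)^{\ell}k^{-\epsilon_0N}$ for arbitrary $N$, and the conjugating weights $e^{\pm\tphik}$ are harmless in the conjugated picture. A Schur test over the annulus, whose volume is $O(k^{2n\epsilon_0})$, then gives an $L^2$ operator norm of $O(k^{-N'})$ for every $N'$. I expect this step to be the main obstacle, for two reasons. First, one must check that the coefficients of $\Box_k$ in rescaled coordinates, including the factors $k^{\lambda_i+\lambda_j}$ and the conjugation by $e^{\tphik}$, grow only polynomially, so that the fixed loss $k^{\ell}$ is absorbed by choosing $N$ large. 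Second, near the outer edge of $\supp\widehat\chi_k$ one needs $\tphik=k\varphi\circ\Sk$, which holds because $\supp\widehat\chi_k\subset\{|z|<k^{\epsilon_0}/2\}$.
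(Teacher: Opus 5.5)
Your proposal is correct and follows the paper's proof. The key step is your Step~3: since $\tBk u$ is holomorphic, $(\Box_k)^m\tchik(\Sk)^{-1}\tBk\Sk\chik$ reduces to a differential operator of order $2m-1$ with polynomially growing coefficients, supported on the annulus $\{k^{\epsilon_0}/4\le|z|\le k^{\epsilon_0}/2\}$. The off-diagonal decay of $\tsBk$ from Theorem~\ref{Thm_off_B}, at distance $\gtrsim k^{\epsilon_0}$ from $\supp\chi$, then gives $O(k^{-\infty})$; the paper likewise works directly with the pulled-back $(\Sk)^*\omega$ rather than $\omega_k$, so your Steps~1--2 (whose ``modulo a term'' reformulation is essentially tautological) can simply be dropped.
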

\begin{proof}By the pull-back identity \eqref{pull-back}, we get
\begin{equation*}
\Big[\left(\Box_k\right)^m\tchik\left(\Sk\right)^{-1} f(\tBoxk)\Sk\chik u\Big](z)=(\Sk)^{-1}(\Box_{k (\Sk)^*\varphi,(\Sk)^*\omega})^m\hchik f(\tBoxk)\chi u(\Sk(z)). 
\end{equation*}Since $\db\tBk=0$ and $\hchik(z)=1$ for $|z|\leq\frac{k^{\epsilon_0}}{4}$, we have
\begin{equation*}
    \Box_{k (\Sk)^*\varphi,(\Sk)^*\omega}\hchik \tBk(z)=0,\quad\textit{for }\,|z|\leq \frac{k^{\epsilon_0}}{4}.
\end{equation*}By the  Leibniz rule, we get: 
\begin{equation*}
  (\Box_{k (\Sk)^*\varphi,(\Sk)^*\omega})^m\hchik f(\tBoxk)=L_{m,k}f(\tBoxk),  
\end{equation*}
where $L_{m,k}$ is a $(2m-1)$-order differential operator supported in $\{\frac{k^{{\epsilon_0}}}{4}<|z|<\frac{k^{\epsilon_0}}{2}\}$. The coefficients
grow at most polynomially as $k\to\infty$. Combining the above arguments, we get:
\begin{equation*}
    \|(\Sk)^{-1}L_{m,k}\tBk\chi u(S^\Lambda_k(z))\|_{h^k,\omega}\leq \|\mathcal{L}_{m,k}\tsBk\chi\|_{L^2_{\tomegak}\to L^2_{\tomegak}}\cdot \|u\|_{h^k,\omega}.
\end{equation*} Here, $\mathcal{L}_{m,k}:= e^{-\tphik}L_{m,k}e^{\tphik}$. We complete the proof by the off-diagonal decay  estimate of $\tsBk(z,w)$ when $|z-w|\geq k^{{\epsilon_0}}/4$ (see Theorem \ref{Thm_off_B}). 
\end{proof}

\begin{proof}[Proof of Theorem~\ref{Thm_Loc_Off}]
We may assume  $D_k\Subset\{\chi_k=1\}$ and put  $\widetilde{D}_k:=\supp\tchik$. We prove the following two statements. First, for every $N,r\in\N$ there is $C=C(N,r)>0$ such that
\begin{equation}\label{eq:loc1-pf}
\Big|B_k(x,y)-k^{2|\Lambda|}\,\tBk\big(\Sk^{-1}(x),\Sk^{-1}(y)\big)\Big|_{\mathscr{C}^r}
\leq C\,k^{-N}
\end{equation}
for all $(x,y)\in D_k\times D_k$ and all $k$ large. Second, for every $N,r\in\N$ and every compact set $K\subset X$ there is $C=C(N,m,K)>0$ such that
\begin{equation}\label{eq:loc2-pf}
|B_k(x,y)|_{\mathscr{C}^m}\leq C\,k^{-N}
\end{equation}
whenever one of $x,y$ lies in $D_k$, the other lies in $K\setminus\widetilde{D}_k$, and $k$ is
large.

Together, these give Theorem~\ref{Thm_Loc_Off}. Indeed, \eqref{eq:loc1-pf} is statement (a) after
rescaling the coordinates. 
For (b), note that for all $\delta>0$, we can adjust  $\epsilon_0>0$ to be small enough such that \[\widetilde{D}_k\subset D_{k,\delta}:=\{y\in D ;
|z_j(y)|< k^{-\lambda_j+\delta}\}.\] Note that  
$y$ lies outside $\widetilde{D}_k$ once $|(\Sk)^{-1}(y)|\geq k^{\epsilon_0}$ and $k$ is large. Then, \eqref{eq:loc2-pf} applies.  
  
  We now start proving \eqref{eq:loc1-pf}. By \emph{localised mild spectral gap} for $\Box_k$ and Lemma \ref{lem:kL}, for any $N\in\N$, there exists $C_{N}$ such that  
 \begin{equation}\label{eq:pff1}
 \begin{split}
   \| \tchik(\Sk)^{-1} \tBk\Sk\chik&-  B_k\,\tchik(\Sk)^{-1} \tBk\Sk\chik\| \\&\leq k^{N_0}\|\Box_k\tchik(\Sk)^{-1} \tBk\Sk\chik\|\leq C_{N}k^{-N}.
 \end{split}
 \end{equation}Combining this fact with \eqref{eq:133}, we have
\begin{equation*}\begin{split}
  \tchik(\Sk)^{-1} \tBk\Sk\chik&\equiv  B_k\,\tchik(\Sk)^{-1} \tBk\Sk\chik\\& \equiv B_kf(k^{-\frac{2}{n}|\Lambda|}\Box_{h^k,\omega_k})\chi_k\\&= B_k\chi_k.   \end{split}
\end{equation*}We set $R_k:=\tchik(\Sk)^{-1} \tBk\Sk\chik-B_k\chi_k\equiv 0$. To improve the regularity, by Lemma \ref{lem:kL}, \begin{equation}\label{432}
    (\Box_k)^m R_k\equiv0,\quad m\in\N.
\end{equation}
We set $\rho_k:X\to [0,1]$ to be a sequence of smooth cut-off functions such that $\rho=1$ on $D_k$ and $\supp\rho_k\subset \{\chi_k=1\}$. To prove $\eqref{eq:loc1-pf}$, we consider \begin{equation*}
    \rho_k\left((\Sk)^{-1}\tBk(\Sk)-B_k\right)\rho_k=\rho_kR_k\rho_k\equiv 0.
\end{equation*}By elliptic estimate and \eqref{432}, for $m\in\N$ \begin{equation*}
    \|\rho_kR_k\rho_k \|_{L^2\to H^{2m}}=O(k^{-\infty}).
\end{equation*}By a duality argument, we also have $\|\rho R_k\rho_k\|_{H^{-2m}\to L^2}=O(k^{-\infty})$. By the  standard interpolation theorem for Sobolev spaces, we have \begin{equation}\label{eq:elliptic_g}
    \|\rho_kR_k\rho_k\|_{H^{-m}\to H^{m}}\leq \|\rho_kR_k\rho_k\|_{H^{-2m}\to L^2}^{1/2}\cdot \|\rho_kR_k\rho_k\|_{L^2\to H^{2m}}^{1/2}=O(k^{-\infty}).
\end{equation} Thanks to the Sobolev inequality, we prove \eqref{eq:loc1-pf} by noting that the Schwartz kernel of $\left(\Sk\right)^{-1}\tBk\Sk$ is given by \[
k^{2|\Lambda|}\tBk\big((\Sk)^{-1}(z), (\Sk)^{-1}(w)\big),
\]where the factor $k^{2|\Lambda|}$ arises from the change of volume form when changing variables. 

To prove the off-diagonal decay \eqref{eq:loc2-pf}, let
$\widehat{\rho}_k\in C^\infty_c(X,[0,1])$
 be a family of cut-off functions such that
\[
\operatorname{supp}\widehat{\rho}_k\subset K,
\qquad
\operatorname{supp}\widehat{\rho}_k
\cap
\widetilde{D}_k
=
\varnothing,
\]
and the \(C^\infty\)-seminorms of $\widetilde{\chi}_k$ grow at most polynomially in \(k\).
By the Sobolev inequality, it is sufficient to prove that, for every
\(m\in\mathbb N\),
\begin{equation}\label{eq:suff}
\left\|
\widehat{\rho}_kR_k\rho_k
\right\|_{H^{-m}\to H^{m}}
=
O(k^{-\infty}).
\end{equation}
Choose a cut-off function \(\widetilde{\widehat{\rho}}_k\in C^\infty_c(X,[0,1])\)
such that $\widetilde{\widehat{\rho}}_k=1$ on a neighbourhood of $\operatorname{supp}\widehat{\rho}_k$, and
$\operatorname{supp}\widetilde{\widehat{\rho}}_k
\cap
\widetilde{D}_k
=
\varnothing$. By the local elliptic estimate, for $m\in\N$,
\[
\left\|
\widetilde{\widehat{\rho}}_kR_k\rho_k
\right\|_{L^2\to H^{4m}}
\leq
C_mk^{N_m}
\left(
\left\|
\widetilde{\widehat{\rho}}_k\Box_k^{2m}R_k\rho_k
\right\|_{L^2\to L^2}
+
\left\|
\widetilde{\widehat{\rho}}_kR_k\rho_k
\right\|_{L^2\to L^2}
\right).
\]
By Lemma \ref{lem:kL} and the fact that \(R_k\equiv0\), we obtain
\[
\left\|
\widehat{\rho}_kR_k\rho_k
\right\|_{L^2\to H^{2m}}
=
O(k^{-\infty}).
\]
By duality,
$\left\|
\widehat{\rho}_kR_k\rho_k
\right\|_{H^{-4m}\to L^2}
=
\left\|
\rho_kR_k^*\widehat{\rho}_k
\right\|_{L^2\to H^{4m}}.
$
Let \(\widetilde{\rho}_k\in C^\infty_c(X,[0,1])\) satisfy
\[
\widetilde{\rho}_k=1
\quad\text{on a neighborhood of }
\operatorname{supp}\rho_k,
\qquad
\operatorname{supp}\widetilde{\rho}_k
\subset
\{\chi_k=1\}.
\]
The local elliptic estimate gives
\[
\left\|
\rho_kR_k^*\widehat{\rho}_k
\right\|_{L^2\to H^{4m}}
\leq
C_mk^{N_m}
\left(
\left\|
\widetilde{\rho}_k\Box_k^{2m}
R_k^*\widehat{\rho}_k
\right\|_{L^2\to L^2}
+
\left\|
\widetilde{\rho}_kR_k^*\widehat{\rho}_k
\right\|_{L^2\to L^2}
\right).
\]
Since $\operatorname{supp}\widehat{\rho}_k\cap\operatorname{supp}\tchik=\varnothing$, $R_k^*\widehat{\rho}_k=\chi_kB_k\widehat{\rho}_k$ by definition. Moreover, since
\(\chi_k=1\) on a neighbourhood of
\(\operatorname{supp}\widetilde{\rho}_k\), we have
\[
\widetilde{\rho}_k\Box_k^{2m}
\chi_kB_k\widehat{\rho}_k
=
\widetilde{\rho}_k\Box_k^{2m}
B_k\widehat{\rho}_k
=
0.
\]
Together with $R_k^*\equiv0$ and the above arguments, we have  $\left\|
\rho_kR_k^*\widehat{\rho}_k
\right\|_{L^2\to H^{4m}}
=O(k^{-\infty})$. Therefore,
\[
\left\|
\widehat{\rho}_kR_k\rho_k
\right\|_{H^{-4m}\to L^2}
=
O(k^{-\infty}).
\]
we then obtain \eqref{eq:suff} by interpolating again.

\end{proof}
    
\section{Full asymptotic expansion: Proof of Theorem \ref{Thm_Full expansion}}\label{sec_6}
In this section, we study the full asymptotic expansion of
$\tBk(z,w)$ on $\Cs^n$ in the quasi-homogeneous cases. We then apply the localization theorem
(see Theorem~\ref{Thm_Loc_Off}) to derive the expansion of $B_k(x,y)$
near a given point $x_0\in X$, under the localized mild spectral gap and the model gap condition. Our argument follows the method developed in \cite{hsiao2022bergman}, with the symbol spaces adapted to the present setting.

We say that a symbol $a(z,w) \in \Cinf(\Cs^n \times \Cs^n)$ belongs to the space $\mathscr{F}$ if, for every $r, N \in \N$, there exist constants $C = C(r,N) > 0$ and $\ell = \ell(r) > 0$ such that\[\lvert a(z,w) \rvert_{\mathscr{C}^r} \leq C \frac{(1+\lvert z \rvert+\lvert w \rvert)^{\ell}}{1+\lvert z-w \rvert^N}, \]for all $(z,w)\in\Cs^n\times\Cs^n$.

For $a\in\mathscr F$, we define the associated integral operator $\operatorname{Op}(a):\mathscr{S}(\Cs^n)\to \mathscr{S}(\Cs^n)$ by 
\[
    \operatorname{Op}(a)u(z)
    :=
    \int_{\Cs^n}a(z,w)u(w)\,dV(w),
    \qquad
    u\in\mathscr{S}(\Cs^n),
\]where $\mathscr{S}(\Cs^n)$ is the Schwartz space of $\Cs^n$. 
We denote the class of such operators by
$\operatorname{Op}(\mathscr F)$, and write
$\sigma(\operatorname{Op}(a))=a$.

If \(a,b\in\mathscr F\), then
\[
    a\# b(z,w)
    :=
    \int_{\Cs^n}a(z,\xi)b(\xi,w)\,dV(\xi)
\]
is absolutely convergent and belongs to $\mathscr F$. This follows from the estimates by taking the
off-diagonal decay orders sufficiently large. Moreover,
\[
    \operatorname{Op}(a)\circ\operatorname{Op}(b)
    =
    \operatorname{Op}(a\# b).
\]


We say a $k$-dependent family of symbols $a_k(z,w) \in \Cinf(\Cs^n \times \Cs^n)$ belongs to the space $k^{m}\mathscr{F}$ for some $m \in \R$ if, for every $r, N \in \N$, there exist constants $C = C(r,N) > 0$, $\ell = \ell(r) > 0$, and $k_0 \in \N$ such that\[\lvert a_k(z,w) \rvert_{\mathscr{C}^r} \leq C k^{m} \frac{(1+\lvert z \rvert+\lvert w \rvert)^{\ell}}{(1+\lvert z-w \rvert)^{N}},\] for all $(z,w) \in \Cs^n \times \Cs^n$ and $k\geq k_0$. We define the corresponding space of operators $k^{m}\operatorname{Op}(\mathscr{F})$ in the same manner. If $P_k\in k^{m_p}\operatorname{Op}(\mathscr F)$ and $Q_k\in k^{m_q}\operatorname{Op}(\mathscr F)$, then $P_k\circ Q_k\in k^{m_p+m_q}\operatorname{Op}(\mathscr F)$.
Equivalently,
\begin{equation}\label{eq:sharp_k}
    k^{m_p}\mathscr F\# k^{m_q}\mathscr F
    \subset
    k^{m_p+m_q}\mathscr F.
\end{equation}

\begin{definition}\label{def:expansion}
    Let $\{m_j\}_{j\geq 0} \subset \R$ be decreasing, with $m_0 = m$ and $m_j \searrow -\infty$. Let $\widehat{a}_j \in \mathscr{F}$. We say that $a_k \in k^{m}\mathscr{F}$ has asymptotic expansion $a_k \sim \sum_{j=0}^{\infty} k^{m_j}\, \widehat{a}_j$ if
    \begin{equation*}
        a_k - \sum_{j=0}^{N} k^{m_j}\, \widehat{a}_j \in k^{m_{N+1}}\mathscr{F}
        \qquad \text{for all } N \in \N .
    \end{equation*}
\end{definition}

Such expansions behave well under composition.

\begin{lemma}\label{lem:symbol_composition}
    Let $p_k \in k^{m_{1,0}}\mathscr{F}$ and $q_k \in k^{m_{2,0}}\mathscr{F}$ have the expansions $p_k \sim \sum_{j\geq 0} k^{m_{1,j}}\widehat{p}_j$ and $q_k \sim \sum_{j'\geq 0} k^{m_{2,j'}}\widehat{q}_{j'}$. List the set $\{\, m_{1,j} + m_{2,j'} : j, j' \in \N_0 \,\}$ in decreasing order as $\{m_{3,l}\}_{l\geq 0}$, and put
    \begin{equation*}
        (p\#q)_l := \sum_{\substack{j,j' \in \N_0 \\ m_{1,j}+m_{2,j'} = m_{3,l}}} \widehat{p}_{j} \# \widehat{q}_{j'} \; \in \mathscr{F} .
    \end{equation*}
    Then
    \begin{equation*}
        p_k \# q_k \sim \sum_{l=0}^{\infty} k^{m_{3,l}}\, (p\#q)_l .
    \end{equation*}
\end{lemma}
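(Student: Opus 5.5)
The plan is to verify the expansion directly from the definitions. First I would show that the class $k^{m}\mathscr F$ is closed under the bilinear composition $\#$, with the expected shift of exponents, i.e.\ \eqref{eq:sharp_k}. Then I would expand both factors to high order and collect terms.

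\textbf{Step 1: bilinear bound.} For $a_k\in k^{m_p}\mathscr F$ and $b_k\in k^{m_q}\mathscr F$ I would estimate $\partial^\gamma_z\partial^\eta_w(a_k\#b_k)(z,w)$. Derivatives fall on the first factor in $z$ and on the second in $w$, so it suffices to bound
\[
\int_{\Cs^n}\frac{(1+|z|+|\xi|)^{\ell_1}(1+|\xi|+|w|)^{\ell_2}}{(1+|z-\xi|)^{N_1}(1+|\xi-w|)^{N_2}}\,dV(\xi).
\]
I would use $1+|\xi|\leq(1+|z|)(1+|z-\xi|)$ to absorb the polynomial growth in $\xi$ into the decay factor $(1+|z-\xi|)^{-N_1}$, by choosing $N_1$ larger than $\ell_1+\ell_2+N+2n+1$. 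I would then use the Peetre-type inequality $(1+|z-w|)^{N}\leq(1+|z-\xi|)^{N}(1+|\xi-w|)^{N}$. Together these give the bound $Ck^{m_p+m_q}(1+|z|+|w|)^{\ell}(1+|z-w|)^{-N}$. Since the exponent $\ell$ depends only on $r$ (through the $\ell$'s of the factors), while $N$ may be taken arbitrarily large, this is exactly membership in $k^{m_p+m_q}\mathscr F$. The same computation shows that $\#$ is well defined on $\mathscr F\times\mathscr F$.

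\textbf{Step 2: truncation.} Fix $L$. Since $m_{1,j},m_{2,j'}\searrow-\infty$, I would choose $J,J'$ so large that $m_{1,J+1}+m_{2,0}<m_{3,L+1}$ and $m_{1,0}+m_{2,J'+1}<m_{3,L+1}$. Write $p_k=P_J+r^1_k$ and $q_k=Q_{J'}+r^2_k$, where $P_J=\sum_{j\leq J}k^{m_{1,j}}\widehat p_j$ (similarly $Q_{J'}$) and $r^1_k\in k^{m_{1,J+1}}\mathscr F$, $r^2_k\in k^{m_{2,J'+1}}\mathscr F$. Expanding bilinearly,
\[
p_k\#q_k=P_J\#Q_{J'}+P_J\#r^2_k+r^1_k\#Q_{J'}+r^1_k\#r^2_k .
\]
By Step 1, the last three terms lie in $k^{m_{1,0}+m_{2,J'+1}}\mathscr F$, $k^{m_{1,J+1}+m_{2,0}}\mathscr F$ and $k^{m_{1,J+1}+m_{2,J'+1}}\mathscr F$ respectively (note $P_J\in k^{m_{1,0}}\mathscr F$, and similarly for $Q_{J'}$). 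All of these are contained in $k^{m_{3,L+1}}\mathscr F$, since $k^{a}\mathscr F\subset k^{b}\mathscr F$ for $a\leq b$.

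\textbf{Step 3: collecting terms.} The main term is the finite sum $\sum_{j\leq J,\,j'\leq J'}k^{m_{1,j}+m_{2,j'}}\widehat p_j\#\widehat q_{j'}$. Group it by the value $m_{3,l}$ of the exponent. For $l\leq L$, every pair with $m_{1,j}+m_{2,j'}=m_{3,l}$ automatically satisfies $j\leq J$ and $j'\leq J'$: if $j>J$, then $m_{1,j}+m_{2,j'}\leq m_{1,J+1}+m_{2,0}<m_{3,L+1}$, a contradiction, and similarly for $j'$. So the group for $m_{3,l}$ is exactly $k^{m_{3,l}}(p\#q)_l$. The remaining pairs have exponents $\leq m_{3,L+1}$, and each such $\widehat p_j\#\widehat q_{j'}$ lies in $\mathscr F$, so these terms contribute to $k^{m_{3,L+1}}\mathscr F$.

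I also need the set $\{m_{1,j}+m_{2,j'}\}$ to be enumerable in decreasing order tending to $-\infty$. This holds because for each level $t$ only finitely many pairs satisfy $m_{1,j}+m_{2,j'}\geq t$ (both sequences are bounded above and tend to $-\infty$). The same finiteness makes each $(p\#q)_l$ a finite sum in $\mathscr F$.

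The only real obstacle is Step 1: keeping the polynomial weight exponent $\ell$ independent of $N$ while absorbing the growth in $\xi$. This is why $N_1$ is chosen after $\ell_1,\ell_2$.
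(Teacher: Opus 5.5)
Your proposal is correct and follows essentially the same route as the paper: you truncate both expansions at a high order, use the composition rule \eqref{eq:sharp_k} to place the three remainder products in $k^{m_{3,L+1}}\mathscr F$, and regroup the finite double sum by exponent. The only differences are these. You prove \eqref{eq:sharp_k} yourself, via $1+|\xi|\leq(1+|z|)(1+|z-\xi|)$ and the Peetre inequality, with $N_1$ chosen after $\ell_1(r),\ell_2(r)$; the paper only asserts this bound. You also derive the index bound (all pairs with exponent $\geq m_{3,L}$ have $j\leq J$, $j'\leq J'$) from the two truncation inequalities, whereas the paper imposes it as a separate condition on $N$.
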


\begin{proof}
   Fix $L \in \N$. We may choose $N$ large enough such that $ m_{1,0} + m_{2,N+1} \leq m_{3,L+1}$, $m_{1,N+1} + m_{2,0} \leq m_{3,L+1}$, and so that $j, j' \leq N$ whenever $m_{1,j}+m_{2,j'} \geq m_{3,L}$. Set $P_{N,k} := \sum_{j\leq N} k^{m_{1,j}}\widehat{p}_j$ and $Q_{N,k} := \sum_{j'\leq N} k^{m_{2,j'}}\widehat{q}_{j'}$, and write $p_k := P_{N,k} + r_{1,k}$ and $q_k := Q_{N,k} + r_{2,k}$.  This gives
    \begin{equation*}
        p_k \# q_k
        = \sum_{j,j'=0}^{N} k^{m_{1,j}+m_{2,j'}}\, \widehat{p}_j \# \widehat{q}_{j'}
        + P_{N,k} \# r_{2,k} + r_{1,k} \# Q_{N,k} + r_{1,k} \# r_{2,k} .
    \end{equation*}
    The last three terms are in $k^{m_{3,L+1}}\mathscr{F}$ by \eqref{eq:sharp_k} and the choice of $N$. We split the double sum and get 
    \[
        p_k \# q_k - \sum_{l=0}^{L} k^{m_{3,l}} (p\#q)_l \in k^{m_{3,L+1}}\mathscr{F} .
    \]
    Since $L$ was arbitrary, this completes the proof.
\end{proof}

We now consider the context of Theorem \ref{Thm_Full expansion} and make the quasihomogeneous assumption, which means that $\phimodk$ coincides with $\phimod$.  
By Theorem \ref{Thm_off_B_mod}, Theorem \ref{Thm_off_B},  \begin{equation*}
    \sBmod(z,w)\in\F \quad ;\quad \tsBk(z,w)\in\F.
\end{equation*}We now define an operator $\hBk\in\Op(\mathscr{F})$ whose symbol is \[
\hBk(z,w)=e^{-\dphik(z)}\sBmod(z,w) e^{\dphik(w)}.
\]Since $\tDk\hBk=(\db+(\tphik)\wedge\cdot)\hBk=0$,  we have $\tsBk\hBk=\hBk$. Let $\hBk^*$ denote the adjoint of $\hBk$ with respect to the inner product  $(\cdot|\cdot)_{\tomegak}$. Then, \begin{equation*}
\hBk^*\,\tsBk=\hBk^*.
\end{equation*}  Since $\Dmod e^{\dphik}\tsBk=(\db+(\db\phimod)\wedge\cdot )e^{\dphik}\tsBk=0$, we have $\hBk \tsBk:\mathscr{S}(\Cs^n)\to \mathscr{S}(\Cs^n)$ satisfying   \[\hBk \tsBk =e^{-\dphik}\sBmod e^{\dphik}\tsBk =\tsBk.\] Denote  $\mathcal{Q}_k:=\hBk-\hBk^*$. We have $(\Id-\mathcal{Q}_k)\tsBk=\hBk^*$. Then, for $N\in\N$, by applying $\sum_{j=0}^N\mathcal{Q}_k^j$ on both sides, we get 
\begin{equation}\label{eq:exp_Q}
(\Id-\mathcal{Q}_k^{N+1})\tsBk=(\Id+\mathcal{Q}_{k}+\cdots+\mathcal{Q}_{k}^{N})\hBk^*.    \end{equation}
Since the map $\widetilde U:u\longmapsto
\sqrt{\frac{(\widetilde\omega_k)^n}{(\omega_0)^n}}\,u$ defines a unitary identification from
$L^2_{\widetilde\omega_k}(\Cs^n)$ onto
$L^2_{\omega_0}(\Cs^n)$, and $\sBmod$ is self-adjoint with
respect to $(\cdot|\cdot)_{\omega_0}$, it follows that \[
\hBk^*
=
\widetilde  U^{-1}e^{\Delta\vphi_k}\sBmod e^{-\Delta\vphi_k}\widetilde  U.
\] Hence, the Schwartz kernel $\hBk^*(z,w)$ of $\hBk^*$ can be computed by \begin{equation}\label{hBk}
\hBk^*(z,w)=\sqrt{\frac{(\omega_0)^n}{(\tomegak)^n}}(z)\cdot e^{\dphik(z)}\sBmod(z,w)e^{-\dphik(w)}\sqrt{\frac{(\tomegak)^n}{(\omega_0)^n}}(w).
\end{equation}Also, the Schwartz kernel $\mathcal{Q}_k(z,w)$ of $\mathcal{Q}_k$ is given by \begin{equation}\label{eq:07161}
  \sBmod(z,w)\Big[e^{-\dphik(z)+\dphik(w)}-e^{\dphik(z)-\dphik(w)}\cdot\sqrt{\frac{(\omega_0)^n}{(\tomegak)^n}(z)\cdot\frac{(\tomegak)^n}{(\omega_0)^n}(w)}\Big]. 
\end{equation} 
Denote $V_k(z):=\sqrt{\frac{(\omega_0)^n}{(\tomegak)^n}(z)}\cdot e^{\dphik(z)}$. Note that $V_k=1$ for $|z|\geq k^{{\epsilon_0}}$. By \eqref{eq:conn}, For any $N,r\in\N$, there exists $C=C(N,r)>0$ such that \begin{equation}\label{eq:431}
    \Big|V_k(z)-\sum_{j=0}^{N}k^{-d_j}v_j(z)\Big|_{\Cr}\leq Ck^{-d_{N+1}},\quad \textit{for }|z|\leq k^{{{\epsilon_0}}},
\end{equation}where $v_j$ are polynomials and $v_0\equiv 1$. If $d_j\notin O^{\even}_{\Lambda}$, then $v_j$ is odd, in the sense that $v_j(-z)=-v_j(z)$. In the same manner, \begin{equation}\label{eq:432}
    \Big|V_k^{-1}(z)-\sum_{j=0}^{N}k^{-d_j}\widehat v_j(z)\Big|_{\Cr}\leq Ck^{-d_{N+1}},\quad \textit{for }|z|\leq k^{{{\epsilon_0}}},
\end{equation} where $\widehat{v}_0\equiv 1$ and $\widehat{v}_j$ is an odd polynomial if $d_j\notin O^\even_\Lambda$.
\begin{lemma}\label{lem:exp}We have the formal asymptotics: 
    \begin{align}\label{lem:eq1}
        \hBk^*(z,w)&\sim \sBmod(z,w)+\sum_{j=0}^\infty k^{-d_j}\sBmod(z,w)b_j(z,w);\\ \label{lem:eq2}
     \mathcal{Q}_k(z,w)&\sim \sum_{j=0}^\infty k^{-d_j}\sBmod(z,w)q_j(z,w),
        \end{align} where $b_j(z,w)$ and $q_j(z,w)$ are odd polynomials if $d_j\notin O^{\even}_{\Lambda}$.
    \end{lemma}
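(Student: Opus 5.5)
The plan is to read off both expansions directly from the explicit kernel formulas \eqref{hBk} and \eqref{eq:07161}, using the factorization through $V_k$. Note that
\[
\hBk^*(z,w)=V_k(z)\,\sBmod(z,w)\,V_k(w)^{-1},
\qquad
\mathcal{Q}_k(z,w)=\sBmod(z,w)\Big[e^{-\dphik(z)+\dphik(w)}-V_k(z)V_k(w)^{-1}\Big].
\]
So both kernels are $\sBmod(z,w)$ times a product of two functions, one in $z$ and one in $w$. By \eqref{eq:conn} and \eqref{eq:431}--\eqref{eq:432}, each factor has an expansion in powers $k^{-d_j}$ whose coefficients are polynomials, odd when $d_j\notin O^{\even}_\Lambda$. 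The factors $e^{\pm\dphik}$ are handled the same way as $V_k^{\pm1}$: exponentiate the expansion of $\dphik$ in the $\varphi^\Lambda_j$, which are $\Lambda$-homogeneous of degree $1+d_j$.

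\emph{Step 1: the formal product.} Multiply the $z$- and $w$-expansions. The exponents $d_i+d_{i'}$ again lie in $O_\Lambda$, because concatenating admissible tuples $\mathbf n$ adds both $d$ and $\ell$. Collecting equal exponents gives
\[
b_j(z,w)=\sum_{d_i+d_{i'}=d_j}v_i(z)\widehat v_{i'}(w)\ \ (\text{minus the } i=i'=0 \text{ term}),
\qquad
q_j=\sum_{d_i+d_{i'}=d_j}\big(\psi^-_i(z)\psi^+_{i'}(w)-v_i(z)\widehat v_{i'}(w)\big),
\]
with the obvious notation for the coefficients of $e^{\mp\dphik}$. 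The $k^0$ term of $\mathcal{Q}_k$ cancels, since both brackets start with $1$.

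\emph{Step 2: parity.} Each coefficient polynomial is a sum of monomials $z^\alpha\bar z^\beta$ with $|(\alpha,\beta)|_\Lambda$ tied to the order and with ordinary degree $|\alpha|+|\beta|\equiv\ell(\mathbf n)\pmod 2$. This parity is additive under products, because $\ell$ is additive under concatenation. Hence a product term at order $d_j$ has parity $\ell$, and if $d_j\notin O^{\even}_\Lambda$ no representing tuple has even $\ell$. Every monomial is then odd jointly in $(z,w)\mapsto(-z,-w)$, which is the sense of oddness for $b_j,q_j$.

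\emph{Step 3: the remainder, which is the main obstacle.} The expansions \eqref{eq:431}--\eqref{eq:432} hold only for $|z|\le k^{\epsilon_0}$, while the polynomials $v_j$ grow. I would handle this by splitting into regions.
\begin{itemize}
\item For $|z|,|w|\le k^{\epsilon_0}$, the Taylor remainder of the product is $O(k^{-d_{N+1}})$ in $\Cr$, up to polynomial factors. Multiplying by $|\sBmod|_{\Cr}\lesssim(1+|z|+|w|)^\ell(1+|z-w|)^{-N'}$ from Theorem~\ref{Thm_off_B_mod} gives an $\F$-type bound with the factor $k^{-d_{N+1}}$.
\item Outside this region, $V_k=1$ and $\dphik=0$ for $|z|\ge k^{\epsilon_0}$, so the true kernel is just $\sBmod$ times bounded factors. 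Each truncated polynomial term is bounded by $k^{-d_j}(1+|z|+|w|)^{M}$. Since $|z|$ or $|w|$ exceeds $k^{\epsilon_0}$, the gain $k^{-d_{N+1}}$ can be absorbed by extra weight $(1+|z|+|w|)^{d_{N+1}/\epsilon_0}$, which the definition of $\F$ allows through the polynomial loss $\ell$.
\item If only one variable is large, either $|z-w|\gtrsim k^{\epsilon_0}$, and the arbitrary off-diagonal decay of $\sBmod$ gives $O(k^{-\infty})$, or both variables are comparable and the previous case applies.
\end{itemize}
This shows the differences lie in $k^{-d_{N+1}}\F$, which is the expansion in the sense of Definition~\ref{def:expansion}.
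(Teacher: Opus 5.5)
Your proposal is correct and follows essentially the same route as the paper: you use the factorization $\hBk^*(z,w)=V_k(z)\sBmod(z,w)V_k(w)^{-1}$ (and the analogous form of $\mathcal{Q}_k$) and multiply the truncated expansions \eqref{eq:431}--\eqref{eq:432}; you then split into the region $|z|,|w|\le k^{\epsilon_0}$ and its complement, where powers of $k$ are absorbed into the polynomial weight allowed in $\F$, and obtain parity by combining polynomials. Your version is slightly more explicit about the parity bookkeeping (additivity of $\ell$ under concatenation) and correctly drops the $i=i'=0$ term from $b_0$ to avoid double-counting the leading $\sBmod$; your third bullet is redundant, since the second already covers the case where either variable exceeds $k^{\epsilon_0}$.
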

\begin{proof}Observe that $\hBk^*(z,w)=V_k(z)\sBmod(z,w) V_k^{-1}(w)$. Take \[b_j(z,w):=\sum_{\substack{j_1,j_1\in\N\\d_{j_1}+d_{j_2}=d_j}}v_j(z)\widehat v_j(w).\] Let $S_{N,k}:=\sum_{j=0}^Nk^{-d_j}v_j(z)$ and $\widehat S_{N,k}:=\sum_{j=0}^Nk^{-d_j}\widehat v_j(z)$. To show \eqref{lem:eq1}, it is sufficient to show that \begin{equation}\label{claim}
   P_{N,k}:=V_k \sBmod V_k-S_{N,k}\sBmod \widehat S_{N,k}\in k^{-\mu_N}\Op(\F), 
\end{equation}where $\mu_N\searrow-\infty$ as $N\nearrow\infty$. 

Recall that $\chi_k(z)=\chi(zk^{-{\epsilon_0}})$. We further define $\chi_k(z,w):=\chi_k(z)\chi_k(w)$. By \eqref{eq:431}, \eqref{eq:432} and $\Bmod\in\F$, it is straightforward to see that for $N\in\N$, \begin{equation}\label{1247}
    \chi_k(z,w)P_{N,k}(z,w)\in k^{-d_{N+1}}\F.
\end{equation}To control $P_{N,k}$ outside $\{|z|\leq \frac{1}{2}k^{{\epsilon_0}}\}\cap\{|w|\leq \frac{1}{2}k^{{\epsilon_0}}\}$, for any $m\in\N$, we can choose a large enough $\ell=\ell(m)$ such that 
   \[
 (1+\lvert z\rvert+\lvert w\rvert)^{\ell}\geq k^{m},\quad \text{for all } (z,w)\in \supp(1-\chi_k).
 \] We can use it to control the $\Cr$-norm of $P_{N,k}(z,w)$ when $(z,w)\in\supp (1-\chi_k(z,w))$. For any $N,r\in\N$, we can choose a large enough $\ell=\ell(r,N)$ and $C=C(r,N)>0$ such that \begin{equation*}
   \lvert V_k(z)V^{-1}_k(w)-S_N(z)\widehat S_N(w)\rvert_{\Cr}\leq k^{-N}(1+\lvert z\rvert+\lvert w\rvert)^{\ell},\quad \text{for } (z,w)\in\supp (1-\chi_k(z,w)).    
 \end{equation*}Combining this with the fact that  $\sBmod\in\F$, we have for all $m\in N$, \[
 (1-\chi_k(z,w))P_{N,k}(z,w)\in k^{-m}\F,\quad. 
 \]  By the above arguments and \ref{1247}, we obtain \ref{claim} and complete the proof of \eqref{lem:eq1}. In the same manner, the proof of \eqref{lem:eq2} can be derived from \eqref{eq:07161} and the similar process above. The last statement follows from the combination of polynomials.  
\end{proof}

\begin{proof}[Proof of Theorem \ref{Thm_Full expansion}]By the unitary identification \eqref{unitary_t}, we have \[
\tBk(z,w)=e^{\tphik(z)}\tsBk(z,w) e^{-\tphik(w)}
\] 
Thanks to the localization property (see Theorem \ref{Thm_Loc_Off}), it is sufficient to study the full expansion of $\tsBk(z,w)$ on a bounded domain in $\Cs^n$ containing the origin. Denote \[
\Qk^{\#j}:=\underbrace{\Qk\#\Qk\#\cdots\#\Qk}_{j-\text{times}}.
\]
From Lemma \ref{lem:symbol_composition} and \eqref{eq:exp_Q}, one derives: \begin{equation}\label{eq:Qk_exp}
    \tsBk \sim \sum_{j=0}^\infty(\Qk)^{\#j}\#(\hBk)^*.
\end{equation}
By Lemma \ref{lem:symbol_composition} again, we substitute the expansions of $\Qk$ and $(\hBk)^*$ given in Lemma \ref{lem:exp}, which are expressed in terms of $\sBmod$ and the polynomials $b_j$ and $q_j$, into the asymptotic sum above. This yields an expansion of the following form:\begin{equation}\label{eq:exp_pf}
    \tsBk\sim \sBmod+\sum_{j=0}^\infty k^{-d_j}\Phi_j(z,w),
\end{equation} for some $\Phi_j(z,w)\in \F$. We obtain the desired full asymptotic expansion for $\tBk(z,w)$, and hence complete the proof of the first statement in Theorem \ref{Thm_Full expansion}.


To prove the second statement in Theorem \ref{Thm_Full expansion}, we assume that $\phimod$ is even and aim to show that $\Phi_{j}$ is an odd function if $d_j\notin O^\even_\Lambda$. Note that the model Bergman
kernels are even in the sense that 
\[
\sBmod(z,w)=\sBmod(-z,-w).
\]
We first make a simple observation.  Let $P(\xi)$ be an even polynomial in the sense that $P(\xi)=P(-\xi)$. By the change of variables
$\xi\mapsto-\xi$,
\begin{multline*}
(\sBmod P\sBmod)(z,w)=\int_{\Cs^n}\sBmod(z,\xi)P(\xi)\sBmod(\xi,w)\,dV_\xi\\
=\int_{\Cs^n}\sBmod(-z,-\xi)P(-\xi)\sBmod(-\xi,-w)\,dV_\xi=(\sBmod P\sBmod)(-z,-w).
\end{multline*}
So $\sBmod P\sBmod$ is even. In the same way, it is odd when $P$ is odd. More generally, let
$P(z,\xi_1,\dots,\xi_p,w)$ be an even (resp.\ odd) polynomial. Then
\[
\int\cdots\int P(z,\xi_1,\dots,\xi_p,w)\,\sBmod(z,\xi_1)\sBmod(\xi_1,\xi_2)\cdots\sBmod(\xi_p,w)
\,dV_{\xi_1}\cdots dV_{\xi_p}
\]
is even (resp.\ odd).

From \eqref{eq:Qk_exp}, \eqref{eq:exp_pf} and Lemma \ref{lem:exp}, we observe that for $j_0\in\N$, \begin{equation*}
 \Phi_{j_0}(z,w)=\sum_{\substack{\ell\in\N\\d_1\cdot \ell\leq d_{j_0} }}\sum_{\substack{(j_1,\cdots,j_\ell)\in\N_0^\ell\\d_{j_1}+\cdots+d_{j_\ell}=d_{j_0}}}(\sBmod\cdot b_{j_\ell})\#\cdots\#(\sBmod\cdot b_{j_2})\#(\sBmod q_{j_1}).   
\end{equation*} 
Each $b_{j}$ in the expression can be decomposed into a sum of even and odd polynomials. If $d_{j_0}\notin O^{\even}_\Lambda$, then for any $(j_1,\cdots,j_\ell)\in\N^\ell_0$ with $d_{j_1}+\cdots+d_{j_{\ell}}=d_{j_0}$ must have the property that \begin{equation*}
    b_{j_1}(z,\xi_1)b_{j_2}(\xi_1,\xi_2)\cdots b_{j_2}(\xi_{\ell-1},\xi_\ell)\cdot q_{j_1}(\xi_\ell,w)
\end{equation*} is an odd polynomial. Hence, $\Phi_{j_0}(z,w)$ is an odd function if $d_{j_0}\notin O^{\even}_\Lambda$. This completes the proof. 
\end{proof}
\section{The decoupled case: Theorem \ref{Thm_de}}\label{sec_de}

Let $L$ be a holomorphic line bundle with smooth Hermitian metric $h$ with $\ii \Theta_h(L)\geq 0$. We say that $x_0$ is of {finite-type} and admits a decoupled model with respect to $h$ if there exists a weight $\Lambda=(\lambda_1,\cdots,\lambda_n)$ such that  $\vphi\sim\sum_{\alpha,\beta\in\N_0^n}a_{\alpha,\beta }z^\alpha\bar z^\beta$ admits the following $\Lambda$-truncated weight function:  \[
    \vphi_\Lambda=\sum_{|(\alpha,\beta)|_\Lambda\leq1}a_{\alpha,\beta}z^\alpha\bar z^\beta=\sum_{j=1}^nP_j(z_j,\bar z_j),
    \] where $P_j$ is real-valued homogeneous polynomial of degree $2m_j\in 2\N$. Since $\ii\Theta_h(L)\geq 0$ and $P_{j}$ contains no pure harmonic terms $\{\operatorname{Re}z_j^\alpha\}_{\alpha\in\N}$, we can see that  $\Delta P_j\geq 0$ and $\Delta P_j\not\equiv 0$. Moreover, $\Delta P_j$ is a homogeneous polynomial of degree $2m_j-2$. Here, $\Delta=\partial_z\partial_{\bar z}$ denotes the standard Laplacian. 
By adjusting the weight $\Lambda$, the definition above is equivalent to saying that the model polynomial is quasi-homogeneous and given by \[\phimod(z)=\sum_{j=1}^nP_j(z_j,\bar z_j).\]  In this section, we aim to prove that both the quasi-homogeneous model gap condition and the localized mild spectral gap condition hold. We first establish the existence of suitable cut-off functions as follows:

\begin{lemma}\label{lem_new}
There exists $a\in C_c^\infty(\mathbb R,[0,1])$ such that $\operatorname{supp} a\subset[-1,1]$ ,  $a\equiv 1$ on $[-\tfrac12,\tfrac12]$
and, for some constant $C>0$,
\[
a''(t)\ge -C\,a(t) \qquad \text{for all } t\in\mathbb R.
\]
\end{lemma}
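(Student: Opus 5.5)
We construct $a$ explicitly from the standard function $e^{-1/s}$, arranging that the only dangerous region, where $a''<0$ while $a\to 0$, is controlled. The key observation is that the condition $a''\ge -Ca$ is automatic wherever $a$ is bounded below by a positive constant, since $a''$ is bounded. It is also automatic wherever $a''\ge 0$. So the issue is only near the boundary of the support, i.e.\ as $|t|\to 1$. There $a$ must be convex, or at least $a''$ must be negative only by an amount comparable to $a$.

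Concretely, we take $a$ even and nonincreasing on $[0,1]$, and set
\[
a(t)=\exp\Bigl(-\frac{1}{1-t^2}\Bigr)\cdot g(t)
\]
near $|t|=1$. Alternatively, and more cleanly, we define $a$ on $[\tfrac12,1]$ by $a(t)=\theta\bigl(\tfrac{1-t}{1/2}\bigr)$, where $\theta(s)$ is a smooth nondecreasing function with $\theta=0$ for $s\le 0$ and $\theta=1$ for $s\ge 1$, and $\theta(s)=e^{-1/s}$ for small $s>0$. For $f(s)=e^{-1/s}$ one computes
\[
f''(s)=e^{-1/s}\Bigl(s^{-4}-2s^{-3}\Bigr),
\]
which is nonnegative for $0<s\le \tfrac12$. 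Hence $\theta$ is convex on some interval $(0,s_0]$. Since $a(t)=\theta(2(1-t))$, we get $a''(t)=4\theta''(2(1-t))\ge 0$ whenever $0<2(1-t)\le s_0$.

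The steps, in order, are as follows.

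\begin{enumerate}
\item Build $\theta$ by gluing: take $\theta=f$ on $[0,s_0]$ with $s_0$ small. On $[s_0,1]$, increase $\theta$ smoothly and monotonically up to $1$, flattening at $s=1$. This is standard: for instance $\theta(s)=\dfrac{f(s)}{f(s)+f(1-s)}$ near the right end. To keep the left end exactly equal to $f$, a simpler route is to use the single global formula $\theta(s)=\dfrac{f(s)}{f(s)+f(1-s)}$ and check convexity near $0$. Near $s=0$ we have $f(1-s)\approx e^{-1}$ with bounded derivatives, so $\theta=f\cdot G$ with $G$ smooth and positive. Then
\[
\theta''=f''G+2f'G'+fG''.
\]
Since $f''/f\sim s^{-4}$ dominates $|f'/f|\sim s^{-2}$, this gives $\theta''>0$ for small $s$.
\item Define $a(t)=\theta(2(1-|t|))$. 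It is smooth, since $\theta$ vanishes to infinite order at $0$ and is flat (identically $1$) near $s\ge 1$, i.e.\ for $|t|\le \tfrac12$. It is supported in $[-1,1]$ and equals $1$ on $[-\tfrac12,\tfrac12]$.
\item Verify the inequality by splitting $[-1,1]$ into the convex boundary layer $1-s_0/2\le |t|\le 1$, where $a''\ge 0\ge -Ca$, and the compact remainder $|t|\le 1-s_0/2$. On the remainder $a\ge \theta(s_0)>0$, so we may take $C=\sup|a''|/\theta(s_0)$. Outside $[-1,1]$ both sides vanish.
\end{enumerate}

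The main obstacle is the convexity of $\theta$ near $0$ after gluing, i.e.\ showing that the $f''G$ term dominates in step 1. This follows from the asymptotics $f''/f\sim s^{-4}$ against $f'/f\sim s^{-2}$, so we expect no real difficulty.
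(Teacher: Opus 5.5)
Your proof is correct and uses essentially the same mechanism as the paper. Near the edge of the support the profile behaves like $e^{-1/\mathrm{dist}}$, so $a''/a\to+\infty$ there; you obtain this as $\theta''>0$ from $f''/f\sim s^{-4}$. On the rest of the support, $a$ is bounded below by a positive constant while $a''$ is bounded. The difference is only in presentation: the paper starts from the bump $e^{-1/(1-t^2)}$ and only asserts that it can be modified on $[-\tfrac23,\tfrac23]$ to equal $1$ on $[-\tfrac12,\tfrac12]$, whereas your global formula $a(t)=\theta(2(1-|t|))$ with $\theta=f/(f+f(1-\cdot))$ carries out that interior gluing explicitly.
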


\begin{proof}
Let
\[
a(s):=\begin{cases} e^{-1/(1-s^2)}, & s>0,\\ 0, & s\le 0.\end{cases}
\]  By direct computation, one can check that $\frac{a''}{a}(t)$ has a uniform lower bound when $|t|<1$. We then smoothly modify the values of $a$ on $[-\frac23,\frac23]$ so that $a>0$ on $[-1,1]$ and $a\equiv1$ on $[-\frac12,\frac12]$.
\end{proof}

\begin{lemma}\label{lem_p}
    Let $\widetilde{P}(z,\bar z)$ be a non-negative real-valued homogeneous polynomial on $\Cs$. Assume that $\widetilde{P}\not\equiv 0$. Then, there exists  $f\in \Cinf(\Cs)\cap L^\infty(\Cs)$ such that $\widetilde{P}+\Delta f\geq c$ on $\Cs$ for some $c>0$. Moreover, there exists $g\in\Cinf_c(\Cs)$ such that $\widetilde{P}+\Delta g\geq 0$ on $\Cs$ and $\widetilde{P}+\Delta g>0$ at the origin.
\end{lemma}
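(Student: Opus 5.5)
The plan is to exploit homogeneity together with the fact that $\widetilde P\ge 0$, $\widetilde P\not\equiv 0$ on $\Cs$. Let $2d=\deg\widetilde P$. Since $\widetilde P$ is homogeneous and nonnegative, $\widetilde P(re^{\ii\theta})=r^{2d}p(\theta)$ with $p\ge 0$ continuous and $p\not\equiv 0$. The zero set of $\widetilde P$ on the unit circle is therefore finite: either $p$ is a nonzero trigonometric polynomial with finitely many zeros, or it has none. In particular $\widetilde P>0$ outside finitely many rays, and the only real defect is near the origin and along these rays. If $d=0$, then $\widetilde P$ is a positive constant and both claims are trivial with $f=g=0$. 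So assume $d\ge1$.

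For the second claim I would choose $g\in\Cinf_c(\Cs)$ radial with $g(z)=\epsilon|z|^2$ near $0$, so that $\Delta g=\epsilon>0$ near the origin. The issue is that $\Delta g$ must become negative somewhere, since $\int\Delta g=0$. I would place that negativity in an annulus $\{1\le |z|\le 2\}$, on which $\widetilde P\ge c_0 r^{2d}p(\theta)$, but this fails along the zero rays of $p$. To avoid that, I would take $g$ non-radial instead: $g=\epsilon\,\chi(z-z_*)|z|^2$-type bumps are awkward, so better is $g(z)=\epsilon\,a(|z|)\,|z|^2$ with $a$ from Lemma~\ref{lem_new}. Then I would compute $\Delta g$ in polar form and use $a''\ge -Ca$ to bound the negative part by $C\epsilon\,a(|z|)$. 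This bounds the negative part by a multiple of something vanishing wherever $a$ vanishes, but it still does not beat the zero rays of $\widetilde P$.

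The cleanest fix is to use a function that is harmonic away from a small neighborhood of the origin and compensates there. Concretely, I would take $g=\epsilon\,\rho\,u$, where $u$ is subharmonic with $\Delta u>0$ near $0$ (for example $u=|z|^2$), and control the negative part of $\Delta g$ by choosing it inside a region where $\widetilde P\ge c>0$. This is possible because $p$ has a zero-free open arc, so $\widetilde P\ge c$ on a sector truncated away from $0$. I would therefore take $g=\epsilon(|z|^2\psi_1-h\psi_2)$, where $h$ is chosen so that the subtracted mass lives in a compact subset $K$ of that sector. Solving $\Delta g=\epsilon(\eta_0-\eta_K)$, with $\eta_0,\eta_K\ge0$ bumps of equal mass supported near $0$ and in $K$, gives $g=\epsilon\,E*(\eta_0-\eta_K)$ with $E$ the fundamental solution. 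This $g$ is compactly supported up to a harmonic error decaying like $O(|z|^{-1})$, since the monopole term cancels. To get exact compact support I would also match the dipole and higher moments, by using several bumps in $K$ whose moments match those of $\eta_0$. The nonnegative functions with support in $K$ provide enough freedom to match finitely many moments; alternatively, I would simply cut off at large radius, where $\widetilde P\sim r^{2d}$ dominates the small error except near the zero rays. Then $\widetilde P+\Delta g\ge \epsilon\eta_0-\epsilon\eta_K+\widetilde P\ge0$ provided $\epsilon\sup\eta_K\le c$, and at the origin the value is $\epsilon\eta_0(0)>0$.

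For the first claim I would sum dyadically rescaled copies. Set $g_j(z):=\lambda^{j}\,g(2^{-j}z)$, so that $\Delta g_j(z)=\lambda^{j}4^{-j}(\Delta g)(2^{-j}z)$. Homogeneity gives $\widetilde P(z)=4^{jd}\widetilde P(2^{-j}z)$, hence $\widetilde P+\Delta g_j\ge0$ whenever $\lambda^j4^{-j}\le 4^{jd}$. Choosing the positive parts to cover all of $\Cs$ (bumps at $0$ in scale $0$, plus rotated copies covering the zero rays at each scale), the sum $f=\sum_j c_j g_j$ with suitable weights gives $\widetilde P+\Delta f\ge c$. The main obstacle is keeping $f$ bounded, since $\sup|g_j|\sim\lambda^j$. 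I would choose $\lambda=1$, which keeps $\sup|g_j|$ bounded but does not sum. So instead of summing over all $j$, I would use a single scale near $0$ and, at large $|z|$, handle the zero rays of $\widetilde P$ with bounded functions such as $f=\log$-type or $\operatorname{Re}$-type angular corrections: $\Delta\big(\theta$-dependent $q(\theta)\big)=r^{-2}q''(\theta)$. That term is too weak at infinity, so this step is where I expect genuine difficulty. A bounded $f$ with $\Delta f\ge c$ on the thin sectors around zero rays where $\widetilde P<c$ needs care: those regions $\{r^{2d}p(\theta)<c\}$ have angular width $\sim r^{-2d/k}$, where $k$ is the vanishing order of $p$. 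I would attempt $f$ depending on the distance to the ray, $f=\phi(s)$ with $s$ the transverse coordinate, $\phi''\ge c$ on $|s|\lesssim r^{1-2d/k}$ and $\phi$ bounded, which works when this width stays bounded, i.e. $2d\ge k$. That inequality should follow because $p(\theta)=r^{-2d}\widetilde P$ vanishes to order at most $2d$ along a ray for a degree-$2d$ polynomial.
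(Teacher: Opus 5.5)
\textbf{There is a genuine gap, in both claims, and it is the same missing idea.}

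Your construction for the second claim is impossible whenever $\widetilde P$ has a zero ray, which is the only nontrivial case. For $g\in\Cinf_c(\Cs)$ and every harmonic polynomial $h$ one has $\int h\,\Delta g\,dV=\int g\,\Delta h\,dV=0$. So $E*\mu$ can be compactly supported only if \emph{all} moments $\int\zeta^m\,d\mu$, $m\ge0$, vanish; there is no finite list of moments to match. Now suppose $\Delta g=\epsilon(\eta_0-\eta_K)$, where $0\le\eta_0\not\equiv0$ is supported in a small disc about $0$ and $\eta_K\ge0$ is supported in a compact $K\subset\{\widetilde P\ge c\}$.
\begin{itemize}
\item $K$ misses the zero rays, and these join $0$ to $\infty$. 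Hence $0$ lies in the unbounded component of $\Cs\setminus K$.
\item Runge's theorem then gives a polynomial $p$ with $\operatorname{Re}p\ge\frac12$ on $\operatorname{supp}\eta_0$ and $|p|\le\frac14$ on $K$.
\item Taking $h=\operatorname{Re}p$ yields $0=\int h\,\Delta g\,dV\ge\frac{\epsilon}{4}\int\eta_0\,dV>0$, a contradiction.
\end{itemize}
If $K$ lies in a half-plane avoiding $0$, even the first moment cannot be matched. Your fallback, cutting off $\epsilon E*(\eta_0-\eta_K)$ at a large radius $R$, leaves an error of size $O(\epsilon R^{-3})$. Its sign changes in the radial direction, and it sits on the zero rays, where $\widetilde P=0$ and nothing absorbs it. That is exactly the difficulty, and the proposal leaves it unresolved.

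The first claim is also incomplete. You do reach the right ingredient: a bounded function $\phi$ of the transverse coordinate $t$ to a zero line. But a bounded $\phi$ with $\phi''\ge c$ near $t=0$ must have $\phi''<0$ somewhere, and you do not say where that negative part goes. Your ``single scale near $0$'' also relies on the compactly supported $g$, which you do not have.

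What is missing is to make the Laplacian positive \emph{along the zero lines themselves}, not only near the origin. The paper does this as follows.
\begin{itemize}
\item For the first claim it uses $\epsilon\sum_j\rho(d(\cdot,L_j)^2)\,d(\cdot,L_j)^2$. Its Laplacian is a positive constant on $\{d(\cdot,L_j)^2\le\frac12\}$ and is $\ge-C\epsilon$ on $\{\frac12\le d(\cdot,L_j)^2\le1\}$.
\item On that outer strip, outside a compact set, $\widetilde P\gtrsim t^k r^{2d-k}$ is uniformly positive, precisely because $k\le2d$ (your own observation).
\item The compact core is handled by $C\rho(|z|^2/R)\log(1+|z|^2)$. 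Its Laplacian is $C(1+|z|^2)^{-2}$ inside and only $O(C\log R/R)$ on the cut-off annulus. That error is absorbed by $\widetilde P$ off the lines and by the strip term on the lines.
\item For the second claim it also cuts each strip off along the line: $g_{j,R}=\rho(t)\,t^2a(s/R)$, with $s$ the coordinate along $L_j$ and $a$ as in Lemma~\ref{lem_new}.
\item On $\{|t|\le\frac12\}$, $\Delta g_{j,R}$ is a positive multiple of $2a(s/R)+R^{-2}t^2a''(s/R)\ge(2-CR^{-2})\,a(s/R)\ge0$. So the longitudinal cut-off creates no negativity on the zero line.
\item The remaining negative parts lie either in the flanks far from the origin, absorbed by $\widetilde P$, or in the core, absorbed by the logarithmic term.
\end{itemize}
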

\begin{proof} Since $\widetilde{P}\geq 0$ is a homogeneous polynomial on $\R^2$ and its restriction to the unit circle has only finitely many zeros, we have 
\[
    \{\widetilde{P}=0\}
    =
    \bigcup_{j=1}^{n_0}L_j,
\]
where $L_j$'s are lines passing through the origin. For each $L_j$, we denote by $d(\cdot,L_j)$ the distance function to $L_j$. Note that $d(\cdot,L_j)^2$ is  a smooth subharmonic function such that  $\Delta d(\cdot,L_j)^2\geq 1$ for all $z\in\Cs$. Let $\rho\in\Cinf_c(\R)$ such that $\rho=1$ on $[-\frac{1}{2},\frac{1}{2}]$ and $\supp\rho\subset[-1,1]$. We may choose a small enough $\epsilon>0$ such that there exist $c>0$ and a compact set $K\Subset\Cs$ satisfying \[ \widetilde{P}+\Delta \left(\epsilon \sum_{j=1}^{n_0}\rho(d(\cdot,L_j)^2)\cdot d(\cdot,L_j)^2\right)\geq c, \]for all $z\in\Cs\setminus K$. In fact, the homogeneity of $\Delta P$ gives a uniform positive lower
bound for $\Delta P$ on the parts of the strips
$\frac{1}{2}\leq d(z,L_j)^2\leq 1$
lying outside an appropriate compact set. We take advantage of this fact and choose
$\epsilon>0$ sufficiently small so that the negative contribution of
$ \Delta\left(\rho\bigl(d(\cdot,L_j)^2\bigr)d(\cdot,L_j)^2\right)$ on these strips is absorbed by the uniform positivity of $\Delta P$. Next, we pick a large enough $C>0$ such that \[ \widetilde{P}+\Delta \left(\epsilon \sum_{j=1}^{n_0}\rho(d(\cdot,L_j)^2)\cdot d(\cdot,L_j)^2+C\log(1+|z|^2)\right)\geq c, \] for all $z\in K$. By direct computation, we can choose a large enough $R>0$ such that \[ \widetilde{P}+\Delta \left(\underbrace{\epsilon \sum_{j=1}^{n_0}\rho(d(\cdot,L_j)^2)\cdot d(\cdot,L_j)^2+C\rho(\frac{|z|^2}{R})\log(1+|z|^2)}_{:=f}\right)\geq \frac{c}{2}, \] for all $z\in\Cs$. Since $f$ is a bounded smooth function, it completes the proof.

We now prove the second assertion. For each $L_j$, we may choose orthonormal coordinates $(s,t)$ with respect to $L_j$ such that $L_j=\{t=0\}$. Take \begin{equation}
    g_{j,R}(s,t):=\rho(t)t^2 a(\frac{s}{R}),\qquad j=1,\cdots,n_0 \,;\, R>0,
\end{equation}where $a$ is given in Lemma \ref{lem_new}. Then, one computes that \[
\Delta g_{j,R}=\left(\rho''(t)t^2+4\rho'(t)t+2\rho(t)\right)a(\frac{s}{R})+R^{-2}\rho(t)t^2a''(\frac{s}{R}).
\] By the conditions $Ca(s)+a''(s)\geq 0$, $a\ge 0$ and $\rho(t)=1$ for $|t|\leq\frac{1}{2}$, we may choose a large enough $R>0$ such that  $\Delta g_{j,R}\geq 0$ for all $|t|\leq \frac{1}{2}$. Moreover, we have $\Delta g_{j,R}=2$ on the compact segment  $\{t=0\}\cap\{|s|\leq \frac{R}{2}\}$, which is included in $L_j$. By an argument similar to that used in the proof of the first statement, we may choose suitable numbers $\epsilon,R$ and $C$ such that  \begin{equation*}
    g:=\epsilon \sum_{j=1}^{n_0}g_{j,R}+C\rho(\frac{|z|^2}{R})\log(1+|z|^2)
\end{equation*}satisfies the required condition.
\end{proof}
One may establish the model gap condition by reducing the dimension and applying the known results for complex dimension one (see  \cite{hsiao2022bergman},\cite{marinescu2024bochner}). Here, we shall rather present an alternative proof. 
\begin{theorem}[Model gap for decoupled model]\label{prop_decouple}
    If $\phimod$ is decoupled, the model gap condition holds in the sense that $\Spec\Box_{\phimod}\cap(0,c)=\varnothing$, for some $c>0$.
\end{theorem}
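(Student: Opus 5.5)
The plan is to prove a uniform lower bound $\|\db u\|^2_{\phimod}\geq c\,\|u-B_{\phimod}u\|^2_{\phimod}$ by working one variable at a time and then gluing the variables together with a tensor product decomposition. The gap is insensitive to bounded perturbations of the weight. Indeed, replacing $\phimod$ by $\phimod+f$ with $f$ bounded changes the $L^2$ norms on functions and on $(0,1)$-forms only by factors $e^{\pm 2\sup|f|}$. It also leaves $\Ker\db$ unchanged as a set, and the orthogonal complement of $\Ker\db$ is controlled by the same distortion. So a spectral gap for one weight gives a gap for the other, with constant multiplied by $e^{-4\sup|f|}$. Since the flat metric $\omega_0$ can be diagonalized by a linear change of coordinates only at the cost of mixing variables, I would first treat $\omega_0$ standard (as in Definition \ref{def:model-gap}). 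If needed, I would then observe that the spectrum of $\Box_{\phimod}$ with respect to a constant Hermitian form is comparable up to constants, since the quadratic form $\|\db u\|^2$ changes by a bounded factor while the $L^2$ norm of $u$ only changes by a constant volume factor.

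In one variable, apply Lemma \ref{lem_p} to $\widetilde P=\Delta P_j\geq0$, $\not\equiv0$, which is homogeneous of degree $2m_j-2$. This gives a bounded smooth $f_j$ with $\Delta(P_j+f_j)\geq c>0$ on $\Cs$. For the weight $\psi_j:=P_j+f_j$, Hörmander's $L^2$ estimate (the Bochner--Kodaira identity on $\Cs$) gives, for every $(0,1)$-form $v=g\,d\bar z$ in the domain of $\db^*_{\psi_j}$,
\[
\|\db^*_{\psi_j}v\|^2_{\psi_j}\geq 2c\,\|v\|^2_{\psi_j}.
\]
Equivalently, on $\Cs$ every $u\perp\Ker\db$ satisfies $\|u\|^2\leq C\|\db u\|^2$. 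By the perturbation remark, the one-variable operator $\Box_{P_j}$ therefore has a spectral gap: $\Spec\Box_{P_j}\subset\{0\}\cup[c_j,\infty)$. One also needs $\Ker\Box_{P_j}\neq\{0\}$, i.e.\ nontrivial holomorphic $L^2$ functions. This holds because $P_j$ grows like $|z|^{2m_j}$ along directions where $P_j>0$, and the perturbed subharmonic weight has positive Laplacian bounded below.

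In $n$ variables, the decoupled structure gives $e^{-2\phimod}dV=\bigotimes_j e^{-2P_j(z_j)}dV(z_j)$, so $L^2_{\phimod}(\Cs^n)=\widehat\bigotimes_j L^2_{P_j}(\Cs)$. With the standard metric, $\Box_{\phimod}=\sum_j \Id\otimes\cdots\otimes\Box_{P_j}\otimes\cdots\otimes\Id$ acting on functions, because $\|\db u\|^2=\sum_j\|\partial_{\bar z_j}u\|^2$. This is a sum of commuting nonnegative self-adjoint operators, so its spectrum is the closure of $\{\mu_1+\cdots+\mu_n:\mu_j\in\Spec\Box_{P_j}\}$. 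Since each $\mu_j\in\{0\}\cup[c_j,\infty)$, every nonzero sum is at least $\min_j c_j$, giving $\Spec\Box_{\phimod}\cap(0,\min_j c_j)=\varnothing$.

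The main obstacle is making the tensor-product and self-adjointness step rigorous. One must check that the Gaffney extension coincides with the closure of the sum on the algebraic tensor product; I would use essential self-adjointness on $\Cinf_c$, since $\Cs^n$ is complete. A secondary point is checking that the bounded perturbation $f_j$ genuinely transfers the gap. I would phrase this via the variational characterization $\inf\{\|\db u\|^2/\|u\|^2: u\perp\Ker\db\}$, noting that orthogonality changes with the weight but the projection distortion is bounded.
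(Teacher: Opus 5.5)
Your argument is correct, but it is not the paper's route. The paper never separates variables. It takes the same bounded functions $f_j$ from Lemma \ref{lem_p} and sets $F=\sum_j f_j(z_j)$. By decoupling, $\ii\partial\db(\phimod+F)$ is diagonal with entries $\Delta(P_j+f_j)\geq c$, hence uniformly positive on $\Cs^n$. The paper then applies H\"ormander's estimate once in $\Cs^n$, together with the bounded-perturbation comparison of norms.

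That approach avoids what you identify as your main obstacle, namely identifying the Gaffney extension with the tensor sum. It is also the version that survives without exact product structure. The same ``bounded correction plus H\"ormander'' proof is reused in Proposition \ref{prop_goodmodel}, where cross terms $a_\alpha|z^\alpha|^2$ couple the variables; being plurisubharmonic, they only add positivity, and no tensor decomposition is available there.

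Your route is the dimension reduction the paper mentions but does not pursue. It costs the self-adjointness bookkeeping, and your sketch of it is sound: the tensor sum is self-adjoint, has $\Cinf_c(\Cs^n)$ in its domain and agrees with $\Box_{\phimod}$ there, so essential self-adjointness and maximality of self-adjoint operators force equality. In return it gives more. $\Spec\Box_{\phimod}$ is the closure of the sums of the one-variable spectra, and the model Bergman kernel factorizes as $\prod_j B_{P_j}(z_j,w_j)$, which is convenient for computing $B_\Lambda(0,0)$.

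One small correction. You do not need $\Ker\Box_{P_j}\neq\{0\}$, since sums of elements of $\{0\}\cup[c_j,\infty)$ lie in $\{0\}\cup[\min_j c_j,\infty)$ anyway. Your justification for it is also not an argument, because $P_j$ can be negative on a sector. For example, $P=|z|^4+\tfrac23|z|^2(z^2+\bar z^2)$ has $\Delta P=8(\operatorname{Re}z)^2\geq0$ but is negative on the imaginary axis away from $0$. If you want nontriviality of the kernel, use H\"ormander with a logarithmic pole as in Lemma \ref{lemo}.
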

\begin{proof}
Taking $\widetilde{P}=\Delta P_j$ in Lemma \ref{lem_p}, we associate with each $P_j$ an adjustment function $f_j$. Set $F=\sum_{j=1}^nf_j$, which is bounded and smooth. Then, the norms $\|\cdot\|_{\phimod}$ and $\|\cdot\|_{\phimod+F}$ are compatible. Moreover, $\ii\partial\db (\phimod+F)>c$ uniformly for some $c>0$. Hence, we can apply H\"{o}rmander's $L^2$-estimates; see \cite{hormander1973introduction}, and the compatibility of norms to conclude that for any $(0,1)$-form $\eta\in L^2_{\phimod}(\Cs^n,\Lambda^{0,1}T^*\Cs^n)$ with $\db \eta=0$, there exists $u\in L^2_{\phimod}(\Cs^n)$ such that $\db u=\eta$ and   \[
 \|u\|_{\phimod}\lesssim \|u\|_{\phimod+F}\leq c^{-1}\|\eta\|_{\phimod+F}\lesssim \|\eta\|_{\phimod}.
 \]For $u\in L^2_{\phimod}(\Cs^n)$ with $u\perp \Ker\Box_{\phimod}$, we have  $\|u\|_{\phimod}^2\lesssim\|\db u\|_{\phimod}^2=(\Box_{\phimod}u|u)$. By the Cauchy-Schwarz inequality, we deduce $\|u\|_{\phimod}\lesssim \|\Box_{\phimod}u\|$, which implies the desired spectral gap.     
\end{proof}
The following result is an easy consequence of the approach of the previous proof.
\begin{proposition}
    If $\phimod$ is quasihomogeneous and $\ii\partial\db \phimod(z)>0$ for all $z\neq 0$, then $\Spec\Box_{\phimod}\cap(0,c)=\varnothing$ for some $c>0$. 
\end{proposition}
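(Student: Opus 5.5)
The plan is to copy the proof of Theorem~\ref{prop_decouple}. The only step that changes is the construction of a bounded smooth correction $F$ that makes $\phimod+F$ uniformly strictly plurisubharmonic. Once such an $F$ is found, the norms $\|\cdot\|_{\phimod}$ and $\|\cdot\|_{\phimod+F}$ are equivalent, since $F$ is bounded. H\"ormander's $L^2$-estimate for the weight $\phimod+F$ then solves $\db u=\eta$ with $\|u\|_{\phimod}\lesssim\|\eta\|_{\phimod}$. Applied to $u\perp\Ker\Box_{\phimod}$, this gives $\|u\|^2_{\phimod}\lesssim(\Box_{\phimod}u|u)$. By Cauchy--Schwarz, $\|u\|\lesssim\|\Box_{\phimod}u\|$, and this is exactly the spectral gap.

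To build $F$, I would use quasi-homogeneity. Write $\delta_t(z)=(t^{\lambda_1}z_1,\dots,t^{\lambda_n}z_n)$, so that $\phimod\circ\delta_t=t\,\phimod$. Differentiating gives $\partial_{z_i}\partial_{\bar z_j}\phimod(\delta_t z)\,t^{\lambda_i+\lambda_j}=t\,\partial_{z_i}\partial_{\bar z_j}\phimod(z)$.

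Consider the compact weighted sphere $\Sigma=\{|z|_\Lambda=1\}$. The hypothesis $\ii\partial\db\phimod>0$ on $\Sigma$ gives a constant $c_0>0$ with $\ii\partial\db\phimod(z)\ge c_0\,\omega_{\mathrm e}$ on $\Sigma$. Every $z$ with $|z|_\Lambda=t\ge1$ is of the form $\delta_t(\xi)$ with $\xi\in\Sigma$. The scaling relation then gives the bound
\[
\partial\db\phimod(z)(v,\bar v)\ge c_0\,t\sum_j t^{-2\lambda_j}|v_j|^2 .
\]
Each $\lambda_j\le1/2$, because a weight vector with some $\lambda_j>1/2$ could not support a positive definite model. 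More carefully, if $\lambda_j>1/2$, then $|z_j|^2$ has weighted degree $>1$ and cannot appear in $\phimod$. I would use this to show that on $\{|z|_\Lambda\ge1\}$ the Levi form dominates $c_0\,\omega_{\mathrm e}$, because $t^{1-2\lambda_j}\ge1$. If the weight vector does have some $\lambda_j>1/2$, I would instead argue directly from homogeneity that the Levi form stays bounded below on the unbounded region, possibly after rechecking the degeneracy direction.

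On the compact set $\{|z|_\Lambda\le1\}$, the only degeneracy is at the origin. There I would add $F=C\rho(|z|^2/R)\log(1+|z|^2)$, as in Lemma~\ref{lem_p}, where $\rho$ is a cutoff equal to $1$ near $0$. The term $\ii\partial\db\log(1+|z|^2)$ is strictly positive on compact sets, and the cutoff error is supported where $\phimod$ already has a positive Levi form with a margin. Choosing $R$ large first and then $C$ makes the cutoff error, of size $O(C/R)$ times a log factor, absorbable.

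The main obstacle is the uniform lower bound at infinity: showing that $\inf_{|z|_\Lambda\ge1}$ of the smallest eigenvalue of $\ii\partial\db\phimod$ is positive. This is where the condition $\lambda_j\le 1/2$ (equivalently $t^{1-2\lambda_j}\ge1$) must be checked, and I would first try to derive it from positivity of the Levi form, which forces $|z_j|^2$-type terms to have weighted degree $\le1$.
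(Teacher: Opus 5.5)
Your proposal is correct and follows the same route as the paper: a bounded correction making $\phimod+F$ uniformly strictly plurisubharmonic, followed by H\"ormander's estimate and equivalence of norms. The paper uses the simpler compactly supported bump $\epsilon\rho(|z|^2)|z|^2$ and leaves the uniform lower bound of $\ii\partial\db\phimod$ at infinity implicit, which your scaling argument supplies.

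The obstacle you flag closes in one line. $\partial_{z_j}\partial_{\bar z_j}\phimod$ is $\Lambda$-quasi-homogeneous of degree $1-2\lambda_j$, so it vanishes identically if $\lambda_j>\frac12$, contradicting $\ii\partial\db\phimod>0$; hence your fallback branch is vacuous. With your logarithmic correction the cutoff error is $O(C\log R/R)$, so you should fix any $C>0$ (this suffices because $\ii\partial\db\phimod\geq 0$) and only then take $R$ large.
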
\begin{proof}Let $\rho\in\Cinf_c(\R)$ such that $\rho=1$ on $[-\frac{1}{2},\frac{1}{2}]$ and $\supp\rho\subset[-1,1]$. We may choose a small  enough $\epsilon>0$ such that \[
\ii \partial\db \left(\phimod+\epsilon\cdot \rho(|z|^2)|z^2|\right)>c,
\]uniformly for some $c>0$. We then use  H\"{o}rmander's $L^2$-estimate again to get the desired spectral gap.
    \end{proof}
\begin{lemma}\label{lemo}
  Under the decoupled assumption, $B_{\Lambda}(0,0)>0$.
\end{lemma}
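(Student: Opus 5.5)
To prove Lemma~\ref{lemo}, I would use the extremal characterization of the on-diagonal Bergman kernel:
\[
B_\Lambda(0,0)=\sup\Big\{\frac{|u(0)|^2}{\|u\|^2}\;;\;u\in\Ker\db\cap L^2_{\phimod,\omega_0}(\Cs^n),\ u\neq 0\Big\},
\]
where the norm is taken with respect to the measure $\beta(x_0)^{-1}e^{-2\phimod}dV$. Since this measure is comparable to $e^{-2\phimod}dV$, it suffices to produce one holomorphic function $u$ with $u(0)\neq0$ and $\int_{\Cs^n}|u|^2e^{-2\phimod}dV<\infty$.

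The decoupled structure $\phimod=\sum_j P_j(z_j)$ means the weight factorizes, $e^{-2\phimod}=\prod_j e^{-2P_j(z_j)}$. So I would look for $u(z)=\prod_j u_j(z_j)$, with each $u_j$ entire on $\Cs$, $u_j(0)\neq0$, and $\int_{\Cs}|u_j|^2e^{-2P_j}\,dV<\infty$. Fubini then gives the finiteness of the norm of $u$. This reduces everything to a one-dimensional problem for a single weight $P=P_j$, which is homogeneous of degree $2m$ with $\Delta P\ge0$ and $\Delta P\not\equiv0$. Note that $P$ itself may fail to be nonnegative in some directions, so the constant function $1$ need not be integrable.

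To handle a single weight, I would use the second assertion of Lemma~\ref{lem_p} with $\widetilde P=\Delta P$. This produces $g\in\Cinf_c(\Cs)$ such that $\Delta(P+g)\ge0$ everywhere and $\Delta(P+g)>0$ at the origin. On top of that I would add a compactly supported perturbation $\epsilon\,\rho(|z|^2)|z|^2$ with a log-type correction, exactly as in the proof of Lemma~\ref{lem_p}. After this, the modified weight $\psi=P+g+h$ is subharmonic, equal to $P$ outside a compact set, comparable to $P$ up to a bounded error, and strictly subharmonic near $0$. I then intend to solve a $\db$-problem in the weight $2\psi+2\log|z|\cdot\theta$, where $\theta$ is a cutoff near $0$, so that the singular term forces vanishing of the error at $0$. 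Concretely, take $u=\theta-v$ with $\db v=\db\theta$. Hörmander's $L^2$ estimate applies, since $\db\theta$ is supported where $\Delta\psi$ is positive and this support is compact and avoids $0$. It yields $v$ with finite weighted norm, and the singularity of the weight forces $v(0)=0$. Hence $u(0)=1$, and $u$ lies in $L^2_{P}$ because $\psi-P$ is bounded.

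The main obstacle is the Hörmander step, because curvature positivity is only available on the region produced by Lemma~\ref{lem_p}. The estimate $\int|v|^2e^{-2\Psi}\le\int|\db\theta|^2e^{-2\Psi}/\Delta\Psi$ requires $\Delta\Psi>0$ on $\supp\db\theta$, while elsewhere we only have $\Delta\Psi\ge0$ in the sense of currents. I would first try to arrange this by choosing $\theta$ supported inside the disc where $\Delta(P+g)>0$. If the degenerate set of $\Delta\Psi$ still causes trouble, the fallback is a simpler route: apply Theorem~\ref{prop_decouple} in dimension one, use the resulting spectral gap to get $\|v\|\lesssim\|\db\theta\|$ for the $L^2$-minimal solution, and then control $v(0)$ by a local mean-value estimate. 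In that version one must check that $|v(0)|<1$ after rescaling $\theta$ to a small disc. This should work because $\|\db\theta_\delta\|_{L^2}$ is scale-invariant in dimension one, which is the delicate point to verify.
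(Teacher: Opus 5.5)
Your reduction to one variable via $u=\prod_j u_j(z_j)$ and Fubini is correct. You are also right that $P_j$ may be negative in some directions, so the constant $1$ need not be in $L^2$. The reduction has a small bonus: in one variable a pole $\log|z|$ already makes $e^{-2\Psi}\sim|z|^{-2}$ non-integrable, whereas on $\Cs^n$ one needs $n\log|z|$ (the paper's $\log(|z|^2)$ suffices only for $n\le 2$).

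\textbf{The gap is in the one-variable $\db$-step.} Hörmander's estimate needs the whole weight $\Psi=\psi+\theta\log|z|$ to be subharmonic with $\Delta\Psi>0$ on $\supp\db\theta$, but you only check $\Delta\psi$ there.
\begin{itemize}
\item Since $\theta\log|z|$ has compact support, $\int_{\Cs}\Delta(\theta\log|z|)=0$. So the Dirac mass at $0$ is balanced by a part of net mass equal to minus that Dirac mass, living on $\supp\nabla\theta=\supp\db\theta$.
\item If $\theta$ drops from $1$ to $0$ near radius $r$, this part has negative values of order $r^{-2}(1+|\log r|)$.
\item Your $\psi$ (second assertion of Lemma~\ref{lem_p} plus compactly supported bumps) has only unquantified positivity on some fixed neighbourhood of $0$. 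Nothing guarantees $\Delta\Psi\ge 0$ on $\supp\db\theta$.
\item Shrinking $\theta$, your proposed remedy, makes the negative part larger. You cannot push the transition outward either: $\psi=P$ outside a compact set, and $\Delta P$ in general vanishes along lines through $0$.
\end{itemize}

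\textbf{The fallback fails for the same scaling reason.} The mean-value inequality on $\{|z|<\delta/2\}$ gives $|v(0)|^2\lesssim\delta^{-2}\|v\|^2\lesssim\delta^{-2}c^{-1}\|\db\theta\|^2$. This blows up as $\delta\to0$ precisely because $\|\db\theta_\delta\|$ is scale invariant. For large $\delta$, the factors $\sup_{|z|<\delta}e^{2P}$ and $\int|\db\theta_\delta|^2e^{-2P}$ grow like $e^{C\delta^{2m}}$. In fact, for the $L^2$-minimal $v$ one has $\theta-v=B\theta$, so showing $v(0)\neq 1$ is just a restatement of the claim.

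\textbf{The missing ingredient is quantitative positivity where the pole is cut off.} The paper gets it from the \emph{first} assertion of Lemma~\ref{lem_p}: a bounded $f$ with $\Delta(P+f)\ge c>0$ on all of $\Cs$ (summed to $F$ on $\Cs^n$).
\begin{itemize}
\item The pole is then cut off at a large radius $R$, where its negative curvature is $O(R^{-2}\log R)<c/2$. Hence $\widetilde\varphi=P+f+\rho(|z|^2/R^2)\log|z|$ satisfies $\Delta\widetilde\varphi\ge c/2$ as a current.
\item For any cutoff $\chi$ with $\chi(0)=1$, Hörmander gives $v$ with $\db v=\db\chi$ and $\|v\|^2_{\widetilde\varphi}\le\frac{2}{c}\|\db\chi\|^2_{\widetilde\varphi}$.
\item Non-integrability of $e^{-2\widetilde\varphi}$ at $0$ forces $v(0)=0$.
\item Since $\widetilde\varphi\le P+\sup|f|+\log R$, you get $\chi-v\in L^2(e^{-2P})$.
\end{itemize}
If you want to keep your compactly supported $g$, homogeneity also repairs the argument. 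Suppose $\Delta(P+g)\ge c_0$ on $\{|z|<r_0\}$. Then $g_\lambda:=\lambda^{2m}g(\cdot/\lambda)$ is still bounded and satisfies $\Delta(P+g_\lambda)(z)=\lambda^{2m-2}\Delta(P+g)(z/\lambda)$. This is $\ge 0$ everywhere and $\ge\lambda^{2m-2}c_0$ on $\{|z|<\lambda r_0\}$. That is enough to absorb a log cutoff at radius of order $\lambda r_0$ once $\lambda$ is large.
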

\begin{proof}Notice that it is sufficient to find a holomorphic function $h$ such that $h(0)=1$ and  $\int|h|^2 e^{-2\phimod}dV<\infty$. By Lemma \ref{lem_p} , there exists $f\in\Cinf(\Cs^n)\cap L^\infty$ such that $\ii \partial\db (\phimod+F)>c$ uniformly for some $c>0$. Let $\rho\in\Cinf_c(\R)$ such that $\rho=1$ on $[-\frac{1}{2},\frac{1}{2}]$ and $\supp\rho\subset[-1,1]$. One may choose a large enough $R>0$ such that  
    \[
    \ii \partial\db (\underbrace{\phimod+F+\rho(\frac{|z|^2}{R^2})\log(|z|^2)}_{\widetilde{\vphi}})\geq \frac{c}{2}\quad \text{in the sense of current.}
    \]Let $v\in\Cinf_c(\Cs^n,[0,1])$ such that $v(0)=1$. By H\"{o}rmander $L^2$-estimates, there exists $v'\in\Cinf(\Cs)$ such that $\db v'=\db v$ and \[
    \|v'\|^2_{\widetilde{\vphi}}\leq \frac{2}{c}\|\db v\|^2_{\widetilde{\vphi}}.
    \]Since $v-v'$ is holomorphic and $ \|v-v'\|_{\widetilde{\vphi}}<\infty$, one can deduce $v(0)=v'(0)$ from the fact that $\widetilde{\vphi}(0)=-\infty$. Hence, $h:=v-v'$ is the desired holomorphic function.
\end{proof}
We now establish the localized mild spectral gap for a decoupled model. The proof follows the approach of \cite{hsiao2014asymptotics}, with some modifications. We first recall Demailly's extension of H\"{o}rmander's $L^2$-estimates to semipositive line bundles. 
Recall that the curvature operator associated with $h$ is given by
$A:=[\ii \Theta_h(L),\Lambda_\omega]$, where $\Lambda_\omega$ denotes the adjoint of the Lefschetz operator $\omega\wedge \cdot$. Since $\ii\Theta_h(L)\geq 0$, $A$ is a positive semidefinite Hermitian endomorphism of $\Lambda^{n,1}T_x^*X\otimes L_x$. We define $|\cdot|_{\Theta_h(L)}:\Lambda^{(n,1)}T^*_xX\otimes L_x\to [0,\infty]$ as follows.
For $\eta\in \Lambda^{(n,1)}T_x^{*}X\,\otimes\,L_x $, \[
|\eta|_{\Theta_h(L)}^2
:=
\inf\left\{
C\geq0:
\big|\langle \eta,\xi\rangle_{h,\omega}\big|^2
\leq
C
\big\langle
A\xi,\xi
\big\rangle_{h,\omega}
\ \text{for all }
\xi\in \Lambda^{(n,1)}T_x^{*}X \otimes L_x
\right\}.
\]Here, we adopt the convention that $|\eta|_{\Theta_h(L)}=+\infty$ if no such $C<+\infty$ exists.
\begin{theorem}{\cite[Thm.~4.1]{Demailly_L2_semipositive}}\label{Dema}
Let $(X,\omega)$ be a complete Kähler manifold, and let $(L,h)$ be a holomorphic line bundle with smooth Hermitian metric $h$ such that $\ii \Theta_h(L)\geq 0$. Then, for any form $\eta\in L^2_{h,\omega}(X,\Lambda^{(n,1)}T^{*}X\otimes L)$ satisfying $\db \eta=0$ and \[\int_X|\eta|^2_{\Theta_h(L)}dV_{\omega}<\infty,\] there exists $u\in L^2(X, \Lambda^{(n,0)}T^{*}X\otimes L)$ with $\db u=\eta$ and\[
\int_X |u|^2_{h,\omega}dV_\omega \leq \int_X |\eta|^2_{\Theta_h(L)}dV_\omega.
\]  
\end{theorem}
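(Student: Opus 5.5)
The plan is the standard Hörmander--Demailly duality argument. The two ingredients are the Bochner--Kodaira--Nakano inequality in bidegree $(n,1)$ and a Riesz representation step; the pointwise definition of $|\cdot|_{\Theta_h(L)}$ is tailored exactly to the Cauchy--Schwarz step.

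\emph{Step 1: a priori inequality.} Since $\omega$ is Kähler, there are no torsion terms. For $L$-valued $(n,q)$-forms there is no Ricci contribution, since $K_X\otimes K_X^{*}$ is trivial. So the Bochner--Kodaira--Nakano identity gives, for $v\in\Cinf_c(X,\Lambda^{n,1}T^*X\otimes L)$,
\[
\|\db v\|^2_{h,\omega}+\|\db^*_{h,\omega}v\|^2_{h,\omega}\geq \int_X\langle Av,v\rangle_{h,\omega}\,dV_\omega ,\qquad A=[\ii\Theta_h(L),\Lambda_\omega]\geq 0 .
\]
Because $\omega$ is complete, the Andreotti--Vesentini density lemma shows that $\Cinf_c$ is dense in $\Dom\db\cap\Dom\db^*$ for the graph norm $\|v\|+\|\db v\|+\|\db^*v\|$. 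Hence the inequality extends to all $v\in\Dom\db\cap\Dom\db^*$, where $\db^*$ now denotes the Hilbert space adjoint. The proof of the density lemma uses cut-offs $\rho_\nu$ with $|d\rho_\nu|_\omega\leq 1/\nu$, which exist by completeness, followed by Friedrichs mollification.

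\emph{Step 2: the bounded functional.} Put $C_\eta:=\int_X|\eta|^2_{\Theta_h(L)}\,dV_\omega<\infty$. For $v\in\Dom\db^*$, decompose $v=v_1+v_2$ with $v_1\in\Ker\db$ and $v_2\in(\Ker\db)^\perp=\overline{\operatorname{Im}\db^*}$. Three facts follow:
\begin{itemize}
\item $\db^*v_2=0$, so $v_1\in\Dom\db\cap\Dom\db^*$ with $\db v_1=0$ and $\db^*v_1=\db^*v$.
\item $(\eta|v)=(\eta|v_1)$, since $\eta\in\Ker\db$.
\item By the definition of $|\cdot|_{\Theta_h(L)}$, pointwise $|\langle\eta,v_1\rangle|^2\leq|\eta|^2_{\Theta_h(L)}\langle Av_1,v_1\rangle$. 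This holds wherever $|\eta|_{\Theta_h(L)}<\infty$, which is almost everywhere since $C_\eta<\infty$.
\end{itemize}
Integrating the pointwise bound and applying Cauchy--Schwarz in $L^1$ gives
\[
|(\eta|v)|^2\leq C_\eta\int_X\langle Av_1,v_1\rangle\,dV_\omega\leq C_\eta\|\db^*v_1\|^2=C_\eta\|\db^*v\|^2 ,
\]
where the middle inequality is Step 1 applied to $v_1$.

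\emph{Step 3: Riesz representation.} Consider the antilinear functional $\db^*v\mapsto(\eta|v)$ on $\operatorname{Im}\db^*\subset L^2(X,\Lambda^{n,0}T^*X\otimes L)$. By Step 2 it is well defined (it vanishes when $\db^*v=0$) and has norm at most $C_\eta^{1/2}$. Extend it by zero on the orthogonal complement and apply the Riesz theorem. This produces $u$ with $\|u\|^2\leq C_\eta$ and $(u|\db^*v)=(\eta|v)$ for all $v\in\Dom\db^*$. This identity says exactly that $u\in\Dom\db$ and $\db u=\eta$ in the weak, hence distributional, sense, which is the claim.

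\emph{Main obstacle.} The delicate point is Step 1: justifying the curvature inequality for every $v$ in the domain on a noncompact manifold, which is where completeness and the Kähler condition enter. A second subtlety is that $A$ is only semidefinite, so one cannot simply invert $A$. I would handle this purely through the infimum definition of $|\eta|_{\Theta_h(L)}$, which gives the required pointwise Cauchy--Schwarz bound without any inversion. If measurability of $|\eta|_{\Theta_h(L)}$ became an issue, I would first run the argument with $A+\varepsilon\Id$, for which $\langle(A+\varepsilon)^{-1}\eta,\eta\rangle\uparrow|\eta|^2_{\Theta_h(L)}$. In that case I would also twist by $e^{-\varepsilon\psi}$ with a suitable exhaustion $\psi$, so that the curvature term dominates $\varepsilon\omega$ on the relevant region, and then let $\varepsilon\to0$, using weak compactness of the resulting uniformly bounded family of solutions $u_\varepsilon$.
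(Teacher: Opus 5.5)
Your argument is correct and is the standard H\"ormander--Demailly duality proof behind the cited \cite[Thm.~4.1]{Demailly_L2_semipositive}; the paper itself only quotes the result and gives no proof. You use the Bochner--Kodaira--Nakano inequality in bidegree $(n,1)$, extend it to $\Dom\db\cap\Dom\db^*$ by completeness, apply the pointwise Cauchy--Schwarz bound built into $|\cdot|_{\Theta_h(L)}$, and conclude with Riesz representation.

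Two details are worth adding. Since $A$ need not be bounded on noncompact $X$, passing the a priori inequality from $\Cinf_c$ to the graph-norm closure needs $A\geq 0$ and Fatou's lemma along an a.e.\ convergent subsequence. Your $\varepsilon$-twisting fallback is unnecessary, because the monotone limit $\langle (A+\varepsilon)^{-1}\eta,\eta\rangle\uparrow|\eta|^2_{\Theta_h(L)}$ already gives measurability.
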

\begin{lemma}\label{lem_an}
If $x_0$ is of finite-type and admits a local decoupled model at $x_0\in X$, then there exists another smooth semipositive Hermitian metric $\hat h$ such that $\ii\Theta_{\hat h}(L)>0$ at $x_0$. Moreover, $\hat h= h$ on $X\setminus D$.
\end{lemma}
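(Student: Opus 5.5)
The idea is to modify the local weight near $x_0$ by adding a compactly supported smooth function built from the second part of Lemma~\ref{lem_p}. Work in the decoupled coordinates $z=(z_1,\dots,z_n)$ on the polydisc $D$, where $\varphi=\sum_j P_j(z_j)+R(z)$. Set $\hat h:=h\,e^{-2\epsilon G}$, with $G\in\Cinf_c(D)$ to be chosen, so that the new weight is $\hat\varphi=\varphi+\epsilon G$. Then $\hat h=h$ off $\supp G\subset D$, and $\ii\Theta_{\hat h}=\ii\Theta_h+2\epsilon\,\ii\partial\db G$. Two things must be checked. First, this form stays $\geq 0$ everywhere. Second, it becomes $>0$ at $x_0$.

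For each $j$, apply the second assertion of Lemma~\ref{lem_p} to $\widetilde P=\Delta P_j\geq 0$, which is $\not\equiv 0$ and homogeneous of degree $2m_j-2$. This gives $g_j\in\Cinf_c(\Cs)$ with $\Delta P_j+\Delta g_j\geq 0$ on $\Cs$ and $>0$ at $0$. Because $\Delta P_j$ is homogeneous, I would rescale: put $g_{j,r}(z_j):=r^{2m_j}g_j(z_j/r)$. Then $\Delta P_j+\Delta g_{j,r}$ is the rescaling $r^{2m_j-2}(\Delta P_j+\Delta g_j)(z_j/r)$, so the same positivity holds, and $\supp g_{j,r}\subset\{|z_j|\le Cr\}$. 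Define
\[
G(z):=\sum_{j=1}^n g_{j,r}(z_j)\,\theta(z),
\]
where $\theta\in\Cinf_c(D)$ equals $1$ on a neighbourhood of the polydisc of radius $Cr$. In fact the factors $g_{j,r}(z_j)$ are not compactly supported in the other variables, so $\theta$ is genuinely needed. The decoupled part then gives $\ii\partial\db(\phimod+G)=\sum_j\ii(\Delta P_j+\Delta g_{j,r})\,dz_j\wedge d\bar z_j\geq 0$ on $\{\theta=1\}$, and this form is strictly positive at $0$ in every direction. That settles positivity at $x_0$ for $\phimod+G$, and at $x_0$ the remainder $R$ contributes nothing to $\ii\partial\db$, since its weighted degree exceeds one and so it has vanishing $2$-jet in the relevant sense. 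Hence $\ii\Theta_{\hat h}(x_0)>0$ once the multiplier below is positive.

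The main obstacle is global semipositivity, because $\ii\partial\db R$ is not controlled by $\ii\partial\db\phimod$ pointwise, and $G$ itself is not subharmonic. My plan is to take the modification to be $\epsilon\,G$ with the decoupled scaling built in. That is, use $\hat\varphi=\varphi+G_r$, where $G_r$ is the full anisotropic rescaling $G_r(z)=G(S_{r^{-1}}\ldots)$ at scale $r\to 0$. Under the rescaling $z\mapsto S_t(z)$ with $t=r^{-1}$, the sum $\varphi+G_r$ becomes $t^{-1}\bigl(\phimod+G+O(t^{-d_1})\bigr)$ in $\Cinf$ on the compact support. The error has size $O(r^{d_1})$ and tends to zero, while $\ii\partial\db(\phimod+G)\geq 0$.

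This still fails to give $\geq 0$ exactly wherever $\Delta P_j+\Delta g_j$ vanishes. I would repair this by choosing $g_j$ so that $\Delta P_j+\Delta g_j\geq c\,\Delta P_j$ for some small $c$; this is plausible from the construction in Lemma~\ref{lem_p}, by taking $\epsilon$ small. I would then compare $\ii\partial\db R$ with $\ii\partial\db\varphi\geq 0$ rather than with $\phimod$. Concretely, write $\ii\partial\db\hat\varphi=(1-\tau)\,\ii\partial\db\varphi+\bigl(\tau\,\ii\partial\db\varphi+\ii\partial\db G_r\bigr)$ and absorb the negative part of $\ii\partial\db G_r$ into $\tau\,\ii\partial\db\varphi$. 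This absorption is the delicate step. Outside $\supp G_r$ nothing changes, so $\hat h=h$ on $X\setminus D$.
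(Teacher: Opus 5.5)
You are building the same correction as the paper. The paper takes the $g_j$ from the second part of Lemma~\ref{lem_p}, forms $G_N=\sum_j N^{-2m_j}g_j(Nz_j)$ (your $g_{j,r}$ with $r=N^{-1}$), and then simply asserts that $\ii\partial\db(\vphi+G_N)\ge 0$ on $X$ for $N$ large.

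Two parts of your proposal are sound. Strict positivity at $x_0$ is fine: $\ii\partial\db R(0)=0$, since $z_i\bar z_l$ has weighted degree $\le 1$. Your remark that each $g_j(z_j)$ needs a cut-off in the other variables is also correct, and the paper passes over it.

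The genuine gap is global semipositivity, which is the real content of the lemma. You leave it as ``the delicate step'', and the repair you sketch cannot work. Semipositivity of $\ii\partial\db(\vphi+G)$ forces $\ii\partial\db G(v,\bar v)\ge 0$ for every null vector $v$ of $\ii\partial\db\vphi$, so nothing can be absorbed into $\tau\,\ii\partial\db\vphi$ in those directions. A product correction is negative in the axis direction $\partial_{z_j}$ wherever $\Delta g_j<0$. This must happen somewhere: for $m_j\ge 2$ one has $\Delta g_j(0)>0$ while $\int_{\Cs}\Delta g_j=0$. Meanwhile $R$ can couple the variables, so the null vectors of $\ii\partial\db\vphi$ need not be axis-aligned. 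Imposing $\Delta P_j+\Delta g_j\ge c\,\Delta P_j$ does not address this.

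Here is a concrete counterexample.
\begin{itemize}
\item Let $P(w)=\frac13(\operatorname{Im}w)^4$, so $\Delta P(w)=(\operatorname{Im}w)^2$, and set $\vphi(z)=P(z_1+z_2^2)+P(z_2)$, modulo pluriharmonic terms.
\item This weight is semipositive and decoupled with $m_1=m_2=2$; the remainder $P(z_1+z_2^2)-P(z_1)$ has weighted degree $\ge\frac54$.
\item Take any admissible $g_1$ and a point $\zeta^*$ with $\Delta g_1(\zeta^*)<0$; then $c:=\operatorname{Im}\zeta^*\neq 0$.
\item Take a small real $\delta>0$ and $N$ so large that $N\delta\notin\supp g_2$.
\end{itemize}
At $z=(\zeta^*/N,\delta)$, with $v=(2\delta,-1)$, one has
\[
\ii\partial\db\vphi=\frac{c^2}{N^2}\,\ii(dz_1+2\delta\,dz_2)\wedge\overline{(dz_1+2\delta\,dz_2)},\qquad \ii\partial\db G_N(v,\bar v)=\frac{4\delta^2}{N^2}\,\Delta g_1(\zeta^*)<0 .
\]
The vector $v$ is a null vector of $\ii\partial\db\vphi$. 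Hence $\ii\partial\db(\vphi+G_N)$ is not semipositive for any $N$, so the step the paper asserts is not true for this construction in general.

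Your cut-off $\theta$ changes nothing here: choose $z_2$ real, just outside $N^{-1}\supp g_2$ and inside $\{\theta=1\}$, and the same computation applies.

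The missing idea is a correction adapted to the degenerate directions of $\ii\partial\db\vphi$ itself, not those of $\phimod$. In this example the coordinates $(z_1+z_2^2,z_2)$ would remove the coupling, but your argument has no mechanism for finding such coordinates in general. Even in exactly decoupled coordinates, the cut-off terms $g_j\,\partial\db\theta$ and $\partial g_j\wedge\db\theta$ are not negligible along the zero lines of $\Delta P_i$, $i\neq j$, and they need a genuine argument.
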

\begin{proof}Applying Lemma \ref{lem_new} to $\Delta P_j$, for each $j\in\{1,\cdots,n\}$, we have $g_j\in\Cinf_c(\Cs)$ such that $\Delta(P_j+g_j)\geq 0$ on $\Cs^n$ and $\Delta(P_j+g_j)>0$ at the origin. For any $N\in\N$, denote $G_N:= \sum_{j=1}^{n}N^{-(2m_j)}g_j(N\cdot z_j)$. We may choose a large enough $N\in\N$ such that 
   \[\ii\partial\db (\vphi(z)+G_N)\geq 0 \quad\text{on }X,
   \]and $\ii\partial\db(\vphi(z)+G_N)>0$ at the origin. We can smoothly adjust the metric $h$ in $D$ by taking $|e_L|_{\hat h}^2= e^{-2(\vphi+G_N)}$, which completes the proof. 
\end{proof}
\begin{lemma}\label{lem_de_l}
    Under the assumptions of Theorem \ref{Thm_de}, there exists an open set $D\ni x_0$ such that the localized mild spectral gap hypothesis holds on $D$ for the adjoint setting $L^k\otimes K_X$.  
\end{lemma}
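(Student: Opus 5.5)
Fix the modified metric $\hat h$ of Lemma~\ref{lem_an}. Since $\ii\Theta_{\hat h}(L)>0$ at $x_0$, it is $\geq 2c_0\omega$ on a small coordinate polydisc $D\ni x_0$, and $\hat h=h$ outside the larger chart. The plan is to solve $\db$ for $L^k\otimes K_X$ with the mixed metric $h_k:=\hat h\otimes h^{k-1}$. Then $\ii\Theta_{h_k}(L^k)=\ii\Theta_{\hat h}(L)+(k-1)\ii\Theta_h(L)\geq 2c_0\omega$ on $D$, and it is $\geq0$ everywhere. Moreover $h_k$ and $h^k$ differ by the factor $e^{-2G_N}$ with $G_N$ bounded and independent of $k$, so the norms $\|\cdot\|_{h_k,\omega}$ and $\|\cdot\|_{h^k,\omega}$ are uniformly equivalent. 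As in \cite{hsiao2014asymptotics}, for $u\in\Cinf_c(D,L^k\otimes K_X)$ we have $(\Id-B_k)u\perp\Ker\db$ in $L^2_{h^k,\omega}$. It therefore suffices to produce some solution $v$ of $\db v=\db u$ with $\|v\|_{h^k,\omega}\lesssim\|\db u\|_{h^k,\omega}$, because the minimal solution $(\Id-B_k)u$ has smaller norm. We then conclude with
\[
\|(\Id-B_k)u\|^2\lesssim\|\db u\|^2=(\Box_ku|u)\leq\|\Box_ku\|\,\|(\Id-B_k)u\|,
\]
using $\Box_ku=\Box_k(\Id-B_k)u$. This gives the gap with $N_0=0$.

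To produce $v$, apply Theorem~\ref{Dema} on the complete Kähler manifold $(X,\omega)$ to the $\db$-closed $(n,1)$-form $\eta:=\db u$, viewed with values in $L^k$ with metric $h_k$. Since $\supp\eta\subset D$, where the curvature operator satisfies $A\geq 2c_0$ on $(n,1)$-forms (for the Kähler case, $[\ii\Theta,\Lambda_\omega]$ on $(n,1)$-forms has eigenvalues given by the curvature eigenvalues), we get the pointwise bound $|\eta|^2_{\Theta_{h_k}}\leq (2c_0)^{-1}|\eta|^2_{h_k,\omega}$. Theorem~\ref{Dema} then yields $v$ with $\|v\|^2_{h_k,\omega}\leq (2c_0)^{-1}\|\eta\|^2_{h_k,\omega}$. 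Transferring back via the uniform norm equivalence gives the required estimate with constants independent of $k$.

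The main obstacle is the \emph{construction of $\hat h$} with a curvature lower bound on a whole neighborhood, uniformly in $k$. This is what Lemma~\ref{lem_an} provides, and we only need to shrink $D$ so that the strict positivity at $x_0$ persists by continuity. A secondary check is that $(\Id-B_k)u$ is exactly the $L^2_{h^k}$-minimal solution, which holds since $u\in\Cinf_c$ lies in $\Dom\db$. The equivalence of norms is what allows the mixed metric to be used without affecting orthogonality.
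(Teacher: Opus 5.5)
Your proposal is correct and follows the paper's proof essentially step by step. It uses the mixed metric $h_k=\hat h\otimes h^{\otimes(k-1)}$ built from Lemma~\ref{lem_an}, Demailly's estimate (Theorem~\ref{Dema}) applied to $\eta=\db u$ supported in $D$ where $\ii\Theta_{h_k}(L^k)$ is uniformly positive, the transfer to $\|\cdot\|_{h^k,\omega}$ via the $k$-independent norm equivalence, the minimality of $(\Id-B_k)u$ among $L^2$ solutions, and the Cauchy--Schwarz step $\|\db u\|^2\leq\|\Box_k u\|\,\|(\Id-B_k)u\|$, giving the gap with $N_0=0$; your explicit constant $(2c_0)^{-1}$ from the eigenvalues of $[\ii\Theta,\Lambda_\omega]$ on $(n,1)$-forms just makes concrete the constant $C$ left unspecified in the paper.
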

\begin{proof}
  By Lemma \ref{lem_an}, there exists another semi-positive Hermitian metric $\hat{h}$ such that $\ii\Theta_{\hat h}(L)>0$ at $x_0$, and $\hat h=h$ outside a compact set. By shrinking the open set $D$, we may assume $\ii\Theta_{\hat h}(L)>0$ on $D$. We define $h_k:= \hat h\otimes h^{\otimes (k-1)}$, which is a semipositive Hermitian metric on $L^k$. By Theorem \ref{Dema}  ,  for any $\eta\in \Cinf_c(D, \Lambda^{n,1}T^*X\otimes L^k)$ with $\db \eta=0$, there exists $u\in \Cinf(X,\Lambda^{n,0}T^*X\otimes L^k)$ such that $\db u=\eta$ and \begin{equation}
      \|u\|^2_{h_k,\omega}\leq C \|\eta\|^2_{h_k,\omega},
  \end{equation}for some constant $C>0$. This means that for any $u\in \Cinf_c(D,\Lambda^{n,0}T^*X\otimes L^k)$, there exists $u'\in \Cinf(X,\Lambda^{n,0}T^*X\otimes L^k)$ such that $\db_k u'=\db_k u$ and $
  \|u'\|_{h_k,\omega}\leq C\|\db u\|_{h_k,\omega}$. By compatibility of the norms $\|\cdot\|_{h^k,\omega}$ and $\|\cdot\|_{h_k,\omega}$ , we obtain  $\|u'\|^2_{h^k,\omega}\leq \Tilde C\|\db u\|^2_{h^k,\omega}$,
for some $\Tilde C$. We have \[
\|u-B_ku\|^2_{h^k,\omega}\leq \|u'\|^2_{h^k,\omega}\leq \Tilde C\|\db u\|^2_{h^k,\omega}.
\]Combining this identity with $\|\db_k u\|^2\leq\|\Box_k u\|\cdot\|u-B_ku \|$, we complete the proof.
\end{proof}
\begin{proof}[Proof of Theorem \ref{Thm_de}]
 By Theorem \ref{prop_decouple} and Lemma \ref{lem_de_l}, the assumption in Theorem \ref{Thm_de} implies the model gap hypothesis and localized mild spectral gap hypothesis. Hence, Theorem \ref{Thm_Loc_Off} and Theorem \ref{Thm_Full expansion} give the off-diagonal estimate and full asymptotic expansions. Moreover, Lemma \ref{lemo} tells us that $B_0(0,0)>0$.    
\end{proof}
We end this section with the following observation:

\begin{proposition}\label{d_ample}
   If $X$ is compact and every point is of finite-type and admits a local decoupled model, then $L$ is ample. 
\end{proposition}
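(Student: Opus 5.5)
The plan is to show that $L$ is big and nef, and to get ampleness from bigness together with strict positivity of a suitably modified metric. Nefness is immediate, since $\ii\Theta_h(L)\geq 0$ everywhere. For positivity we use the modification of Lemma \ref{lem_an}: each $x\in X$ has a neighbourhood $D_x$ and a smooth semipositive metric $\hat h_x$ that agrees with $h$ outside $D_x$ and satisfies $\ii\Theta_{\hat h_x}(L)>0$ at $x$, hence on a smaller neighbourhood $W_x$ of $x$.

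By compactness, finitely many of these, $W_{x_1},\dots,W_{x_N}$, cover $X$. We then combine the modifications. Write $\hat h_{x_i}=h e^{-2G_i}$ with $G_i$ compactly supported in $D_{x_i}$. The naive product $h\,e^{-2\sum_i G_i}$ does not work directly, because $\ii\partial\db$ of $\varphi+G_i$ is nonnegative only for each $i$ separately, and adding two corrections could destroy semipositivity where the $G_i$ overlap. The fix is to average: take
\[
\hat h:=h\,e^{-\frac{2}{N}\sum_{i=1}^N G_i}.
\]
Its local weight is $\frac1N\sum_{i}(\varphi+G_i)$, a convex combination of plurisubharmonic functions, so $\hat h$ is semipositive. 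At every point some $W_{x_i}$ contains it, so the $i$-th summand is strictly plurisubharmonic there while the others are plurisubharmonic. Hence $\ii\Theta_{\hat h}(L)>0$ on all of $X$.

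A smooth metric with positive curvature on the compact manifold $X$ makes $L$ ample by the Kodaira embedding theorem, which finishes the proof. The only delicate step is the combination of local modifications; if the averaging argument were in doubt, a fallback is to note that a single $\hat h_x$ already gives $\int_X c_1(L)^n>0$. Then $L$ is big and nef, and one could appeal to Remark \ref{sgp}(3) and the Demailly--Păun type characterisation. The averaging argument avoids all of that.
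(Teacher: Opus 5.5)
Your proof is correct and is essentially the paper's argument: the paper tensors the $N$ modified metrics into $\bigotimes_{i=1}^N \hat h_{x_i}$ on $L^N$, whose curvature $\sum_{i}\ii\Theta_{\hat h_{x_i}}(L)$ is strictly positive, and your averaged metric $\hat h$ is exactly the $N$-th root of that metric on $L$, after which Kodaira's embedding theorem concludes in both cases. The big-and-nef fallback is not needed, and you can drop it; bigness plus nefness alone would not yield ampleness in any case.
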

\begin{proof}
    By compactness, there exists a finite set of smooth Hermitian metrics $\{h_j\}_{j=1}^N$ such that  $\Theta_{h}(L)\geq 0$ on $X$, and for any $x\in X$, there exists $j_0\in\{1,\cdots,N\}$ with $\ii\Theta_{h_{j_0}(L)}>0$ at $x$. Hence, $\otimes_{j=1}^N h_j$ is a Hermitian metric on $L^{N}$ whose Chern curvature form is strictly positive on $X$. This implies that $L$ is ample by Kodiara's  embedding theorem.  
\end{proof}

\section{Branched coverings:  Theorem \ref{Thm_branched1} and Theorem \ref{Thm_branched_uniform}}\label{sec_b}

Let $f:X\to Y$ be a finite holomorphic map between compact complex manifolds
of dimension $n$, and let $(L,h_L)$ be a positive line bundle on $Y$, so
that $\ii\Theta_{h_L}(L)>0$. Fix a ramification point \(x_0\in X\) with
\[
    \operatorname{rank}_{\mathbb C}df_{x_0}=n-1,
    \qquad y_0:=f(x_0).
\]  
Choose holomorphic coordinates \(w=(w_1,w')\) centered at \(y_0\) with
\(\operatorname{Im}df_{x_0}=\{dw_1=0\}\). The differentials of
\(w_2\circ f,\ldots,w_n\circ f\) at \(x_0\) are linearly independent. Hence,
\(z':=(w_2\circ f,\ldots,w_n\circ f)\) completes to a holomorphic coordinate
system \(z=(z_1,z')\) centered at \(x_0\), in which
\begin{equation}\label{fg}
f(z_1,z')=\bigl(g(z_1,z'),z'\bigr). 
\end{equation}
We may assume \(g(0,z')\equiv0\). By the rank hypothesis, \(dg(0)=0\). Since \(f\) is finite, \(g(z_1,0)\not\equiv0\). By the Weierstrass
preparation theorem \(g=U\cdot P\) with \(U(0)\neq0\) and
\[
    P(z_1,z')=z_1^m+\sum_{j=1}^{m-1}a_j(z')z_1^j,
    \qquad a_j(0)=0,
\]
where $m\in\N$ is given in \eqref{m}. 
Let $e_L$ to be a local holomorphic frame of $L$ near $y_0$ with $|e_L|_{h_L}^2=e^{-2\psi}$. Since $\ii\Theta_h(L)>0$ at $y_0$, we may adjust  the frame and apply a
linear change of $\{w_j\}_{j=2}^n$ to get 
\[
    \psi(w)=\sum_{j=1}^n c_j|w_j|^2+O(|w|^3),\qquad c_j>0 .
\]
The pullback frame $f^*e_L$ then has weight $\varphi:=\psi\circ f$, which takes the form
\[
    \varphi(z)=c_1|U(z)|^2|g(z_1,z')|^2+\sum_{j=2}^n c_j|z_j|^2
            +O\bigl(|(g(z_1,z'),z')|^3\bigr).
\]
Set $\Lambda:=\bigl(\tfrac1{2m},\tfrac12,\ldots,\tfrac12\bigr)$. Since
\(a_j(0)=0\) and $U=U(0)+O(|z|)$, every monomial of $a_j(z')z_1^j$ and of
$(U-U(0))P$ has $\Lambda$-weight $>\tfrac12$. Hence,
$g=z_1^m+R_g$ with all monomials of $R_g$ of weight $>\tfrac12$.
Consequently,
\begin{equation}\label{mod}
\phimod(z)=c_1|U(0)|^2 |z_1|^{2m}+\sum_{j=2}^n c_j|z_j|^2,    
\end{equation}    
which is a \(\Lambda\)-homogeneous polynomial.  By Lemma~\ref{prop_decouple}, we know $x_0$ satisfies the quasihomogeneous model gap condition.

To define the leading coefficient intrinsically, we now fix the remaining
freedom in the adapted coordinates. Since
$\ii\Theta_{h_L}(L)_{y_0}>0$, it induces a Hermitian norm
$|\cdot|_{\Theta}$ on $\Lambda^{1,0}T_{y_0}Y$. We choose the coordinates $z$ near $x_0$ and $w$ near $y_0$ such that $w_j\circ f=z_j$ for $j=2,\cdots, n$ as before. Moreover, we normalize $z_1$ by setting 
$\big|\tfrac{\partial}{\partial z_1}\big|_{\omega,x_0}=1$. We require further that  \begin{equation}\label{N2}
     \operatorname{Im}df_{x_0}
        =
        \operatorname{span}
        \left\{
            \frac{\partial}{\partial w_2},
            \ldots,
            \frac{\partial}{\partial w_n}
        \right\}.
\end{equation} We call such $(z,w)$ \emph{normalised adapted coordinates} at $x_0$ and $y_0$. The normal line
    $\Cs\tfrac{\partial}{\partial w_1}$ is orthogonal to
    $\operatorname{Im}df_{x_0}$ with respect to
    $\ii\Theta_{h_L}(L)_{y_0}$. Thus,
    \[
        \ii\Theta_{h_L}(L)_{y_0}
        =
        \ii c_1\,dw_1\wedge d\overline{w}_1
        +
        \ii\sum_{\alpha,\beta=2}^{n}
        c_{\alpha\overline\beta}\,
        dw_\alpha\wedge d\overline{w}_\beta .
    \]
    Geometrically, $c_1$ measures the curvature of $L$ in the complex
normal direction to $\operatorname{Im}df_{x_0}$. More precisely, \[c_1=\left|\frac{\partial}{\partial w_1}\right|_{\Theta}^{2}.\]
Moreover,
\[
    g(z_1,0)
    =
    U(0)z_1^m+O(z_1^{m+1}).
\]
Thus, we can treat $ U(0)\frac{\partial}{\partial w_1}$ as the coordinate expression of the leading normal coefficient of $f$
along the $\omega$-unit vector
$\frac{\partial}{\partial z_1}\in\ker df_{x_0}$. This motivates the
following definition.
\begin{definition}\label{Def_a0}
We set
\[
    a_0(x_0):=\frac{1}{2}\cdot c_1|U(0)|^2=\frac{1}{2}\cdot\left|
        U(0)\frac{\partial}{\partial w_1}
    \right|_{\Theta}^{2}.
\]
\end{definition}
\begin{lemma}\label{Lem_a0_intrinsic}
The number $a_0(x_0)$ does not depend on the choice of normalized adapted
coordinates. It depends only on $f$, on $(L,h_L)$, and on the value of
$\omega$ at $x_0$.
\end{lemma}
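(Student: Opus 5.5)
The plan is to express $a_0(x_0)$ in a manifestly coordinate-free way. Recall that $c_1|U(0)|^2=\bigl|U(0)\tfrac{\partial}{\partial w_1}\bigr|_\Theta^2$. I will show that the vector $U(0)\tfrac{\partial}{\partial w_1}$, taken modulo $\operatorname{Im}df_{x_0}$, is intrinsic up to a unimodular factor. Concretely, let $v\in\ker df_{x_0}$ be an $\omega$-unit vector. For a holomorphic curve $\gamma(t)=x_0+tv+O(t^2)$, consider the $m$-th order jet of $f\circ\gamma$ at $t=0$. Since $df_{x_0}v=0$, the first nonvanishing term of $f\circ\gamma$ in directions transverse to $\operatorname{Im}df_{x_0}$ is of order $m$. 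So the class
\[
\nu(v):=\frac{1}{m!}\frac{d^m}{dt^m}\Big|_{t=0}(f\circ\gamma)\ \bmod\ \operatorname{Im}df_{x_0}\in T^{1,0}_{y_0}Y/\operatorname{Im}df_{x_0}
\]
should be well defined. I will then identify the quotient with the $\Theta$-orthogonal complement $(\operatorname{Im}df_{x_0})^{\perp_\Theta}$ and set $a_0(x_0)=\tfrac12|\nu(v)|_\Theta^2$.

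The key steps, in order, are as follows.

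First, I check well-definedness of $\nu(v)$. It should be independent of the curve $\gamma$ with $\gamma'(0)=v$, and independent of the choice of coordinates on $Y$ once we work modulo $\operatorname{Im}df_{x_0}$. In the adapted coordinates, $f=(g,z')$ with $g=UP$. Along $\gamma$, the $w_1$-component is $g(\gamma(t))$. The $z'$-part of $\gamma$ is $O(t^2)$ with $z'$-linear term absent, because $v\in\ker df_{x_0}=\operatorname{span}\{\partial_{z_1}\}$. Using $a_j(0)=0$ and the fact that monomials of weight $>\tfrac12$ contribute only to order $>m$ in $t$, I expect to get $g(\gamma(t))=U(0)v_1^m t^m+O(t^{m+1})$. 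Here I must verify carefully that the terms $a_j(z')z_1^j$ with $z'=O(t^2)$ are $O(t^{m+1})$. This holds when $2+j>m$. Otherwise a $\Lambda$-weight argument is needed, and the weight count must be redone for the $t$-parametrization. This is the main obstacle: the $\Lambda$-weight bound on $R_g$ does not directly translate into $t$-order when $\gamma$ has quadratic $z'$ components. I would handle this by first showing that the definition is independent of $\gamma$ via the chain rule modulo $\operatorname{Im}df_{x_0}$. The idea is that changing $\gamma$ at order $\ge2$ only changes lower-order data by elements of $\operatorname{Im}df_{x_0}$ at order $m$. Only then would I compute with the straight line $\gamma(t)=tv$, where $g(tv)=U(0)v_1^mt^m+O(t^{m+1})$ is immediate from $g(z_1,0)=U(0)z_1^m+O(z_1^{m+1})$.

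Second, I pass to the norm. Since $\operatorname{Im}df_{x_0}=\operatorname{span}\{\partial_{w_2},\dots,\partial_{w_n}\}$ by \eqref{N2}, and $\partial_{w_1}$ is $\Theta$-orthogonal to it by the choice of normalized adapted coordinates, the orthogonal representative of $\nu(v)$ is $U(0)v_1^m\partial_{w_1}$. Its squared $\Theta$-norm is $c_1|U(0)|^2|v_1|^{2m}$. Because $\ker df_{x_0}$ is one-dimensional, any $\omega$-unit $v$ equals $e^{\ii\theta}\partial_{z_1}$, so $|v_1|=1$. This normalization is exactly the condition $|\partial_{z_1}|_{\omega,x_0}=1$.

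Third, I conclude. The expression $\tfrac12|\nu(v)|^2_\Theta$ uses only $f$, $\ii\Theta_{h_L}(L)_{y_0}$, and $\omega(x_0)$, and it is invariant under $v\mapsto e^{\ii\theta}v$ since the norm picks up $|e^{\ii m\theta}|^2=1$. Hence $a_0(x_0)$ is independent of the choice of normalized adapted coordinates.
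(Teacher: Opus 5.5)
Your first step fails: for an arbitrary holomorphic curve $\gamma$ with $\gamma'(0)=v$, the order-$m$ transverse jet of $f\circ\gamma$ is \emph{not} determined by $v$. So $\nu(v)$ is not well defined, and your proposed fix (``changing $\gamma$ at order $\ge 2$ only changes order-$m$ data by elements of $\operatorname{Im}df_{x_0}$'') is false.

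Concretely, take $n=2$ and $f(z_1,z_2)=(z_1^4+z_1z_2,\,z_2)$. It is finite at $0$, with $\operatorname{rank}df_0=1$, $m=4$, $g(0,z_2)\equiv0$, $U\equiv1$ and $a_1(z_2)=z_2$, so it is in the paper's normal form.
\begin{itemize}
\item For $\gamma(t)=(t,t^2)$, the $w_1$-component of $f\circ\gamma$ is $t^3+t^4$. This has order $3<m$, contradicting your claim that the first transverse term has order $m$.
\item For $\gamma(t)=(t,ct^3)$, the $w_1$-component is $(1+c)t^4$. So the $t^4$-coefficient modulo $\operatorname{Im}df_0$ depends on $c$, and it vanishes for $c=-1$ (that curve lies in $\{g=0\}$).
\end{itemize}
The culprit is exactly the term $a_1(z')z_1$ you were worried about.

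Coordinates on $Y$ cause the same trouble: replacing $w_1$ by $w_1+Q(w')$ with $Q$ quadratic adds $Q(z'\circ\gamma)=O(t^4)$ to the transverse component. Hence computing with ``the straight line $\gamma(t)=tv$'' gives nothing intrinsic. The straight lines in two different adapted systems $z,\tilde z$ are different curves, and showing that they yield the same number is precisely the content of the lemma.

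What is missing is the correct class of test curves: parametrizations of $f^{-1}(\Gamma)$ for a smooth curve $\Gamma\ni y_0$ transverse to $\operatorname{Im}df_{x_0}$. For instance, $\{z'=0\}=f^{-1}(\{w'=0\})$ is such a curve.
\begin{itemize}
\item Write $\Gamma=\{w'=\phi(w_1)\}$. On $f^{-1}(\Gamma)$ one has $|z'|\lesssim|g|\lesssim|z_1|^m+|z_1|\,|z'|+|z'|^2$. Hence $z'=O(z_1^m)$ there.
\item Since $dg(0)=0$, it follows that $g=U(0)z_1^m+O(z_1^{m+1})$ on $f^{-1}(\Gamma)$.
\item Along such curves every component of $f\circ\gamma$ is $O(t^m)$. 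So the $m$-th derivative is a genuine tangent vector, and nonlinear coordinate changes on $Y$ contribute only $O(t^{2m})$.
\item The class of this vector modulo $\operatorname{Im}df_{x_0}$ is $U(0)v_1^m\partial_{w_1}$ for every admissible $\Gamma$.
\end{itemize}
With this restriction your Steps 2 and 3 go through and give a clean intrinsic formula.

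The paper instead argues by a direct transformation law between two normalized adapted systems:
\begin{itemize}
\item $|\alpha|=1$, from the $\omega$-normalization of $\partial_{z_1}$;
\item $\tilde c_1=|\beta|^{-2}c_1$, because $\partial_{w_1}$ and $\partial_{\tilde w_1}$ are both $\Theta$-orthogonal to $\operatorname{Im}df_{x_0}$;
\item $\tilde U(0)=\beta\alpha^{-m}U(0)$, by comparing leading normal terms.
\end{itemize}
That last comparison is valid exactly because $\{\tilde z'=0\}=f^{-1}(\{\tilde w'=0\})$ is a curve of the above type, on which $z'=O(z_1^m)$.
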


\begin{proof}
Let $(\widetilde z,\widetilde w)$ be another system of normalised adapted
coordinates, and let
\[
    \widetilde f(\widetilde z)
    =
    \bigl(\widetilde g(\widetilde z),\widetilde z'\bigr),
    \qquad
    \widetilde g=\widetilde U\widetilde P,
\]
be the corresponding representation.

Both
$\frac{\partial}{\partial z_1}$ and
$\frac{\partial}{\partial\widetilde z_1}$
span $\ker df_{x_0}$. Likewise, both
$\frac{\partial}{\partial w_1}$ and
$\frac{\partial}{\partial\widetilde w_1}$
span the curvature-orthogonal complement of
$\operatorname{Im}df_{x_0}$. Hence,
\[
    \widetilde z_1
    =
    \alpha z_1+T(z')+O(|z|^2),
    \qquad
    \widetilde w_1
    =
    \beta w_1+O(|w|^2),
\]
for some $\alpha,\beta\neq0$ and some linear form $T$. The normalization
of the source coordinates gives $|\alpha|=1$. Comparing the leading normal terms in the two coordinate systems gives
\[
    \widetilde U(0)
    =
    \beta\alpha^{-m}U(0).
\]
On the other hand, the curvature coefficient transforms as $\widetilde c_1=|\beta|^{-2}c_1$.
Therefore,
\[\widetilde c_1|\widetilde U(0)|^2
    =|\beta|^{-2}c_1
    \cdot
    |\beta|^2|\alpha|^{-2m}|U(0)|^2 
 =c_1|U(0)|^2.
\]
\end{proof}
\begin{proof}[Proof of Theorem \ref{Thm_branched1}] 
Since $f$ is finite and surjective, the pull-back $f^*L$ is ample.
Hence, the localized mild spectral holds by Remark \ref{sgp}. 

We next take the model coordinate $z=(z.z')$, where $z'=(z_2,\cdots,z_n)$ satisfying 
\[ \omega(\dfrac{\partial}{\partial z_i},\dfrac{\partial}{\partial \bar z_j})=\delta_{i,j}.\] 
By \eqref{mod}, the model polynomial can be taken as the form \begin{equation}
\phimod=\frac{1}{2}a_0(x_0)|z_1|^{2m}+\sum_{j=2}^n\frac{1}{2}c_j|z_j|^2,
\end{equation} where $a_0(x_0)>0$, $c_j>0$. By Proposition \ref{prop_goodmodel}, the quasi-homogeneous condition holds. Hence, we can apply Theorem \ref{Thm_Loc_Off} and Theorem \ref{Thm_Full expansion} to get the full expansions and off-diagonal estimate. It remains to compute the leading coefficient $B_{\Lambda}(0,0)$. 

 We first compute that \begin{equation}
\int e^{-2\phimod}dV=\left(\int_{\mathbb C}
e^{-a_0(x_0)\cdot |z_1|^{2m}}\,dA(z_1)
\right)
\prod_{j=2}^n
{\left(
\int_{\mathbb C}
e^{-{c_j}|z_j|^2}\,dA(z_j)
\right)},
\end{equation}where $dA$ is standard Lebesgue measure on $\Cs$. By the Gaussian integral, one has  $\int
e^{-2c|z|^2}dV=({c_j})^{-1}\pi$. For the degenerate direction, 
\begin{equation*}
    \int_{\mathbb{C}} e^{-a_0|z_1|^{2m}}\,dA(z_1)
  =2\pi\int_0^{\infty} e^{-a_0 r^{2m}}\,r\,dr
  =\pi\,(a_0)^{-1/m}\,\Gamma\!\left(1+\frac{1}{m}\right).
\end{equation*}Hence, we have 
\begin{equation}
    \int e^{-2\phimod} \frac{\omega_0^n}{n!}=2^{n}\int e^{-2\phimod}dV=(2\pi)^{n}\Gamma(1+\frac{1}{m})(a_0)^{-1/m}\left(\prod_{j=2}^n({c_j})^{-1}\right).
\end{equation}
Since $\phimod$ is torus invariant in the sense that $\phimod(e^{\ii\theta_j}z_1,\cdots,e^{\ii\theta}z_n)=\phimod(z_1,\cdots,z_n)$ where $\theta_j\in\R$, the Bergman function at the origin is the inverse the $L^2$-length of the trivial function $\mathbf{1}:\Cs^n\to\{1\}$, which is given by \begin{equation}
    \Bmod(0,0)= \left(\int e^{-2\phimod} \frac{\omega_0^n}{n!}\right)^{-1}=\dfrac{a_0^{1/m}\left(\prod_{j=2}^n{c_j}\right) }{(2\pi)^n\Gamma(1+\frac{1}{m})}.
\end{equation} 
It is straightforward to compute that\[\prod_{j=2}^n{c_j}=
\det_{\omega}\Big(\ii\Theta_{f^*h_L}(f^*L)_{x_0}\big|_{(\ker df_{x_0})^{\perp_\omega}}\Big),
\]which implies \eqref{PHI}.
 \end{proof}

\begin{proof}[Proof of Theorem \ref{Thm_branched_uniform}]

After shrinking the neighborhoods, $f|_M$ is an immersion,
$\ker df_p$ is transverse to $T_pM$, and $f(M)$ is a smooth
hypersurface. Choose holomorphic coordinates $w=(w_1,w')$ near $y_0$
such that $f(M)=\{w_1=0\}$. Let $z_1$ be a defining function of $M$ and
set
\[
    z_j:=w_j\circ f,\qquad 2\leq j\leq n.
\]
Then $z=(z_1,z')$ is a coordinate system near $x_0$, and
\[
    M=\{z_1=0\},\qquad
    f(z_1,z')=\bigl(g(z_1,z'),z'\bigr),\qquad
    g(0,z')=0.
\]
Moreover, $\det df=\frac{\partial g}{\partial z_1}$. Since the ramification divisor is supported on the connected smooth
hypersurface $M$, after shrinking again, there is a nowhere vanishing holomorphic function $u$ near $M$, such that $\frac{\partial g}{\partial z_1}
=z_1^{m-1}u(z_1,z')$ . Using $g(0,z')=0$, we obtain
\[g(z_1,z')=\int_0^{z_1}t^{m-1}u(t,z')\,dt =
    z_1^mv(z_1,z'),
\]where 
$v(z_1,z')
:=
\int_0^1s^{m-1}u(sz_1,z')ds$. In particular, $v(0,z')=\frac{u(0,z')}{m}\neq0$. After shrinking the polydisc, $v$ is nowhere vanishing and admits a
holomorphic $m$-th root. Replacing $z_1$ by $z_1v^{1/m}$, we obtain
\begin{equation*}
    M=\{z_1=0\},
    \qquad
    f(z_1,z')=(z_1^m,z').
\end{equation*}
Define the normal rescaling
$S_k^\perp(\zeta_1,z')
    =
    \bigl(k^{-\frac{1}{2m}}\zeta_1,z'\bigr)$ along $M$. In these coordinates, the coefficients of the
rescaled weight, Hermitian metric, and volume form depend smoothly on
$z'$. Their asymptotic expansions, together with all their derivatives,
are uniform when $z'$ ranges over a compact subset of $D'$.

The corresponding family of quasi-homogeneous models also depends
smoothly on $z'$. In view of  Proposition~\ref{prop_goodmodel}, its model gap is
uniform on compact subsets of $D'$. Also, the localized mild spectral gap is
the same as in the proof of Theorem~\ref{Thm_branched1}.

We may therefore apply Theorem~\ref{Thm_Loc_Off} and Theorem~ \ref{Thm_Full expansion} with $z'$ as a parameter. The constants in
the theorems depend continuously on the above local data, and hence can
be chosen uniformly for $z'$ in a compact set in $M$. This gives the smooth
kernels $\Phi_j$ in the stated uniform expansion, and the uniform
off-diagonal estimate.

Finally, for each $p=(0,z')\in M$, the value
$\Phi_0(z';0,0)$ is the leading coefficient in
Theorem~\ref{Thm_branched1}. It is given by \eqref{PHI}.
\end{proof}
\begin{remark}\label{rem:higher-corank}
    From the proof, we can see that the localization and off-diagonal decay estimate (see Theorem \ref{Thm_Loc_Off}) still apply if $\rank df<n-1$ but with a more subtle rank condition on the \emph{strata}. For example, if $f=(z_1^{m_1},\cdots, z_n^{m_n})$ for some $m_j\in\N$, then the model polynomial takes the form $\sum_{j=1}^nc_j|z_j|^{2m_j}$ where $c_j>0$. Then, we can establish off-diagonal decay and full expansion near the \emph{good strata}.
\end{remark}

\section{Special Non-Quasi-homogeneous case}\label{sec_non}
In this section, we recall Kamimoto's work (see \cite{kamimoto2004newton}) and reformulate one of his results in a form suited to our setting. Kamimoto studied the boundary singularities of the Bergman kernel for a class of finite-type pseudoconvex model domains:
\begin{equation}\label{eq:Omega_F}
     \Omega_F
    :=
    \bigl\{(z_0,z)\in\mathbb C\times\mathbb C^n;
    \ \operatorname{Im} z_0>F(z)\bigr\},
\end{equation}
where \(F\) is a smooth plurisubharmonic function invariant under coordinatewise rotations. Under the finite-type and growth assumptions stated later, he used Newton polyhedra and toric resolutions to obtain a full asymptotic expansion of the Bergman kernel as the point approaches the boundary nontangentially. In particular, the model polynomial of $F$ need not be quasihomogeneous. Its Newton diagram may have several compact faces.

A key ingredient in Kamimoto's argument is an integral representation due to Haslinger; see \cite{haslinger1998bergman}. For \(\tau>0\), let
\[
    \mathcal H_\tau
    :=
    \left\{
        f\in\mathcal O(\mathbb C^n);
        \ \int_{\mathbb C^n}|f(z)|^2e^{-2\tau F(z)}\,dV(z)<\infty
    \right\},
\]
and let \(B_{\tau,F}(z,w)\) be its reproducing kernel. Haslinger's formula gives
\[
 B_{\Omega_F}\bigl((z_0,z),(z_0,z)\bigr)
 =
 \frac{1}{2\pi}
 \int_0^\infty
 e^{-2\tau\operatorname{Im}z_0}
 B_{\tau,F}(z,z)\,\tau\,d\tau.
\]
Thus, the boundary asymptotics of the Bergman function of \(\Omega_F\) are related to the large-\(\tau\) asymptotics of \(B_{\tau,F}(z,z)\). Although Kamimoto's main results are formulated for the boundary Bergman kernel, we only need the part concerning the asymptotics of the associated weighted Fock kernel. We now reformulate this part for our purpose.

Let $F$ be a smooth plurisubharmonic function on $\Cs^n$ with $F(0)=\nabla F=0$ such that \begin{enumerate}
    \item $0\in\partial\Omega_F$ (see \eqref{eq:Omega_F}) is of finite-type in the sense of D'Angelo.
    \item $F(e^{\ii\theta_1}z_1,\cdots,e^{\ii\theta_n}z_n)=F(z_1,\cdots,z_n)$ for any $\theta_j\in\R$.
    \item There are some small positive numbers $c$ and $\epsilon'$ such that $F(z)\geq c|z|^{\epsilon'}$ for large enough $|z|$.
\end{enumerate}
\begin{definition}[Newton polyhedron, distance and multiplicity]\label{def:newton}
Let $F$ be a real-valued $C^{\infty}$ function defined near the origin in
$\mathbb{C}^{n}$ with $F(0)=0$, and let
\[
  F\sim\sum_{\alpha,\beta\in\mathbb{Z}_{+}^{n}}
  C_{\alpha\beta}\,z^{\alpha}\bar{z}^{\beta}
\]
be its Taylor series at the origin. The \emph{support} of $F$ is defined by
\[
  S_{F}=\{\alpha+\beta\in\mathbb{Z}_{+}^{n}:C_{\alpha\beta}\neq0\},
\]
and the \emph{Newton polyhedron} $\widetilde{\Gamma}_{+}(F)$ of $F$ is the convex
hull of $\bigcup_{\gamma\in S_{F}}\bigl(\gamma+\mathbb{R}_{+}^{n}\bigr)$ in
$\mathbb{R}_{+}^{n}$. The \emph{Newton diagram} $\widetilde{\Gamma}(F)$ is the
union of the compact faces of $\widetilde{\Gamma}_{+}(F)$, and the
\emph{Newton principal part} of $F$ is
\[
  F_{0}(z)=\sum_{\alpha+\beta\in\widetilde{\Gamma}(F)}
  C_{\alpha\beta}\,z^{\alpha}\bar{z}^{\beta}.
\]
Suppose now that the diagonal ray $\{(d,\dots,d):d>0\}$ meets
$\widetilde{\Gamma}(F)$, and write the intersection point as
$Q_{0}=(d_{F},\dots,d_{F})$. The number $d_{F}>0$ is called the \emph{distance}
of $\widetilde{\Gamma}(F)$. Letting $\hat{m}_{F}$ denote the number of
$(n-1)$-dimensional faces of $\widetilde{\Gamma}(F)$ containing $Q_{0}$, the
\emph{multiplicity} of $\widetilde{\Gamma}(F)$ is
\begin{equation}\label{mF}
  m_{F}=\min\{\hat{m}_{F},n\}.
\end{equation}
\end{definition} In particular $1\le m_{F}\le n$. These notions
are due to \cite[\S 2.2]{kamimoto2004newton}, where it is also shown that a suitable finite-type
condition at $0$ guarantees that $\widetilde{\Gamma}(F)$ meets every coordinate
axis, so that $Q_{0}$, $d_{F}$ and $m_{F}$ are well defined.
\begin{theorem}[Kamimoto {\cite[Theorem~3.6]{kamimoto2004newton}}]

There exist $\epsilon>0$ and an integer $q_F\geq 1$ such that
\begin{equation}
\label{eq:Kamimoto-expansion}
    B_{\tau,F}(z,z)
    \sim
    \frac{\tau^{2/d_F}}
    {(\log\tau)^{m_F-1}}
    \sum_{j=0}^{\infty}
    \sum_{\ell=(m_F-n)j}^{\infty}
    b_{j,\ell}(z)\,
    \tau^{-j/q_F}
    (\log\tau)^{-\ell},
    \qquad \tau\to\infty,
\end{equation}
for $z\in B(\epsilon)$. The coefficients $b_{j,\ell}(z)$ are
polynomials in $|z_1|^2,\ldots,|z_n|^2$, and $b_{0,0}$ is a positive constant depending only on the Newton
principal part $F_0$ of $F$.
\end{theorem}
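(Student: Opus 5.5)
Since this statement is quoted from \cite[Theorem~3.6]{kamimoto2004newton}, we only sketch how we would reprove it. We would work directly with the weighted Fock space $\mathcal H_\tau$ and exploit assumption (2). Torus invariance of $F$ makes the monomials $z^\alpha$ pairwise orthogonal in $L^2(e^{-2\tau F}dV)$, and assumption (3) ensures that every polynomial lies in $\mathcal H_\tau$. Hence
\[
B_{\tau,F}(z,z)=\sum_{\alpha\in\N_0^n}\frac{|z^\alpha|^2}{c_\alpha(\tau)},\qquad c_\alpha(\tau):=\int_{\Cs^n}|z^{\alpha}|^2e^{-2\tau F(z)}\,dV(z).
\]
This reduces the problem to the large-$\tau$ asymptotics of the one-parameter family of Laplace-type integrals $c_\alpha(\tau)$, followed by control of the series in $\alpha$.

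\textbf{Step 1: moment integrals.} We would split each $c_\alpha$ into a piece over a small ball and a piece over its complement. Since $F$ is psh, vanishes only at $0$ to finite order by (1), and grows by (3), the outer piece is $O(e^{-c\tau})$. For the inner piece we pass to polar coordinates $r_j=|z_j|$, $t_j=r_j^2$, which make $F$ a smooth function of $(t_1,\dots,t_n)$ with the same Newton polyhedron. We then write $c_\alpha$ as the Mellin–Laplace transform of the local zeta function
\[
Z_\alpha(s)=\int_{\text{near }0}F(t)^{s}\,t^{\alpha}\,dt,
\]
and use a toric resolution subordinate to the Newton polyhedron $\widetilde\Gamma_+(F)$, in the manner of Varchenko. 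In each toric chart $F$ becomes a monomial times a unit, thanks to the finite-type hypothesis and the nondegeneracy of the Newton principal part $F_0$. This yields a meromorphic continuation of $Z_\alpha$ whose poles lie on finitely many arithmetic progressions with a common denominator $q_F$. For $\alpha=0$ the leading pole is governed by the distance $d_F$ and has order $m_F$, the number of facets through $Q_0$ capped at $n$. Moving the contour then gives
\[
c_0(\tau)\sim\tau^{-2/d_F}(\log\tau)^{m_F-1}\sum_{j,\ell} a_{j,\ell}\,\tau^{-j/q_F}(\log\tau)^{-\ell},\qquad a_{0,0}>0,
\]
with $a_{0,0}$ depending only on $F_0$. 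For $\alpha\neq0$ the weight $t^\alpha$ shifts the poles strictly to the left by amounts controlled by the Newton polyhedron, so $c_\alpha(\tau)/c_0(\tau)$ carries a negative power $\tau^{-\kappa(\alpha)}$ with $\kappa(\alpha)\to\infty$ as $|\alpha|\to\infty$.

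\textbf{Step 2: inverting and summing.} Inverting the expansion of $c_0$ produces the prefactor $\tau^{2/d_F}(\log\tau)^{1-m_F}$ and a double series in $\tau^{-j/q_F}$ and $(\log\tau)^{-\ell}$. The lower bound $\ell\geq (m_F-n)j$ reflects that lower-order poles can have order up to $n$, which exceeds $m_F$. The quotients $1/c_\alpha(\tau)$ are treated the same way. Collecting the terms $|z^\alpha|^2/c_\alpha(\tau)$ of a fixed order $\tau^{-j/q_F}(\log\tau)^{-\ell}$ gives coefficients $b_{j,\ell}(z)$ that are polynomials in $|z_1|^2,\dots,|z_n|^2$, since only finitely many $\alpha$ contribute to each order.

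\textbf{Main obstacle.} The hardest point is the uniformity of the sum over $\alpha$: the remainder after truncating at order $j$ must be summable for $z\in B(\epsilon)$. We would first try to obtain explicit lower bounds $c_\alpha(\tau)\geq C^{-|\alpha|}\,\tau^{-\mu(\alpha)}$ by restricting the integral to a toric box adapted to the vertices of $\widetilde\Gamma(F)$. We would then combine these with a uniform bound on the remainder in the contour shift of Step 1, which requires tracking the $\alpha$-dependence of the residues. Choosing $\epsilon$ small then makes the tail $\sum_{|\alpha|>A}|z^{\alpha}|^2/c_\alpha(\tau)$ negligible at every fixed order. The identification of the top pole order with $m_F$ via the combinatorics of the fan is the other delicate point; there we would follow Kamimoto's face-by-face analysis.
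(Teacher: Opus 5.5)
The paper gives no proof of this statement. It quotes Kamimoto and uses the result only at $z=0$, in the proof of Theorem~\ref{Thm_non}. At $z=0$ your approach works: $B_{\tau,F}(0,0)=1/c_0(\tau)$, and your Step~1 followed by inverting the expansion of $c_0$ is what is needed. Step~1 here means the Laplace/Mellin analysis of $c_0$ through a toric resolution adapted to the Newton polyhedron, as in Varchenko and Kamimoto. One thing is missing there. Nondegeneracy of $F_0$ is not among hypotheses (1)--(3). What the orthant computation actually requires is that each compact-face polynomial, written in $t_j=|z_j|^2$, is positive on $(0,\infty)^n$. You must derive this, for example from plurisubharmonicity and rotation invariance.

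The genuine gap is in Step~2, and it cannot be repaired, because you have the direction of the $\alpha$-dependence reversed. You correctly note that $c_\alpha(\tau)/c_0(\tau)=O(\tau^{-\kappa(\alpha)})$, with $\kappa(\alpha)>0$ for $\alpha\neq0$ and $\kappa(\alpha)\to\infty$. But this makes $|z^\alpha|^2/c_\alpha(\tau)$ \emph{larger} than the leading term $1/c_0(\tau)\approx\tau^{2/d_F}(\log\tau)^{1-m_F}$, by a factor of size $|z^\alpha|^2\tau^{\kappa(\alpha)}$. So at any fixed $z\neq0$, infinitely many terms exceed every order of the claimed expansion. Your claim that only finitely many $\alpha$ contribute to each order is false, and shrinking $\epsilon$ does not make the tail negligible.

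In fact the displayed expansion is itself false at fixed $z\neq0$. Take $n=1$ and $F=|z|^2$, which satisfies (1)--(3). Then $c_a(\tau)=\pi\,a!\,(2\tau)^{-a-1}$, hence
\[
B_{\tau,F}(z,z)=\frac{2\tau}{\pi}\,e^{2\tau|z|^2}.
\]
Here $d_F=2$ and $m_F=1$, so the statement predicts $B_{\tau,F}(z,z)=b_{0,0}(z)\,\tau+o(\tau)$, which fails for every $z\neq0$.

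So your argument can only establish the case $z=0$, which is all the paper needs. A $z$-dependent version with polynomial coefficients makes sense only if $z\to0$ with $\tau$ fast enough that $|z^\alpha|^2\tau^{\kappa(\alpha)}\to0$. This is the analogue of the nontangential approach in Kamimoto's boundary expansion for $\Omega_F$. In that regime the sum over $\alpha$ would have to be graded by these products, not by powers of $\tau$ alone.
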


\begin{remark}
The integer $q_F$ is an admissible common denominator for the
powers of $\tau$ occurring in Kamimoto's expansion. It may be chosen
using a regular toric subdivision associated with the Newton
polyhedron of $F$.

The expansion \eqref{eq:Kamimoto-expansion} is understood in the
iterated asymptotic sense of
\cite[\S~2.6]{kamimoto2004newton}. More precisely, the outer
expansion is taken in powers of $\tau^{-1/q_F}$, while, at each
fixed outer order, the corresponding coefficient admits a
Laurent-type asymptotic expansion in $(\log\tau)^{-1}$, beginning
at the power
\[
    (\log\tau)^{-(m_F-n)j}.
\]
Thus, \eqref{eq:Kamimoto-expansion} should not be interpreted as an
ordinary convergent double series, and the two summations cannot in
general be rearranged.
\end{remark}

We now return to our setting. For the weight functions $\varphi$ near $z_0$, which might not have a quasi-homogeneous leading term, we can also define the $\Lambda$-model polynomial by the truncation \begin{equation}
    \phimod:=\sum_{|(\alpha,\beta)|_\Lambda\leq 1}a_{\alpha,\beta}z^\alpha\bar z^\beta.
\end{equation}
From now on, we assume that the model polynomial is of the form: \begin{equation}\label{good_model}
\phimod:=\sum_{\substack{\alpha\in\N^n_0\\2|\alpha|_{\Lambda}\leq 1}}a_{\alpha,\beta}|z^\alpha|^2,\quad \text{with }a_{\alpha}\geq 0.
\end{equation}
\begin{proposition}\label{prop_goodmodel}
   If $\phimod$ is of the form \eqref{good_model} and $\phimod>0$ for all $z\neq 0$, then $x_0$ satisfies the model gap condition with weight vector $\Lambda$. Namely, for sufficiently large $k$,  $\Spec \Boxmodk\cap(0,c)=\varnothing$ for $c>0$ independent of $k$. Also, we have \[
   \Spec\Box_{k\phimod,\omega_0}\cap(0,c)=\varnothing.
   \]
\end{proposition}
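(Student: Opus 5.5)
Following the strategy of Theorem~\ref{prop_decouple}, we will perturb the weight by a bounded smooth function to obtain a uniformly strictly plurisubharmonic weight, and then apply H\"ormander's $L^2$-estimate. The model $\phimodk(z)=\sum_{2|\alpha|_\Lambda\le 1}k^{1-2|\alpha|_\Lambda}a_\alpha|z^\alpha|^2$ is a nonnegative combination of the plurisubharmonic functions $|z^\alpha|^2$, so $\ii\partial\db\phimodk\ge 0$. Also $k\mapsto\phimodk$ is nondecreasing termwise, with $\phimodk\ge\phimod$ for $k\ge1$ and $\ii\partial\db\phimodk\ge\ii\partial\db\phimod$. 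Both assertions, for $\Boxmodk$ and for $\Box_{k\phimod,\omega_0}$, are therefore gap statements for weights $\psi$ satisfying $\ii\partial\db\psi\ge\ii\partial\db\phimod$ (note $k\phimod\ge\phimod$ termwise too). We first replace $\omega_0$ by the Euclidean form: on $(0,0)$- and $(0,1)$-forms the norms are uniformly comparable, and the gap is equivalent to $\|u\|\lesssim\|\db u\|$ for $u\perp\Ker\db$. This quantity is stable under comparable norms, by the same argument as in Theorem~\ref{prop_decouple}, which solves $\db u=\eta$ with a norm bound and takes the minimal solution.

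The key step is to construct a bounded $F\in\Cinf(\Cs^n)\cap L^\infty$ with $\ii\partial\db(\phimod+F)\ge c\,\omega_e$ on $\Cs^n$. Given such an $F$, the same $F$ works for every $\psi=\phimodk$ or $k\phimod$, since $\ii\partial\db(\psi+F)\ge\ii\partial\db(\phimod+F)\ge c$. H\"ormander then gives, for every $\db$-closed $\eta$, a solution $u$ of $\db u=\eta$ with $\|u\|_{\psi}\le e^{2\sup|F|}c^{-1/2}\|\eta\|_\psi$, where the constant is independent of $k$. Hence $\Spec\cap(0,c')=\varnothing$ uniformly in $k$, which is exactly the claim.

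To build $F$, we use quasi-homogeneity and torus invariance. The condition $\phimod>0$ for $z\ne0$ forces, for each $j$, a pure monomial $a|z_j|^{2m_j}$ with $2m_j\lambda_j=1$ (otherwise $\phimod$ would vanish on a coordinate axis). By $\Lambda$-homogeneity, on the anisotropic sphere $|z|_\Lambda=1$ the form $\ii\partial\db\phimod$ is positive in the $z_j$-direction wherever $z_j\neq0$; more precisely, $\partial_j\bar\partial_j\phimod\ge c\,|z_j|^{2m_j-2}$. So $\ii\partial\db\phimod$ degenerates only near the coordinate hyperplanes $\{z_j=0\}$, and only in the $z_j$-direction. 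Following Lemma~\ref{lem_p}, we then add $\epsilon\sum_j\rho(|z_j|^2)|z_j|^2$, which is bounded with Hessian $\ge\epsilon$ in the $z_j$-direction on the strip $|z_j|^2\le\tfrac12$. Its negative contribution on $\tfrac12\le|z_j|^2\le1$ is absorbed by $\partial_j\bar\partial_j\phimod\ge c|z_j|^{2m_j-2}\ge c2^{1-m_j}$, using the diagonal torus structure: $\ii\partial\db|z^\alpha|^2$ is a rank-one positive form that dominates its diagonal part only in a weak sense. Off-diagonal terms must be controlled by positivity of $\ii\partial\db\phimod$ itself, using Cauchy--Schwarz together with $\partial_j\bar\partial_j\phimod$ in the relevant directions. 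Finally, as in Lemma~\ref{lem_p}, we cure a compact region with $C\rho(|z|^2/R)\log(1+|z|^2)$.

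\textbf{Main obstacle.} The hardest part is the uniform lower bound $\ii\partial\db\phimod\ge c\,\mathrm{diag}(|z_j|^{2m_j-2})$ outside compacts. This requires showing that the mixed monomials $|z^\alpha|^2$ do not destroy positivity in directions transverse to the hyperplanes. I would first try writing $\ii\partial\db|z^\alpha|^2=|z^\alpha|^2\,\ii\partial\log|z^\alpha|^2\wedge\db\log|z^\alpha|^2$, which is positive, and then keeping only the pure terms for the lower bound. Since all terms are positive, $\ii\partial\db\phimod\ge\sum_j a_j m_j^2|z_j|^{2m_j-2}\,\ii dz_j\wedge d\bar z_j$ directly, so this step may in fact be immediate.
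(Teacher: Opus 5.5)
Your argument is correct and is essentially the paper's proof. The condition $\phimod>0$ off the origin supplies pure terms $\tilde a_j|z_j|^{2m_j}$, and a bounded correction $F$ makes their sum uniformly strictly plurisubharmonic. Every other contribution to $\ii\partial\db\phimodk$ or $\ii\partial\db(k\phimod)$ is a nonnegative multiple of some $\ii\partial\db|z^\alpha|^2\ge 0$, with $k$-powers $\ge 1$. H\"ormander's estimate with comparable weights then gives a gap independent of $k$.

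Your ``main obstacle'' is indeed immediate by discarding these plurisubharmonic mixed terms, which is exactly what the paper does. You should therefore drop the appeal to $\Lambda$-homogeneity and the claim $2m_j\lambda_j=1$. The truncated model in \eqref{good_model} need not be quasi-homogeneous, and only $2m_j\lambda_j\le 1$ is guaranteed; neither fact is needed for the argument.
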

\begin{proof}
   By the assumption $\phimod>0$ for all $z\neq 0$, we know that the model polynomial must include the \emph{diagonal terms} in the sense that \[\phimod=\underbrace{\sum_{j=1}^n\widetilde{a}_{j}|z_j|^{2m_j}}_{\text{diagonal terms}}+\sum_{\substack{\alpha\in\N_0^n\\ 2|\alpha|_{\Lambda}\leq 1}}a'_{\alpha}|z^\alpha|^2,
   \]where $a'_{\alpha}\geq 0$, $\widetilde{a}_j>0$ and $m_j\in\N$ with $2m_j\leq\lambda_j^{-1}$. By using Lemma \ref{lem_p}, we can construct bounded smooth function $F:\Cs^n\to \R$ such that $\phi:=\sum_{j=1}^n\widetilde{a}_{j}|z_j|^{2m_j}+F$ 
   is strictly pluri-subharmonic on $\Cs^n$ in the sense that $\ii\partial\db \phi\geq c>0$, where $c$ is the uniform lower bound of the Hermitian $(1,1)$-form. Since every  term of $\phimodk$ is of the form $k^{1-2|\alpha|_\Lambda}a_{\alpha}|z^\alpha|^2$, which is plurisubharmonic, we know that \begin{equation}\label{adjust}
       \phi_k(z):=\phimodk(z)+F
   \end{equation} is also strictly pluri-subharmonic with $\ii\partial\db \phi_k\geq c>0$. Since $F$ is bounded, the weighted $L^2$-norm $\|\cdot\|_{\phi_k}$ and $\|\cdot\|_{\phimodk}$ are compatible. By standard H\"{o}rmander's $L^2$-estimate, for any $(0,1)$-form  $v\in L^2_{\varphi^\Lambda_{k,\epsilon},\omega_e}(\Cs^n,\Lambda^{0,1}T^*\Cs^n)$ with $\db v=0$, there exists $u\in L^2_{\varphi^\Lambda_{k,\epsilon}}(\Cs^n)$ such that \begin{equation}
       \|u\|_{\phimodk}\lesssim \|u\|_{\phi_k}\leq c^{-1}\|v\|_{\phi_k}\lesssim \|v\|_{\phimodk}.
   \end{equation}Hence, for $u\in L^2_{\phimodk}(\Cs^n)$ with  $u\perp \Ker\Boxmodk$, we have $\|u\|^2_{\phimodk}\leq c'\|\db u\|^2_{\phimodk}=c'(\Boxmodk  u|u)_{\phimodk}$ for some $c'$ independent of $u$. This implies the desired model gap condition. The second statement follows by a similar argument, replacing $\phimodk$ with $k\phimod$.
\end{proof}

\begin{theorem}[Newton-type leading asymptotics]
\label{Thm_non}
Assume that $\Box_k$ has a localized mild spectral gap on a
neighborhood $D$ of $x_0$, and that, in suitable local coordinates
and a local frame, the weight of $h$ at $x_0$ admits a model of the
form \eqref{good_model} with weight $\Lambda$. Suppose moreover that
\[
    \phimod(z)>0,
    \qquad z\neq 0.
\]
Then there exists a sequence $b_k$ admitting the asymptotic expansion
\[
    b_k
    \sim
    \sum_{\ell=0}^{\infty}
    b_{0,\ell}(\log k)^{-\ell}
\]
such that
\[
\begin{split}
    B_k(x_0,x_0)
    &=
    \frac{k^{2/d_{\phimod}}}
         {(\log k)^{m_{\phimod}-1}}
    \bigg[
        b_k
        +
        O\left(k^{-d_1}\right)
        +
        O\left(
            k^{-1/q_{\phimod}}
            (\log k)^{n-m_{\phimod}}
        \right)
    \bigg].
\end{split}
\]
In particular,
\[
    B_k(x_0,x_0)
    \sim
    b_{0,0}
    \frac{k^{2/d_{\phimod}}}
         {(\log k)^{m_{\phimod}-1}},
\]
where $b_{0,0}>0$ depends only on the Newton principal part of
$\phimod$.
\end{theorem}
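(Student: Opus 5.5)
The plan is to reduce $B_k(x_0,x_0)$ to a weighted Fock kernel on $\Cs^n$ with weight $k\phimod$, and then read off its large-parameter behaviour from Kamimoto's expansion \eqref{eq:Kamimoto-expansion} with $\tau=k$.

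\emph{Step 1: localization.} By Proposition~\ref{prop_goodmodel} the model gap condition holds with weight vector $\Lambda$, and the localized mild spectral gap is assumed. Theorem~\ref{Thm_Loc_Off}(a) therefore gives
\[
B_k(x_0,x_0)=k^{2|\Lambda|}\tBk(0,0)+O(k^{-\infty}).
\]
Here $\tBk$ is the Bergman kernel of $L^2_{\tphik,\tomegak}(\Cs^n)$.

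\emph{Step 2: compare $\tBk$ with the truncated model.} Let $\Bmodk$ be the Bergman kernel for the weight $\phimodk$ and the form $\omega_0$. I would show
\[
\tBk(0,0)=\Bmodk(0,0)\bigl(1+O(k^{-d_1})\bigr).
\]
I would rerun the perturbation scheme of \S\ref{sec_6}. Define $\hBk=e^{-\dphik}\sBmodk e^{\dphik}$ and $\Qk=\hBk-\hBk^*$, so that \eqref{eq:exp_Q} holds. By \eqref{eq:conn}, the factors $e^{\pm\dphik}$ and $\sqrt{(\tomegak)^n/(\omega_0)^n}$ equal $1+O(k^{-d_1})$ on $|z|\le k^{\epsilon_0}$ and are exactly $1$ outside. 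Using the off-diagonal bound of Theorem~\ref{Thm_off_B_mod}, whose polynomial loss is in $(1+|z|+|w|+k)$, one gets that $\Qk$ has kernel of size $O(k^{-d_1})$ times an $\F$-type kernel, up to polynomial losses in $k$. Truncating the Neumann series at order $N$ then yields the first-order comparison.

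The \emph{main obstacle} is exactly these losses. Because $\phimodk$ is not quasi-homogeneous, the constants in Theorem~\ref{Thm1} grow polynomially in $k$, and a crude bound could swamp the gain $k^{-d_1}$. I would first try to avoid this by conjugating with the unitary rescaling $S_k^\Lambda$ back to the original scale. There the weight is the fixed function $k\phimod$, and one works with $L^2$ operator norms only: $\|\Qk\|_{L^2\to L^2}=O(k^{-d_1})$ follows from $\sup|1-V_k|=O(k^{-d_1})$, as in Proposition~\ref{prop:stable_gap}. Pointwise values at $0$ are then recovered by the elliptic argument of Theorem~\ref{Thm_Loc_Off}. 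Alternatively, one can use the extremal characterization
\[
B(0,0)=\sup\{|u(0)|^2:\|u\|=1,\ u \text{ holomorphic}\},
\]
which gives two-sided bounds $\tBk(0,0)/\Bmodk(0,0)=1+O(k^{-d_1})$ directly from the comparability of norms $\|\cdot\|_{\tphik,\tomegak}=(1+O(k^{-d_1}))\|\cdot\|_{\phimodk,\omega_0}$. This holds once the extremal function concentrates in $|z|\le k^{\epsilon_0}$, and that concentration comes from the off-diagonal decay. I would take this extremal route as the primary one.

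\emph{Step 3: apply Kamimoto.} Undo the scaling: $k^{2|\Lambda|}\Bmodk(0,0)$ equals the Fock kernel at $0$ of the weight $k\phimod$ with respect to $\omega_0$. Up to the constant factor $\det\omega_0$ (absorbed into $b_{0,\ell}$), this is $B_{\tau,F}(0,0)$ with $F=\phimod$ and $\tau=k$. The hypotheses of Kamimoto's theorem need to be checked:
\begin{itemize}
\item $F$ is plurisubharmonic and torus-invariant by the form \eqref{good_model};
\item $F$ is of finite type, since $\phimod>0$ off $0$ forces the diagonal terms $|z_j|^{2m_j}$;
\item $F\ge c|z|^{\epsilon'}$ at infinity, by the same diagonal terms.
\end{itemize}
At $z=0$ only the constants $b_{j,\ell}(0)$ survive. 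Setting $b_k\sim\sum_\ell b_{0,\ell}(\log k)^{-\ell}$ collects the $j=0$ terms. All terms with $j\ge1$ are bounded by $O\bigl(k^{-1/q_F}(\log k)^{n-m_F}\bigr)$, since the $\log$-exponent starts at $(m_F-n)j$. Combining with Step 2, the error $(1+O(k^{-d_1}))$ multiplies the main term, which gives the stated formula. The leading asymptotics follow from $b_{0,0}>0$.
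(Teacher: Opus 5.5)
Your proposal is correct and shares the paper's architecture. It uses localization via Theorem~\ref{Thm_Loc_Off}, the comparison $\tBk(0,0)=\Bmodk(0,0)\bigl(1+O(k^{-d_1})\bigr)$, undoing the scaling to reach $B_{k\phimod,\omega_0}(0,0)$, and Kamimoto's expansion at $\tau=k$ with the $j=0$ terms forming $b_k$. The real difference is in the comparison step. The paper views $\tBk$ as the Fock kernel of $\psi_k=\tphik-\frac12\log(dV_{\tomegak}/dV_{\omega_0})$ with respect to $dV_{\omega_0}$. It joins $\phimodk$ to $\psi_k$ by the linear path $\Psi_t$ and integrates the variational formula for $\partial_t\log B_t(0,0)$ against the Berezin probability measures. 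That argument uses differentiability in $t$, positivity of $B_t(0,0)$ along the path, and absolute convergence of the Berezin integrals, which the paper takes from the off-diagonal estimates. Your extremal characterization instead gives at once $e^{-2\eta_k}\Bmodk(0,0)\le\tBk(0,0)\le e^{2\eta_k}\Bmodk(0,0)$, with $\eta_k:=\sup_{\Cs^n}|\psi_k-\phimodk|$. This works because both spaces consist of the same holomorphic functions and the density ratio $e^{-2(\psi_k-\phimodk)}$ is identically $1$ for $|z|\ge k^{\epsilon_0}$. So your route is more elementary. Your caveat that the extremal function must concentrate in $|z|\le k^{\epsilon_0}$ is unnecessary. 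Appealing to off-diagonal decay there would only reintroduce the $k$-dependent losses you rightly wanted to avoid, and the same is true of the elliptic recovery step in your discarded Neumann-series route. What the variational formula buys is an exact integral representation of $\log\bigl(\tBk(0,0)/\Bmodk(0,0)\bigr)$, which could be sharpened given uniform control of the Berezin measures. The paper, however, uses it only through the $L^\infty$--$L^1$ bound, so both arguments rest on the same input \eqref{connn}. (Strictly speaking, the derivation preceding \eqref{eq:conn} only yields $\eta_k=O(k^{-d_1+\delta})$, with $\delta>0$ small when $\epsilon_0$ is small, because the correction polynomials grow on $|z|\le k^{\epsilon_0}$. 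This affects both versions equally and does not affect the leading asymptotics.)
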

\begin{remark}
From the proof below, one can see that if the germs of the metric $\vphi$ and the Hermitian $(1,1)$-form $\omega$ are identical to the model metric $\phimod$ and $\omega_0$ near $x_0$, the full expansion from the result of Kamimoto \eqref{asy} can be fully obtained rather than just a leading-term estimate.
\end{remark}
\begin{proof}
 It is straightforward to check that $0\in\partial\Omega_{\phimod}$ is of D'Angelo finite-type since $\phimod$ is polynomial and $\phimod>0$ for $z\neq 0$. One can directly apply Kamimoto's result to obtain the pointwise full expansion of model Bergman functions at the origin:
\begin{equation*}
    B_{k\phimod,\omega_0}(0,0)\sim\dfrac{k^{2/d_{\phimod}}}{(\log k)^{m_{\phimod}-1}}\sum_{j=0}^\infty\sum_{\ell=(m_{\phimod}-n)j}^{\infty}b_{j,k}(z)k^{-j/m}(\log k)^{-\ell}\quad\textit{as}\,\,k\to\infty.
\end{equation*}  From  Proposition \ref{prop_goodmodel},  we see that the origin satisfies the model gap condition. By the scaling identity \[
k\phimod(\Sk(z))=\sum_{|(\alpha,\beta)|_{\Lambda}\leq 1}k^{1-|(\alpha,\beta)|_{\Lambda}}a_{\alpha,\beta}z^\alpha\bar z^\beta=\phimodk.
\]We can apply a change of variables to obtain 
\begin{equation*}
     k^{-2|\Lambda|}B_{k\vphi_0,\omega_0}(\Sk (z),\Sk(w))={B}_{\phimodk,\omega_0}(z,w).
\end{equation*}This implies the pointwise asymptotic:
\begin{equation}\label{asy}
    B_{\phimodk,\omega_0}(0,0)\sim\dfrac{k^{2/d_{\phimod}-2|\Lambda|}}{(\log k)^{m_{\phimod}-1}}\sum_{j=0}^\infty\sum_{\ell=(m_{\phimod}-n)j}^{\infty}b_{j,\ell}k^{-j/m}(\log k)^{-\ell}\quad\textit{as}\,\,k\to\infty.
\end{equation}
Recall $\tBk$ is the Fock Bergman kernel with respect to the weight $e^{-2\tphik}dV_{\tomegak}$. By setting \[
 \psi_k:=\tphik-\frac{1}{2}\log \dfrac{dV_{\tomegak}}{dV_{\omega_0}},
 \]we can think of  $\widetilde{B}_{k}$ as the weighted Fock Bergman kernel with respect to the weight $e^{-2\psi_k}dV_{\omega_0}$. By the convergence of the metrics \eqref{eq:conn}, there is $C>0$ such that for all $k\in\N$ ,\begin{equation}\label{connn}
     \lvert\phimodk(z)-\psi_k(z)\rvert\leq C k^{-d_1},\quad z\in\Cs^n.
 \end{equation}
Next, we aim to show that the perturbation of the weight from $\phimodk$ to $\psi_k$ will not affect the leading behavior of the asymptotic \eqref{asy}.  For any fixed $k\in\N$, define \begin{equation}\label{phit}
\Psi_{t}=\phimodk+t(\psi_k-\phimodk),    
\end{equation}
which is a path form $\phimodk$ to $\psi_k$. Let $B_{t}(z,w)$  denote temporarily the reproducing kernel of the Bergman space $\{f\in\mathcal{O};\int f e^{-2\Psi_{t}}dV_{\omega_0}<\infty\}$, where $\mathcal{O}$ denotes the set of holomorphic functions. We use the elementary variational formula for weighted Bergman kernels, which is basically standard; see, for instance, \mbox{\cite[Proposition~2.2]{berndtsson2017comparison}}. 
\begin{lemma}[Variational formula]
     \begin{equation*}
         \partial_tB_t(z,w)=\int_{\Cs^n}(\partial_t\Psi_t)(\xi)\cdot B_t(z,\xi)B_t(\xi,w)e^{-2\Psi_t(\xi)}dV_{\omega_0}(\xi)
     \end{equation*}
 \end{lemma}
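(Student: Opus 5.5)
We need to prove the variational formula for the weighted Bergman kernel along the path $\Psi_t=\phimodk+t(\psi_k-\phimodk)$, with $k$ fixed and $t\in[0,1]$. The plan is to differentiate the reproducing property. By \eqref{connn}, $\partial_t\Psi_t=\psi_k-\phimodk$ is bounded on $\Cs^n$ (indeed compactly supported, since $\psi_k=\phimodk$ for $|z|\geq k^{\epsilon_0}$). Hence all the Hilbert spaces $A_t:=\{f\in\mathcal O;\int|f|^2e^{-2\Psi_t}dV_{\omega_0}<\infty\}$ coincide as sets, and their norms are uniformly equivalent. Write $(f|g)_t$ for the inner product with weight $e^{-2\Psi_t}dV_{\omega_0}$.

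First we show that $t\mapsto B_t(z,w)$ is differentiable. The cleanest route is operator-theoretic. All $A_t$ equal a fixed space $A$, and on $A$ we have $(f|g)_t=(G_tf|g)_0$ for the bounded positive operator $G_t:=P_0M_{e^{-2t\Delta}}P_0$ on $A$, where $\Delta:=\psi_k-\phimodk$ and $P_0$ is the Bergman projection for $t=0$. Then $G_t$ is real-analytic in $t$ in operator norm, with $\partial_tG_t=P_0M_{-2\Delta e^{-2t\Delta}}P_0$. The reproducing kernel satisfies $B_t(\cdot,w)=G_t^{-1}B_0(\cdot,w)$. Pointwise evaluation is bounded on $A$, locally uniformly in $z$, so $B_t(z,w)$ is $\Cinf$ in $t$. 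We also get $\partial_tB_t(\cdot,w)=-G_t^{-1}(\partial_tG_t)G_t^{-1}B_0(\cdot,w)$.

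Alternatively, and more transparently, differentiate the identity
\[
f(z)=\int f(\xi)\overline{B_t(\xi,z)}e^{-2\Psi_t(\xi)}dV_{\omega_0}(\xi),\qquad f\in A.
\]
Differentiation under the integral is justified by dominated convergence, since $\Delta$ is bounded and $B_t(\cdot,z),\partial_tB_t(\cdot,z)\in A$ with locally uniform norms. This gives
\[
0=\big(f\,\big|\,\partial_tB_t(\cdot,z)\big)_t-2\big(f\,\big|\,\Delta\,B_t(\cdot,z)\big)_t .
\]
So $\partial_tB_t(\cdot,z)\in A$ is the orthogonal projection in $(\cdot|\cdot)_t$ of $2\Delta B_t(\cdot,z)$ onto $A$. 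Evaluating via the reproducing property at the point $w$ gives
\[
\partial_tB_t(w,z)=2\int\Delta(\xi)B_t(w,\xi)B_t(\xi,z)e^{-2\Psi_t(\xi)}dV_{\omega_0}(\xi),
\]
using $\overline{B_t(\xi,w)}=B_t(w,\xi)$. This is the stated formula up to the factor arising from the convention $e^{-2\Psi}$ versus $e^{-\Psi}$. I would recheck the normalization: with weight $e^{-2\Psi_t}$ the derivative of the measure is $-2\partial_t\Psi_t$, so the formula as stated corresponds to absorbing the $2$. Either way the constant is harmless for the subsequent estimate.

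The main obstacle is justifying the differentiation, namely that $\partial_tB_t(\cdot,z)$ exists in $A$. This is handled by the uniform norm equivalence together with the analytic dependence of $G_t$. Because the variation $\Delta$ is bounded, in fact $O(k^{-d_1})$ by \eqref{connn}, no growth issues arise.
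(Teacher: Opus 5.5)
Your argument is correct and is the standard proof behind the reference the paper cites in place of a proof (Berndtsson, Prop.~2.2). Differentiating the reproducing identity shows that $\partial_tB_t(\cdot,z)-2P_t\bigl(\partial_t\Psi_t\,B_t(\cdot,z)\bigr)$, with $P_t$ the Bergman projection for $e^{-2\Psi_t}dV_{\omega_0}$, lies in the Bergman space and is orthogonal to all of it, hence vanishes. Your Toeplitz description $G_t=P_0M_{e^{-2t\Delta}}P_0$, $B_t(\cdot,w)=G_t^{-1}B_0(\cdot,w)$, properly justifies the differentiability this step needs.

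On the constant, you should state the conclusion outright rather than hedge. With the weight $e^{-2\Psi_t}$ the factor $2$ is genuinely there: for $\Psi_t=\Psi_0+ct$ one has $B_t=e^{2ct}B_0$, so $\partial_tB_t=2cB_t$, whereas the displayed formula gives only $cB_t$. So the statement as printed is off by exactly a factor $2$; it is the formula for the convention $e^{-\phi}$, not $e^{-2\phi}$. Nothing is being "absorbed". The error is harmless for the paper, since it only doubles the constant in the subsequent bound $\bigl|\log\bigl(\widetilde{B}_k(0,0)/B_{\phimodk,\omega_0}(0,0)\bigr)\bigr|\leq Ck^{-d_1}$.
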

By Lemma above and the setting \eqref{phit} , we have \[
\partial_tB_t(0,0)=\int_{\Cs^n}(\psi_k-\phimodk)(\xi)\cdot \lvert B_t(\xi,0)\rvert ^2 e^{-2\Psi_t(\xi)}dV_{\omega_0}(\xi).\]Hence, 
\begin{equation}\label{f}
\partial_t(\log B_t(0,0))=\int_{\Cn}(\psi_t-\psi_0)\dfrac{|B_t(\xi,0)|^2}{B_t(0,0)}e^{-2\Psi_t(\xi)}dV_{\omega_0}(\xi).\end{equation}
Note that $\frac{|B_t(\cdot,0)|^2}{B_t(0,0)}e^{-2\Psi_t(\xi)}dV_{\omega_0}$ is the Berezin measure, which plays an essential role in the study of the Berezin transform (see, e.g., \cite{ioos2020spectral}). By the reproducing property, we have \begin{equation*}
    \int_{\Cn}\frac{|B_t(\cdot,0)|^2}{B_t(0,0)}e^{-2\Psi_t(\xi)}dV=1,
\end{equation*}where $B_t(0,0)>0$ and the integral is absolutely convergent by the off-diagonal estimate established in Section \ref{sec_3} or Section \ref{sec_4}. By integration of \eqref{f} with respect to $t$, we have \begin{equation*}
    \log\dfrac{\widetilde{B}_k(0,0)}{B_{\phimodk,\omega_0}(0,0)}=\int_{0}^1\int_{\Cs^n}(\psi_k-\phimodk)(\xi)\cdot  \frac{|B_t(\xi,0)|^2}{B_t(0,0)} e^{-2\Psi_t(\xi)}dV(\xi)dt.
\end{equation*} By Hölder's inequalities and convergence of metrics \eqref{connn},
\begin{equation*}
 \Big\lvert \log\dfrac{\widetilde{B}_k(0,0)}{B_{\phimodk,\omega_0}(0,0)}\Big\rvert\leq \Big\|\psi_k-\phimodk\Big\|_{L^\infty([0,1]\times\Cs^n)}\Big\|\dfrac{|B_t(\xi,0)|^2 e^{-2\Psi_t(\xi)}}{B_{t}(0,0)}\Big\|_{L^1([0,1]\times\Cs^n)}\leq Ck^{-d_1},
\end{equation*}which implies that 
 \begin{equation*}
  \widetilde{B}_k(0,0)=B_{\phimodk,\omega_0}(0,0)(1+O(k^{-d_1})).   
 \end{equation*}Combining this estimate with the expansion \eqref{asy}, we complete the proof of Theorem \ref{Thm_non}.
\end{proof}